\setlist[enumerate]{itemsep=0mm}
\theoremstyle{plain}
\declaretheorem[title=Theorem, parent=section]{theorem}
\declaretheorem[title=Lemma,sibling=theorem]{lemma}
\declaretheorem[title=Proposition,sibling=theorem]{proposition}
\declaretheorem[title=Corollary,sibling=theorem]{corollary}
\theoremstyle{definition}
\declaretheorem[title=Definition,sibling=theorem]{definition}
\declaretheorem[title=Remark,sibling=theorem]{remark}
\declaretheorem[title=Remark, numbered=no]{remark*}
\declaretheorem[title=Assumption, numbered=no]{assumption*}
\numberwithin{equation}{section}
\newcommand{\N}{\mathds{N}}
\newcommand{\R}{\mathds{R}}
\newcommand{\Z}{\mathds{Z}}
\def\hmath$#1${\texorpdfstring{{\rmfamily\textit{#1}}}{#1}}
\newcommand{\cB}{\mathcal{B}}
\newcommand{\cW}{\mathcal{W}}
\newcommand{\cH}{\mathcal{H}}
\newcommand{\cC}{\mathcal{C}}
\newcommand{\cQ}{\mathcal{Q}}
\newcommand{\cD}{\mathcal{D}}
\newcommand{\cT}{\mathcal{T}}
\newcommand{\eps}{\varepsilon}
\newcommand{\BIGOP}[1]
{
	\mathop{\mathchoice%
		{\raise-0.22em\hbox{\huge $#1$}}%
		{\raise-0.05em\hbox{\Large $#1$}}{\hbox{\large $#1$}}{#1}}}
\def\XXint#1#2#3{{\setbox0=\hbox{$#1{#2#3}{\int}$}
		\vcenter{\hbox{$#2#3$}}\kern-.5\wd0}}
\newcommand{\BIGboxplus}{\mathop{\mathchoice%
		{\raise-0.35em\hbox{\huge $\boxplus$}}%
		{\raise-0.15em\hbox{\Large $\boxplus$}}{\hbox{\large $\boxplus$}}{\boxplus}}}
\DeclareMathOperator{\dist}{dist}
\DeclareMathOperator{\diam}{diam}
\DeclareMathOperator{\supp}{supp}
\renewcommand{\d}{\textnormal{d}}
\newcommand{\1}{{\mathbbm{1}}}
\newcommand{\norm}[1]{\left\lVert#1\right\rVert} 
\newcommand{\abs}[1]{\ensuremath{\left\vert#1\right\vert}} 
\newcommand{\bigabs}[1]{\ensuremath{\big\vert#1\big\vert}} 
\DeclareMathOperator*{\ext}{ext} 
\DeclareMathOperator*{\extns}{Ext_s} 
\DeclareMathOperator*{\trns}{Tr_s} 
\newcommand{\inr}[1]{\text{inr}(#1)} 
\newcommand{\ie}{i.e.\,}
\newcommand{\eg}{e.g.\,}
\newcommand{\distw}[2]{\dist(#1, #2)} 
\providecommand\@dotsep{5}
\def\listtodoname{List of Todos}
\def\listoftodos{\@starttoc{tdo}\listtodoname}
\newcommand{\vs}[2]{V^{#1, #2}}
\newcommand{\VspOm}{V^{s,p}(\Omega\,|\, \R^d)}
\newcommand{\VspOmNull}{V^{s,p}_0(\Omega\,|\, \R^d)}
\newcommand{\VsOneOm}{V^{s,1}(\Omega\,|\, \R^d)}
\begin{document}
\allowdisplaybreaks
\title{Robust nonlocal trace and extension theorems}

\author{Florian Grube}
\author{Moritz Kassmann}

\address{Fakultät für Mathematik, Universität Bielefeld, Postfach 10 01 31, 33501 Bielefeld, Germany}
\email{fgrube@math.uni-bielefeld.de}

\address{Fakult\"{a}t f\"{u}r Mathematik\\Universit\"{a}t Bielefeld\\Postfach 100131\\D-33501 Bielefeld}
\email{moritz.kassmann@uni-bielefeld.de}
\urladdr{https://www.uni-bielefeld.de/math/kassmann}

\makeatletter
\@namedef{subjclassname@2020}{%
	\textup{2020} Mathematics Subject Classification}
\makeatother

\subjclass[2020]{46E35, 47G20, 35J25, 45G05, 35J60, 35A15}
\thanks{Financial support by the German Research Foundation (GRK 2235
	- 282638148, SFB 1283 - 317210226) is gratefully acknowledged.}
\keywords{Nonlocal Sobolev spaces, trace, extension, convergence of trace spaces}

\begin{abstract} 
We prove trace and extension results for Sobolev-type function spaces that are well suited for nonlocal Dirichlet and Neumann problems including those for the fractional $p$-Laplacian. Our results are robust with respect to the order of differentiability. In this sense they are in align with the classical trace and extension theorems. 
\end{abstract}

\maketitle


\section{Introduction}\label{sec:intro}

The article is concerned with well-posedness of nonlinear nonlocal equations in bounded domains, such as 
\begin{align}
\begin{split} \label{eq:p-Lap-eq}
(-\Delta)^s_p u &= f \qquad \text{ in } \Omega \,, \\
u &= g \qquad \text{ in } \R^d \setminus \Omega \,,
\end{split}
\end{align}
where the fractional $p$-Laplace is defined via
\begin{align*}
(-\Delta)^s_p u (x) &= (1-s) \operatorname{pv.} \int\limits_{\R^d} |u(x)-u(y)|^{p-2} \big(u(x) - u(y) \big) \frac{\d y}{|x-y|^{d+sp}} \,.
\end{align*}

A standard approach to problems like \eqref{eq:p-Lap-eq} is the variational approach, which is based on an energy functional and corresponding function spaces. Since the operator $(-\Delta)^s_p u$ is nonlocal, it is necessary to prescribe values $u(x)$ for $x \in \R^d \setminus \Omega$ in order for \eqref{eq:p-Lap-eq} to be well-posed. A possible yet restrictive option is to work in the Sobolev-Slobodeckij space $W^{s,p}(\R^d)$. Note that an assumption of the type $g \in W^{s,p}(\R^d)$ imposes unnatural restrictions since the problem \eqref{eq:p-Lap-eq} does not involve any regularity of $g$ in $\R^d \setminus \overline{\Omega}$ other than some weighted integrability. Popular workarounds include assumptions of the type $g \in W^{s,p}(\Omega_\varepsilon) \cap L^{p}(\R^d;(1+|x|)^{-d-sp} \d x)$ for some enlarged domain $\Omega_\varepsilon=\{ x\in \R^d\,|\, \distw{x}{\overline{\Omega}}<\eps \}$. 

\medskip

In this work we introduce and study trace spaces on $\R^d \setminus \Omega$ that allow for a natural variational approach to nonlocal nonlinear problems. An important feature of our approach is the robustness of our results as $s \to 1^{-}$. This allows for a theory of well-posedness for problems like \eqref{eq:p-Lap-eq} that is continuous in the parameter $s$ at $s=1$. In this case, problem \eqref{eq:p-Lap-eq} reduces to 
\begin{align*}
- \operatorname{div} (|\nabla u|^{p-2} \nabla u) = f \text{ in } \Omega, \qquad u=g \text{ on } \partial \Omega.
\end{align*}

In order to derive the setting of the variational approach, let us explain the definition of a weak solution to our model example \eqref{eq:p-Lap-eq}. Given a sufficiently regular solution $u$ to \eqref{eq:p-Lap-eq} and a regular test function $\varphi : \R^d \to \R$ with compact support in $\Omega$, the following should hold true
\begin{align*}
\int\limits_{\Omega} (-\Delta)^s_p u \; \varphi = \int\limits_{\Omega} f \, \varphi \,,
\end{align*}
which, after an application of Fubini's theorem, reads
\begin{align}\label{eq:tested-eq}
\frac{1-s}{2} \iint\limits_{(\Omega^c\times \Omega^c)^c}  |u(x)-u(y)|^{p-2} \big(u(x) - u(y) \big) \big(\varphi(x) - \varphi(y) \big) \frac{\d y \,\d x}{|x-y|^{d+sp}} = \int\limits_{\Omega} f \varphi \,.
\end{align}
This line motivates the following definition of an energy space. For a bounded open set $\Omega\subset \R^d$ and $1\le p< \infty$ we consider the fractional Sobolev-type space 
\begin{align}
	\VspOm&:= \{ u:\R^d \to \R \text{ measurable }\,|\, [u]_{\VspOm} <\infty   \}, \label{eq:V-def} \\
	[u]_{\vs{s}{p}(A\,|\,B)}^p &:= (1-s) \iint\limits_{A\, B} \frac{\abs{u(x)-u(y)}^p}{\abs{x-y}^{d+s\,p}} \d x \d y \quad (A,B\in \cB( \R^d) ). \label{eq:V-seminorm-def} 
\end{align}
We endow this space with the norm $\norm{u}_{\VspOm}^p := \norm{u}_{L^p(\Omega)}^p + [u]_{\VspOm}^p$. The space $\VspOm$ is a separable Banach space and reflexive for $p>1$, see e.g. \cite[Chapter 3.4]{Fog20}. It is well known that this space converges to $W^{1,p}(\Omega)$ for $1<p<\infty$ as $s\to 1^{-}$, see \cite[Theorem 2]{BBM01}, \cite[Theorem 1.1, Theorem 1.3, Theorem 1.5]{Foghem_2023}. In his famous work \cite{Gag57} Gagliardo proved that the classical trace $\gamma: W^{1,p}(\Omega)\to W^{1-1/p, p}(\partial \Omega)$ is linear and continuous and has a continuous right inverse. We are concerned with the search for a trace theorem and extension result for the fractional Sobolev spaces-type $\VspOm$ onto the nonlocal boundary $\Omega^c$ such that the result is robust in the limit $s\to 1^{-}$. 

\begin{remark}
In some more applied fields such as peridynamics, one studies nonlocal problems in bounded open sets $\Omega$, where data are prescribed in a bounded open set $E \supset \Omega$, see \cite{MeDu16}. Then, there is no need to discuss the decay at infinity but the main challenge remains: quantify local behavior of functions across the boundary $\partial \Omega$. Our results apply to such problems directly as $\R^d$ can be replaced by a general set $E$.
\end{remark}

\medskip

\subsection*{Main results}
We introduce a space of functions $\cT^{s,p}(\Omega^c)$ defined on $\Omega^c$, see \eqref{eq:Tsp_definition}, and prove trace and extension results which are robust in the parameter $s$, see \autoref{th:trace-extension}, \autoref{th:trace-p-1}. Lastly, we prove the asymptotic of the spaces $\cT^{s,p}(\Omega^c)$ as well as some related weighted $L^p$ spaces as $s\to 1^-$, see \autoref{th:asym_trace}.

\medskip 

Due to the nonlocality of the operators under consideration, problems like \eqref{eq:p-Lap-eq} can be formulated in open sets, which are not necessarily connected. Since our main results do not require $\Omega$ to be connected, we define $\Omega \subset \R^d$ to be a Lipschitz domain, if it is open and has a uniform Lipschitz boundary, see \autoref{sec:prelims}. We define measures \begin{equation}\label{eq:def_mu_s}
	\mu_s(\d x):= \1_{\Omega^c}(x) (1-s)d_x^{-s}(1+d_x)^{-d-s(p-1)}\d x
\end{equation} 
on the Borel $\sigma$-algebra $\cB(\R^d)$, $s\in (0,1)$, $1\le p<\infty$ where $d_x:= \distw{x}{\partial\Omega}$ for $x\in \R^d$. We simply write $\mu_s(x)$ for the density of the measure $\mu_s$ with respect to the Lebesgue measure on $\R^d$. Given an open bounded set $A \subset \R^d$, note that $\mu_s(A) \asymp (1-s) \int_{A \cap \Omega^c} d_x^{-s} \d x$ and $\mu_s$ converges weakly for $s \to 1^-$ to the Hausdorff measure on $\partial \Omega \cap A$, see \autoref{lem:weak_convergence_measures}.

\medskip

We introduce for $s\in (0,1), 1\le p < \infty$ our trace spaces
\begin{align}\label{eq:Tsp_definition}
	\begin{split}
	\cT^{s,p}(\Omega^c)&:= \{ g:\Omega^c \to \R \text{ measurable } \,|\, \norm{g}_{\cT^{s,p}(\Omega^c)}<\infty \},\\
	\norm{g}_{\cT^{s,p}(\Omega^c)}^p&:= \norm{g}_{L^{p}(\Omega^c;\,\mu_s)}^p+	[g]_{\cT^{s,p}(\Omega^c)}^p, \\
	[f,g]_{\cT^{s,p}(\Omega^c)}^p &:= \iint\limits_{\Omega^c \, \Omega^c} \frac{\abs{f(x)-f(y)}^{p-2} (f(x)-f(y)) (g(x)-g(y)) }{  ((\abs{x-y}+d_x+d_y)\wedge 1)^{d+s(p-2)}} \mu_s(\d x) \mu_s(\d y)
	\end{split}
\end{align}
The space $\cT^{s,p}(\Omega^c)$ is a separable Banach space (Hilbert for $p=2$) and reflexive for $p>1$, see \autoref{lem:banach}. Now we state the trace result and extension result for $p>1$.
\begin{theorem}\label{th:trace-extension}
	Let $\Omega\subset \R^d$ be a bounded Lipschitz domain, $s\in(0,1)$, $1< p<\infty$. Then the trace operator
	\begin{align*}
		\trns: \VspOm &\to \cT^{s,p}(\Omega^c)\\
		u&\mapsto u|_{\Omega^c}
	\end{align*}
	is continuous and linear and there exists a continuous linear right inverse
	\begin{align*}
		\extns: \cT^{s,p}(\Omega^c) &\to   \VspOm\\
		g&\mapsto \extns(g)
	\end{align*}
	which we call the nonlocal extension operator. Moreover, the continuity constants of the linear trace and extension operator only depend on $\Omega$, a lower bound on $s$ as well as a lower and upper bound on $p$. 
\end{theorem}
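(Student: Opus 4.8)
The plan is to prove the two assertions — continuity of $\trns$ and existence of a continuous right inverse $\extns$ — separately, since they require genuinely different techniques. Throughout, one should keep careful track of the dependence of constants on $s$ (only through a lower bound) and $p$ (through a lower and upper bound), which is the ``robustness'' requirement; all constants coming from the Lipschitz charts of $\partial\Omega$ are $s$-independent by construction of a uniform Lipschitz domain.

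\textbf{Step 1: Continuity of the trace.} Given $u \in \VspOm$, we must bound $\|u|_{\Omega^c}\|_{L^p(\Omega^c;\mu_s)}^p$ and $[u|_{\Omega^c}]_{\cT^{s,p}(\Omega^c)}^p$ by $C\,\|u\|_{\VspOm}^p$. The key localization is that the weight $\mu_s$ concentrates near $\partial\Omega$ (recall $\mu_s(A)\asymp (1-s)\int_{A\cap\Omega^c} d_x^{-s}\,\d x$ and $\mu_s \rightharpoonup \mathcal H^{d-1}\!\restriction_{\partial\Omega}$), so only the behavior of $u$ in a one-sided neighborhood of $\partial\Omega$ inside $\Omega$ matters. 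After a partition of unity and flattening the boundary with a bi-Lipschitz chart, it suffices to treat the half-space model: estimate $\int_{\{x_d<0\}} |u(x)|^p |x_d|^{-s}(1+|x_d|)^{-d-s(p-1)}(1-s)\,\d x$ and the corresponding double integral defining $[\cdot]_{\cT^{s,p}}$ by the Gagliardo seminorm $[u]_{\vs{s}{p}(\{x_d<0\}\cup\{x_d>0\})}^p$. For the $L^p$-part this is a one-dimensional Hardy-type inequality in the normal variable: $|u(x',-t)|$ is controlled by averaging $u$ over a cube of side $\sim t$ on the other side of the hyperplane plus the Gagliardo energy between that cube and the point, and the factor $d_x^{-s}$ is exactly integrable against $(1-s)$ near $t=0$. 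For the seminorm part, the denominator $((|x-y|+d_x+d_y)\wedge 1)^{d+s(p-2)}$ is designed so that, after changing variables to reflected points and using the triangle inequality $|u(x)-u(y)| \le |u(x)-u(\bar y)| + \dots$, each term matches a piece of the ambient Gagliardo double integral; one integrates out the ``transversal'' variables and the $(1-s)$ prefactors absorb the singular $d_x^{-s}d_y^{-s}$ weights uniformly in $s$. This step is essentially a robust, weighted version of the classical Gagliardo trace estimate and the book-keeping of the cutoff $\wedge 1$ (which handles the regime $|x-y| \gtrsim 1$, i.e.\ the decay at infinity, separately via the $L^p(\mu_s)$ term) is the fiddly part but not the conceptual obstacle.

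\textbf{Step 2: Construction of the extension.} Here we build $\extns(g)$ explicitly. On $\Omega^c$ set $\extns(g)=g$. Inside $\Omega$, use a Whitney decomposition of $\Omega$ into dyadic cubes $\{Q_j\}$ with $\diam Q_j \asymp \dist(Q_j,\partial\Omega)$, a subordinate partition of unity $\{\psi_j\}$, and define $\extns(g) = \sum_j \psi_j\, g_{Q_j^*}$ where $g_{Q_j^*}$ is an average of $g$ over a suitable ``shadow'' set $Q_j^* \subset \Omega^c$ near $\partial\Omega$ of comparable size and distance (a nonlocal analogue of the reflected Whitney cube). Linearity is immediate. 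One then checks $\extns(g) \in \VspOm$: split the Gagliardo double integral over $\R^d\times\R^d$ into the region $\Omega^c\times\Omega^c$ (which is $\lesssim [g]_{L^p(\mu_s)}^p + [g]_{\cT^{s,p}}^p$ essentially by reversing Step 1's reflection estimates), the region $\Omega\times\Omega$ (handled by the standard Whitney-extension estimates: on each cube $\extns(g)$ is an average, differences of nearby averages are controlled by differences of $g$ over shadows, and the geometric series over scales converges with an $s$-robust constant because $1-s$ against the scale sum behaves well), and the mixed region $\Omega\times\Omega^c$ (the genuinely nonlocal term, where a point $y\in\Omega$ sees its Whitney scale and a point $x\in\Omega^c$ sees $d_x$; one estimates $|\extns(g)(y)-g(x)|$ by a telescoping chain from $y$'s shadow to $x$). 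The trace property $\trns\circ\extns = \mathrm{id}$ is built in since $\extns(g)=g$ on $\Omega^c$.

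\textbf{Main obstacle.} The principal difficulty is the \emph{robust} estimate of the mixed term $\iint_{\Omega\times\Omega^c}$ in Step 2 and, symmetrically, the reflection estimates in Step 1, where one must verify that the specific denominator $((|x-y|+d_x+d_y)\wedge 1)^{d+s(p-2)}$ in the definition of $[\cdot]_{\cT^{s,p}}$ is exactly the right weight so that all $(1-s)$ prefactors and all $d_x^{-s}$ singularities cancel to leave a constant depending only on $\Omega$, $p$, and a lower bound on $s$ — i.e.\ nothing degenerates as $s\to 1^-$. This amounts to a careful scale-by-scale analysis: fix a dyadic distance $2^{-k}$ to the boundary, sum the contributions at that scale, and observe that the $k$-sum is geometric with ratio bounded away from $1$ uniformly in $s$, the $(1-s)$ factor compensating the $\sim 2^{ks}$ growth from $d_x^{-s}$ exactly as in the one-dimensional identity $(1-s)\int_0^1 t^{-s}\,\d t = 1$. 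Once the half-space model case is done with explicit constants, transferring to a general bounded Lipschitz domain is routine via the uniform charts and a finite partition of unity.
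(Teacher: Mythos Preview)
Your overall architecture is right --- split into trace and extension, build the extension by Whitney cubes in $\Omega$ with averages of $g$ over exterior ``shadows'' --- and your identification of the mixed $\Omega\times\Omega^c$ contribution as the delicate piece is accurate. But there is a genuine gap in Step~1 for the seminorm, and a missing detail in Step~2.

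For the trace seminorm, your plan is to reflect $x\mapsto\bar x$ across the flattened boundary and telescope $|u(x)-u(y)|$. The cross terms $|u(x)-u(\bar x)|$ do indeed land in the ambient Gagliardo energy, but the \emph{middle} term $|u(\bar x)-u(\bar y)|$ produces a double integral over $\Omega\times\Omega$ with the weight
\[
\frac{(1-s)^2}{d_z^{\,s}\,d_w^{\,s}\,(|z-w|+d_z+d_w)^{d+s(p-2)}},
\]
and bounding this by $\|u\|_{W^{s,p}(\Omega)}^p$ with a constant that does not blow up as $s\to 1^-$ is \emph{not} the one-dimensional identity $(1-s)\int_0^1 t^{-s}\,\d t=1$; the two exterior weights and the modified denominator interact nontrivially. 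The paper isolates this as a separate ``approximate trace inequality'' (their Theorem~3.4) and proves it by real interpolation between Bessel potential spaces $H^{\alpha_0,p}$ and $H^{\alpha_1,p}$ in the style of Jonsson--Wallin, using pointwise estimates on $G_\alpha$ and its gradient. Your scale-by-scale sketch does not supply this ingredient. Relatedly, the paper never flattens the boundary for the main seminorm estimate; it works intrinsically with $d_x$, converts $\mu_s$-integrals over $\Omega^c$ into integrals over $\Omega$ via two elementary comparison lemmas (their Lemmas~3.6--3.7), and then combines the interpolation result with a reflected-Whitney-cube argument borrowed from Dyda--Kassmann.

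For the extension, the paper's key point (and the reason earlier constructions were not robust) is that the Whitney averages of $g$ must be taken with respect to the weighted measure $\widetilde\mu_s(\d z)=(1-s)d_z^{-s}\1_{\Omega^c}\,\d z$, not the Lebesgue measure; this is what forces the correct normalisation constants $a_{Q,s}\asymp l_Q^{\,s-d}$ and yields convergence to the classical Whitney extension of $W^{1-1/p,p}(\partial\Omega)$ as $s\to 1^-$. Your ``average over a shadow set'' leaves this unspecified, and with unweighted averages the constants would not be $s$-robust.
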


An extension of \autoref{th:trace-extension} to the case $p=1$ requires a refined consideration. Analogous to the case $p>1$, one might guess that the limit space of $V^{s,1}(\Omega\,|\, \R^d)$ as $s\to 1^{-}$ is $W^{1,1}(\Omega)$. But, in fact, the Sobolev space $W^{1,1}(\Omega)$ is too small to capture all functions such that $\liminf_{s\to 1^{-}}\norm{f}_{\VsOneOm}$ is finite. The limit space of $\VsOneOm$ as $s\to 1^{-}$ turns out to be the space of functions of bounded variation $BV(\Omega)$, see \cite[Theorem 1]{Dav02}, \cite[Theorem 3', Corollary 2, Corollary 5]{BBM01} and \cite[Theorem 1.3, Theorem 1.4']{Foghem_2023}. It is well known that functions in $BV(\Omega)$ have a trace to the boundary $\partial \Omega$ that is integrable and the trace map to $L^1(\partial \Omega)$ is surjective, see \cite{Gag57}, \cite[Theorem 1]{Dav02} or \cite[Theorem 18.13]{Leo17}. \autoref{th:trace-extension} may be extended to the case $p=1$ as follows.

\begin{theorem}\label{th:trace-p-1}
	Let $\Omega\subset \R^d$ be a bounded Lipschitz domain, $s\in(0,1)$. Then the trace operator
	\begin{align*}
	\trns: \vs{s}{1}(\Omega|\R^d) &\to L^1(\Omega^c;\,\mu_s(\d x)), \qquad u \mapsto u|_{\Omega^c}
	\end{align*}
	is continuous and linear. There exists a continuous linear right inverse
	\begin{align*}
	\extns: \cT^{s,1}(\Omega^c) &\to   \vs{s}{1}(\Omega|\R^d), \qquad g \mapsto \ext(g) \,.
	\end{align*}
	The continuity constants of the linear trace and extension operator only depend on $\Omega$ and a lower bound on $s$. In addition, the norm of the extension operator in dimension $d=1$ also depends on a lower bound on $1-s$.
\end{theorem}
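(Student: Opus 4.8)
The strategy is to follow the proof of \autoref{th:trace-extension} and to record the changes forced by $p=1$. As there, the uniform Lipschitz structure of $\partial\Omega$ permits a reduction, by a finite partition of unity subordinate to boundary charts and a bi-Lipschitz straightening of $\partial\Omega$ (see \autoref{sec:prelims}), to estimates localized near a single chart, so that we may take $\Omega$ bounded and, after rescaling, $\diam\Omega\asymp1$. We keep the weights $(1-s)$ of \eqref{eq:V-seminorm-def} and \eqref{eq:def_mu_s} explicit throughout, so that the asserted robustness as $s\to1^-$ stays transparent. Two features distinguish $p=1$: the trace is only claimed to land in the larger space $L^1(\Omega^c;\mu_s)$, mirroring the fact that traces of $BV$-functions lie merely in $L^1(\partial\Omega)$, and the convexity manipulations available for $p>1$ must be replaced by elementary ones; a genuinely new argument is needed only in dimension one.

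\textbf{Continuity of the trace.} Let $u\in\vs{s}{1}(\Omega|\R^d)$. For $x\in\Omega^c$ use the interior corkscrew property of $\Omega$ (a consequence of the Lipschitz regularity of $\partial\Omega$) to pick a ball $B_x\subset\Omega$ with $\radius(B_x)\asymp\dist(x,B_x)\asymp d_x\wedge1$, arranged so that $\int_{\{x\in\Omega^c:\,y\in B_x\}}|B_x|^{-1}\,\mu_s(\d x)$ is bounded uniformly in $y\in\Omega$ and in $s$. From $\abs{u(x)}\le\fint_{B_x}\abs{u(x)-u(y)}\,\d y+\fint_{B_x}\abs{u(y)}\,\d y$ one gets, integrating against $\mu_s$, that the second term contributes $\lesssim\norm{u}_{L^1(\Omega)}$ by Tonelli and the overlap bound, while in the first term $\abs{x-y}\asymp d_x$ for $y\in B_x$, and comparing the powers of $d_x$ carried by $\mu_s(\d x)$ and by $|B_x|^{-1}\,\d y$ shows $|B_x|^{-1}\1_{B_x}(y)\,\mu_s(\d x)\,\d y\lesssim(1-s)\abs{x-y}^{-d-s}\,\d x\,\d y$; since $B_x\subset\Omega$, this part is $\lesssim(1-s)\iint_{\Omega^c\times\Omega}\abs{u(x)-u(y)}\abs{x-y}^{-d-s}\,\d y\,\d x\le[u]_{\vs{s}{1}(\Omega|\R^d)}^1$. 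Linearity of $\trns$ is evident, and it is well defined between the stated spaces because $\mu_s$ is absolutely continuous with respect to Lebesgue measure on $\Omega^c$.

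\textbf{Construction and continuity of the extension.} Fix a Whitney decomposition $\{Q_i\}$ of $\Omega$ with a subordinate partition of unity $\{\varphi_i\}$, $0\le\varphi_i\le1$, $\sum_i\varphi_i=\1_\Omega$, $\abs{\nabla\varphi_i}\lesssim\ell(Q_i)^{-1}$. To each $Q_i$ with $\ell(Q_i)\lesssim1$ associate, by the exterior corkscrew property, a measurable set $S_i\subset\Omega^c$ with $\diam S_i\asymp\dist(S_i,Q_i)\asymp\ell(Q_i)$ and $\abs{S_i}\asymp\ell(Q_i)^d$, and to the finitely many remaining cubes a common remote set; put $c_i:=\fint_{S_i}g$ and define $\extns(g):=g$ on $\Omega^c$ and $\extns(g):=\sum_ic_i\varphi_i$ on $\Omega$. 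This map is linear, reproduces constants (so $\extns(g)-c=\extns(g-c)$ for $c\in\R$), and satisfies $\trns\circ\extns=\Id$ on $\cT^{s,1}(\Omega^c)$ by construction. To bound $\norm{\extns(g)}_{\vs{s}{1}(\Omega|\R^d)}$ we split $(\Omega^c\times\Omega^c)^c=(\Omega\times\Omega)\cup(\Omega\times\Omega^c)\cup(\Omega^c\times\Omega)$. On $\Omega\times\Omega$ the finite overlap of the $Q_i$ gives, for $x\in Q_i$, $y\in Q_j$, that $\abs{\extns(g)(x)-\extns(g)(y)}\lesssim\sum\abs{c_k-c_{k'}}$ over the pairs of cubes occurring in a short Whitney chain joining $Q_i$ to $Q_j$, and $\abs{c_k-c_{k'}}\le\fint_{S_k}\fint_{S_{k'}}\abs{g(z)-g(w)}\,\d z\,\d w$; for $z\in S_k$, $w\in S_{k'}$ with $Q_k,Q_{k'}$ of comparable side length one has $d_z\asymp d_w\asymp(\abs{z-w}+d_z+d_w)\asymp\ell(Q_k)$, so each such term is comparable to the corresponding integrand of $[g]_{\cT^{s,1}(\Omega^c)}$, and summing over Whitney pairs (with bounded overlap of the $S_k$) controls this piece by $[g]_{\cT^{s,1}(\Omega^c)}$. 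The mixed pieces are treated by writing, for $x\in Q_i$, $y\in\Omega^c$, $\abs{\extns(g)(x)-g(y)}\le\abs{\extns(g)(x)-c_i}+\fint_{S_i}\abs{g(z)-g(y)}\,\d z$, bounding the first summand as above and the second by noting $\abs{x-y}\gtrsim\ell(Q_i)$ and $\abs{z-y}\le\abs{x-y}+C\ell(Q_i)$, which matches the $\cT^{s,1}$-kernel against $\abs{x-y}^{-d-s}$. Finally, $\norm{\extns(g)}_{L^1(\Omega)}$ is controlled by the same Whitney-chain mechanism — handling the values $c_i$ for cubes near $\partial\Omega$ through the singularity of the $\cT^{s,1}$-kernel and those of the finitely many remaining cubes by $\norm{g}_{L^1(\Omega^c;\mu_s)}$, after subtracting a suitable global constant.

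\textbf{The anomaly in $d=1$ and the main difficulty.} The $\cT^{s,p}$-kernel in \eqref{eq:Tsp_definition} has exponent $d+s(p-2)=d-s$ when $p=1$: for $d\ge2$ this is a genuine singularity, uniformly as $s\to1^-$, but for $d=1$ it degenerates to the exponent $1-s\to0$, so that in the limit $[g]_{\cT^{s,1}(\Omega^c)}$ becomes a zeroth-order quantity while $\vs{s}{1}(\Omega|\R^d)$ retains its first-order ($BV$-type) character. Consequently the reconstruction of the extension energy from the trace data becomes increasingly expensive as $s\to1^-$: in the Whitney-chain estimate above, the chains joining Whitney intervals near the two endpoints of a bounded component of $\Omega$ must pass through the scale at which $(\abs{z-w}+d_z+d_w)\wedge1$ saturates, and there the vanishing of the kernel exponent leaves an uncompensated negative power of $1-s$; this is the source of the extra dependence of the extension constant on a lower bound for $1-s$ when $d=1$, and it has no counterpart for $d\ge2$, where the argument of the previous step goes through with constants uniform in $s$. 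Accordingly, the step I expect to be the main obstacle is the estimate of $[\extns(g)]_{\vs{s}{1}(\Omega|\R^d)}$ on $\Omega\times\Omega$: matching the Whitney increments $\abs{c_i-c_j}$ to the doubly truncated, $\mu_s\otimes\mu_s$-weighted integrand defining $[g]_{\cT^{s,1}(\Omega^c)}$ with constants independent of $s$ near $1$, and recognising that it is exactly this matching which breaks down, and must be repaired separately, when $d=1$.
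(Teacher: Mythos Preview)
Your outline has two genuine gaps, one in each half of the argument.

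\textbf{Trace part.} The claim that one can ``arrange so that $\int_{\{x\in\Omega^c:\,y\in B_x\}}|B_x|^{-1}\,\mu_s(\d x)$ is bounded uniformly in $y\in\Omega$ and in $s$'' is false. Take $\Omega=\R^d_+$ and any corkscrew balls $B_x\subset\Omega$ with $\radius(B_x)\asymp d_x$; for fixed $y\in\Omega$ the set $\{x\in\Omega^c:y\in B_x\}$ has Lebesgue measure $\asymp d_y^d$ and on it $|B_x|^{-1}\mu_s(x)\asymp(1-s)d_y^{-d-s}$, so the integral is $\asymp(1-s)d_y^{-s}$, which blows up as $y\to\partial\Omega$. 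After Fubini your ``second term'' is therefore $\asymp\int_\Omega(1-s)d_y^{-s}\abs{u(y)}\,\d y$, and bounding this by $\norm{u}_{\vs{s}{1}(\Omega|\R^d)}$ is precisely the fractional Hardy inequality. The paper proves exactly this (\autoref{th:Hardy}) with a robust constant, reducing to the sharp half-space Hardy inequality of Frank--Seiringer and tracking the behaviour of the optimal constant $\cD_{s,1}$ as $s\to1^-$; this is the substantive ingredient you are missing.

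\textbf{Extension part.} Defining $c_i:=\fint_{S_i}g\,\d z$ with the Lebesgue measure does \emph{not} give an operator whose norm is bounded uniformly as $s\to1^-$, contrary to what the statement demands. Take $g=\1_K$ with $K\subset\Omega^c$ a fixed compact set at positive distance from $\partial\Omega$. Then $\norm{g}_{L^1(\Omega^c;\mu_s)}\asymp(1-s)$ and (since $d_x,d_y\gtrsim1$ forces the $\cT^{s,1}$-kernel to be bounded while $\mu_s\otimes\mu_s$ carries the factor $(1-s)^2$, with an additional $\asymp(1-s)$ contribution from pairing $K$ with a layer near $\partial\Omega$) also $[g]_{\cT^{s,1}(\Omega^c)}\lesssim(1-s)$, so $\norm{g}_{\cT^{s,1}(\Omega^c)}\asymp(1-s)$. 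But your $c_i$ for the large Whitney cubes whose $S_i$ meet $K$ satisfy $c_i\asymp1$, hence $\norm{\extns(g)}_{L^1(\Omega)}\asymp1$, and the continuity constant blows up like $(1-s)^{-1}$. This is exactly the defect of the Lebesgue-based construction of \cite{DyKa19} that the paper highlights in the introduction. The remedy, implemented in \eqref{eq:extension_def}, is to take the averages against the weighted measure $\widetilde{\mu}_s(\d z)=(1-s)d_z^{-s}\1_{\Omega^c}\,\d z$ over balls $B_{6l_Q}(q_Q)$ that reach $\partial\Omega$; \autoref{lem:estimate_a_{Q,s}} shows these $\widetilde{\mu}_s$-masses scale like $l_Q^{d-s}$, and it is this scaling (not $l_Q^d$) that matches the powers occurring in both the $\VspOm$- and $\cT^{s,1}$-seminorms. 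Your diagnosis of the $d=1$ anomaly is qualitatively correct---the kernel exponent $d+s(p-2)=d-s$ degenerates---but in the paper the actual loss appears cleanly as the factor $(d+s(p-2))^{-1}=(1-s)^{-1}$ in \autoref{lem:new_norm_trace}, not through long Whitney chains crossing $\Omega$.
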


This result is analogous to the local setting where $L^1(\Omega^c;\, \mu_s)$ is a suitable replacement for $L^1(\partial \Omega)$. In particular, a direct analog of the trace result from \autoref{th:trace-extension} for $p=1$, \ie $\norm{u}_{\cT^{s,1}(\Omega^c)}\lesssim \norm{u}_{\vs{s}{1}(\Omega\,|\, \R^d)}$, cannot hold, see the counter example in \autoref{example:trace_p-1}. This is in alignment with the local setting. Recall that there exists a nonlinear bounded extension operator from $L^1(\partial \Omega)$ to $BV(\Omega)$, see e.g. \cite[Theorem 1.2]{MSS18}. It was shown in \cite{Pee79} that a continuous extension map of integrable functions on $\partial \Omega$ to a function of bounded variation in $\Omega$ cannot be linear. If we restrict ourselves to the Besov space $B_{1,1}^0(\partial \Omega)\subset L^1(\partial \Omega)$, then a continuous linear extension to functions $BV(\Omega)$ that is right inverse to the trace map exists, see \cite[Theorem 1.1]{MSS18}. A function $f\in L^1(\partial \Omega)$ is in the Besov space $B_{1,1}^0(\partial \Omega)$ whenever the seminorm $[f]_{B_{1,1}^0(\partial \Omega)}$ is finite, where 
\begin{equation*}
[f]_{B_{1,1}^0(\partial \Omega)}:=\int\limits_{\partial \Omega\times \partial \Omega} \frac{\abs{f(x)-f(y)}}{\abs{x-y}^{d-1}} (\sigma\otimes \sigma)(\d (x,y)).
\end{equation*}
The Besov space $B_{1,1}^0(\partial \Omega)$ is a Banach space endowed with the norm $\norm{f}_{B_{1,1}^0(\partial \Omega)}:= \norm{f}_{L^1(\partial \Omega)}+ [f]_{B_{1,1}^0(\partial \Omega)}$. In step 1 of the proof of \autoref{th:asym_trace}, see \autoref{sec:convergence_pointwise}, we show that our trace norm $	\norm{f}_{\cT^{s,1}(\Omega^c)}$ converges to $\norm{f}_{B_{1,1}^0(\partial \Omega)}$ as $s\to 1^{-}$ for any $f\in C_c^{0,1}(\R^d)$. In this regard, we recover the local extension theorem to $BV(\Omega)$ functions in the limit $s\to 1^{-}$ as the extension operator in \autoref{th:trace-p-1} has a uniformly bounded norm in the same limit.

\medskip

As mentioned above, the spaces $V^{s,p}(\Omega\,|\, \R^d)$, $1<p<\infty$, converge to the traditional Sobolev space $W^{1,p}(\Omega)$ as the order of differentiability $s$ reaches $1$. Having established the robust continuity of the trace and extension operators from \autoref{th:trace-extension} and \autoref{th:trace-p-1}, our second goal is to study the limiting properties of the spaces $\cT^{s,p}(\Omega^c)$ for $s\to 1^{-}$ and to recover the classical trace and extension results for Sobolev spaces.
\begin{theorem}\label{th:asym_trace}
	Let $\Omega\subset \R^d$ be a bounded Lipschitz domain, $s\in(0,1)$, $1<  p<\infty$. 
	\begin{alignat*}{2}
		\norm{\trns u}_{L^{p}(\Omega^c;\,\mu_s)} &\to \norm{\gamma u}_{L^p(\partial \Omega)} \qquad &&(u\in W^{1,p}(\R^d)),\\
		[\trns u]_{\cT^{s,p}(\Omega^c)} &\to [\gamma u]_{W^{1-1/p,p}(\partial \Omega)}  \qquad &&(u\in W^{1,p}(\R^d)),\\
			\norm{\trns u}_{L^{1}(\Omega^c;\,\mu_s)} &\to \norm{\gamma u}_{L^1(\partial \Omega)} \qquad &&(u\in BV(\R^d)),\\
		[\trns u]_{\cT^{s,1}(\Omega^c)} &\to [\gamma u]_{B^{0}_{1,1}(\partial \Omega)} \qquad &&(u\in C_c^{0,1}(\R^d))
	\end{alignat*}
	as $s\to 1^{-}$. Here, $\gamma$ denotes the classical trace operator and $B^{0}_{1,1}(\partial \Omega)$ the Besov space defined above.
\end{theorem}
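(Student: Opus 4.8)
The plan is to establish each of the four limits by combining an explicit parametrization of the boundary strip with a change of variables and the dominated (or monotone) convergence theorem, exploiting the fact that $\mu_s$ is essentially $(1-s) d_x^{-s}\,\d x$ restricted to $\Omega^c$ near $\partial\Omega$, and that this measure concentrates on $\partial\Omega$ as $s\to1^-$. For the two $L^p$-statements, the key observation is that for a fixed continuous $u$ one can write, using a tubular neighborhood $\{x\in\Omega^c: d_x<\delta\}$ with coordinates $(z,t)\in\partial\Omega\times(0,\delta)$ and the coarea formula,
\begin{align*}
\norm{\trns u}_{L^p(\Omega^c;\mu_s)}^p = (1-s)\int_0^{\infty} t^{-s}(1+t)^{-d-s(p-1)} \Big( \int_{\{d_x=t\}} |u|^p \,\d\sigma_t \Big)\,\d t + o(1),
\end{align*}
and since $(1-s)\int_0^\delta t^{-s}\,\d t = \delta^{1-s}\to 1$ while the inner integral converges to $\int_{\partial\Omega}|u|^p\,\d\sigma$ as $t\to0^+$ (by continuity of $u$ and Lipschitz regularity of $\partial\Omega$, which makes the level sets $\{d_x=t\}$ converge to $\partial\Omega$ with their surface measures), a standard Abelian/Tauberian argument for the family of measures $(1-s)t^{-s}\,\d t \rightharpoonup \delta_0$ gives the claim; the contribution from $\{d_x\ge\delta\}$ vanishes because there $t^{-s}$ is bounded and the prefactor $(1-s)\to0$. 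The same mechanism is already packaged in \autoref{lem:weak_convergence_measures}, which states $\mu_s\rightharpoonup \mathcal H^{d-1}\lfloor\partial\Omega$, so these two lines essentially follow by testing weak convergence against the continuous bounded function $|u|^p$ (resp.\ $|u|$ for the $BV$ case after noting $u\in BV(\R^d)$ has an $L^1$ trace and the restriction to $\Omega^c$ is controlled); for the $BV$ case one additionally approximates $u$ by smooth functions and uses the continuity of the trace $BV(\R^d)\to L^1(\partial\Omega)$ together with the uniform (in $s$) bound on $\norm{\trns u}_{L^1(\Omega^c;\mu_s)}$ from \autoref{th:trace-p-1}.

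For the seminorm statements, the strategy is to again restrict to a boundary strip and change variables. Writing $x=(z,t)$, $y=(w,\tau)$ with $z,w\in\partial\Omega$ and $t,\tau\in(0,\delta)$, the kernel in $[g]_{\cT^{s,p}}$ involves $\mu_s(\d x)\mu_s(\d y)\asymp (1-s)^2 t^{-s}\tau^{-s}\,\d t\,\d\tau\,\d\sigma(z)\,\d\sigma(w)$ near the boundary and the denominator $((|x-y|+t+\tau)\wedge1)^{d+s(p-2)}$. For $u\in W^{1,p}(\R^d)$ (so that $u$ is Lipschitz along the boundary in an averaged sense and $\gamma u\in W^{1-1/p,p}(\partial\Omega)$), one substitutes $t=|z-w|\hat t$, $\tau=|z-w|\hat\tau$; the two factors $(1-s)t^{-s}\,\d t$ and $(1-s)\tau^{-s}\,\d\tau$ each contribute a power $|z-w|^{1-s}$ after integration, the denominator contributes $|z-w|^{-(d+s(p-2))}$ (the min with $1$ being irrelevant for the limit since for the limit integrand the relevant scales are small), and one is left with
\begin{align*}
[\trns u]_{\cT^{s,p}(\Omega^c)}^p \longrightarrow c_\infty \int_{\partial\Omega}\int_{\partial\Omega} \frac{|\gamma u(z)-\gamma u(w)|^p}{|z-w|^{d-1+(p-1)}}\,\d\sigma(z)\,\d\sigma(w) = c_\infty[\gamma u]_{W^{1-1/p,p}(\partial\Omega)}^p,
\end{align*}
where one must check that the constant $c_\infty$ coming from the $\hat t,\hat\tau$-integrals equals $1$ with the normalization chosen (counting powers: $d+s(p-2)$ in the denominator against $d-1+p-1=d+p-2$ in the Gagliardo seminorm on the $(d-1)$-dimensional boundary, which match at $s=1$). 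The differences $u(x)-u(y)$ for $x,y$ in the strip are replaced by $\gamma u(z)-\gamma u(w)$ up to errors controlled by $\int |\nabla u|$ along the normal segments, which vanish after integration by the $W^{1,p}$ assumption and dominated convergence. The fourth limit ($p=1$, $u\in C_c^{0,1}(\R^d)$) is the same computation with $p=1$: the denominator exponent $d+s(p-2)=d-s$ tends to $d-1$, and one recovers $[\gamma u]_{B^0_{1,1}(\partial\Omega)}$; Lipschitz continuity of $u$ makes all the error terms and the dominating function completely explicit.

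The main obstacle I expect is the rigorous justification of the change of variables and the passage to the limit inside the double integral for the seminorm statements — specifically (i) controlling the region where $|x-y|$ is not comparable to $d_x+d_y$ or where the $\wedge 1$ truncation is active, which requires showing these contributions are negligible as $s\to1^-$ (here the uniform bounds from \autoref{th:trace-extension} furnish an $s$-independent dominating quantity, so one can localize to a small strip and a neighborhood of the diagonal at the cost of $o(1)$), and (ii) replacing $u(x),u(y)$ by their boundary traces $\gamma u(z),\gamma u(w)$ with quantitative control: for $W^{1,p}$ this needs the estimate $|u(z,t)-\gamma u(z)|\le \int_0^t|\nabla u(z,r)|\,\d r$ in tubular coordinates together with a Hardy-type / fractional Hardy inequality to absorb the resulting error terms uniformly in $s$, and checking that the error terms indeed carry an extra power of $t$ or $\tau$ that kills them in the limit. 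The bookkeeping of the exact constant $c_\infty=1$ is tedious but routine once the normalizing factor $(1-s)$ in $\mu_s$ and the truncation $\wedge 1$ are tracked carefully; I would isolate it as a one-dimensional computation $\lim_{s\to1^-}(1-s)^2\iint_{(0,\infty)^2}\hat t^{-s}\hat\tau^{-s}(\hat t+\hat\tau+1)^{-(d+s(p-2))}\,\d\hat t\,\d\hat\tau\cdot(\text{angular factor})$ and verify it separately, reducing the geometric step to flattening the boundary via the Lipschitz charts and estimating the Jacobian errors, which are $O(\text{strip width})$ and hence harmless.
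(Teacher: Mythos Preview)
Your treatment of the $L^p$-convergences is essentially the paper's: both rely on the weak convergence $\mu_s\rightharpoonup\sigma$ encoded in \autoref{lem:weak_convergence_measures}, together with the observation that the contribution from $\{d_x\ge\delta\}$ vanishes. The paper does the $W^{1,p}$ case first for $u\in C_c^{0,1}(\R^d)$ and then passes to general $u$ by density, using the $s$-uniform trace bounds; you propose the same density step for the $BV$ line, and it is equally needed for the $W^{1,p}$ line.

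For the seminorm limits the paper takes a genuinely different route from your scaling argument. Rather than substitute $t=|z-w|\hat t$, $\tau=|z-w|\hat\tau$ and compute a constant $c_\infty$, the paper again works first with $u\in C_c^{0,1}(\R^d)$ and uses weak convergence of the \emph{product} measures $\bar\mu_s\otimes\bar\mu_s\rightharpoonup\sigma\otimes\sigma$. The integrand $h_s(x,y)$ is sandwiched between two continuous bounded functions $h_{\eps,\star}$ and $h_\eps^\star$ (obtained by cutting out $\{|x-y|<\eps\}$ and freezing the $s$-dependent exponents), plus a near-diagonal error $g_{s,\eps}$ that is shown to vanish as $s\to1^-$ and then $\eps\to0^+$ by a direct local estimate after flattening the boundary. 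This costs nothing in constants: weak convergence of measures automatically delivers $c_\infty=1$, and the density step then upgrades from $C_c^{0,1}$ to $W^{1,p}$.

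Your plan is not wrong in spirit, but step (ii)---replacing $u(x),u(y)$ by $\gamma u(z),\gamma u(w)$ for general $u\in W^{1,p}$ and absorbing the error via a Hardy-type inequality---is more delicate than you indicate. The error terms live in a \emph{double} integral with the singular kernel $((|x-y|+d_x+d_y)\wedge1)^{-d-s(p-2)}$, and a one-variable Hardy inequality does not directly control such cross terms; one would need to reproduce something close to the full trace estimate of \autoref{prop:trace_seminorm_estimate} for the error. The paper sidesteps this entirely by reducing to Lipschitz $u$ first (where $|u(x)-u(y)|\le L|x-y|$ makes the near-diagonal piece manifestly small) and invoking density only at the very end. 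Similarly, your proposed isolation of $c_\infty$ as $\lim_{s\to1^-}(1-s)^2\iint_{(0,\infty)^2}\hat t^{-s}\hat\tau^{-s}(1+\hat t+\hat\tau)^{-d-s(p-2)}\,\d\hat t\,\d\hat\tau$ does evaluate to $1$ by iterating the approximate-identity property of $(1-s)\hat t^{-s}\,\d\hat t$, but you would also have to track the Jacobian of the tubular chart and the discrepancy $|x-y|\ne|z-w|$ separately; the weak-convergence argument absorbs all of this at once because \autoref{lem:weak_convergence_measures} already contains the surface-measure Jacobian $\sqrt{1+|\nabla\phi|^2}$.
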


\begin{remark}
	In the case of a bounded connected $C^{1,1}$-domain $\Omega$ and $p=2$, \autoref{th:trace-extension} and \autoref{th:asym_trace} have been established in \cite{GrHe22}, see the discussion of related literature below. 
\end{remark}

\medskip

\subsection*{Applications to the Dirichlet problem} Let us present a well-posedness result for \eqref{eq:p-Lap-eq}. We define the space of test functions for the Dirichlet problem as follows:
\begin{align}
V^{s,p}_0(\Omega | \R^d) = \{ v \in \VspOm | \; v = 0 \text{ a.e. on } \R^d \setminus \Omega\}
\end{align}

\begin{definition} 	Let $\Omega\subset \R^d$ be a bounded Lipschitz domain, $s\in(0,1)$, $1< p<\infty$. Let $g \in \cT^{s,p}(\Omega^c)$ and $f \in \VspOm' \supset L^{p'}(\Omega)$. We say that $u \in \VspOm$ is a weak solution of \eqref{eq:p-Lap-eq} if for every $\varphi \in V^{s,p}_0(\Omega | \R^d)$ the equation \eqref{eq:tested-eq} holds true.
\end{definition}

Here is our result on well-posedness of the Dirichlet problem.

\begin{corollary}\label{cor:well_posedness} 	Let $\Omega\subset \R^d$ be a bounded Lipschitz domain, $s_\star \leq s < 1$, $1< p<\infty$. Let $g \in \cT^{s,p}(\Omega^c)$ and $f \in \VspOm' \supset L^{p'}(\Omega)$. Then there exists a unique weak solution $u \in \VspOm$ to problem \eqref{eq:p-Lap-eq}. Moreover, there is a constant $c>0$, depending only on $p, \Omega, s_\star$ such that
	\begin{align}\label{eq:estim-energy}
	\|u\|_{\VspOm} \leq c \big( \|g\|_{\cT^{s,p}(\Omega^c)} + \|f\|_{\VspOm'} \big) \,.
	\end{align}
\end{corollary}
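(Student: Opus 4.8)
The plan is to deduce well-posedness from the trace and extension theorems via the standard direct method of the calculus of variations. First I would reduce the Dirichlet problem with nonzero data $g$ to a homogeneous problem: using \autoref{th:trace-extension}, set $G := \extns(g) \in \VspOm$ and look for $u = G + v$ with $v \in V^{s,p}_0(\Omega\,|\,\R^d)$, since $\trns(u-G)=0$ means exactly $u-G = 0$ a.e.\ on $\Omega^c$. Substituting into \eqref{eq:tested-eq}, the problem becomes: find $v \in V^{s,p}_0(\Omega\,|\,\R^d)$ such that
\begin{align*}
\frac{1-s}{2} \iint\limits_{(\Omega^c\times \Omega^c)^c} |v(x)+G(x)-v(y)-G(y)|^{p-2}\big((v+G)(x)-(v+G)(y)\big)\big(\varphi(x)-\varphi(y)\big)\frac{\d y\,\d x}{|x-y|^{d+sp}} = \langle f,\varphi\rangle
\end{align*}
for all $\varphi \in V^{s,p}_0(\Omega\,|\,\R^d)$.

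Next I would identify this as the Euler--Lagrange equation of the strictly convex, coercive functional
\begin{align*}
J(v) := \frac{1-s}{2p}\iint\limits_{(\Omega^c\times\Omega^c)^c} \frac{|(v+G)(x)-(v+G)(y)|^p}{|x-y|^{d+sp}}\,\d y\,\d x - \langle f, v\rangle
\end{align*}
over the reflexive Banach space $V^{s,p}_0(\Omega\,|\,\R^d)$ (for $p>1$). Coercivity is the crucial quantitative input: one needs a Poincaré-type inequality $\|v\|_{L^p(\Omega)} + [v]_{\VspOm} \lesssim [v]_{V^{s,p}((\Omega^c\times\Omega^c)^c)}$ for $v$ vanishing on $\Omega^c$, with constant depending only on $\Omega$, $p$, and a lower bound $s_\star$ on $s$; such a robust fractional Poincaré inequality is standard (it follows e.g.\ from the compact embedding $V^{s,p}_0 \hookrightarrow L^p(\Omega)$ together with the $s\to 1^-$ convergence to $W^{1,p}_0$, or directly). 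Expanding the $p$-th power and using the triangle inequality in the $L^p$ of the measure $|x-y|^{-d-sp}\d x\,\d y$ on $(\Omega^c\times\Omega^c)^c$ gives $[v+G] \geq [v] - [G] \geq [v] - \|G\|_{\VspOm}$, and since $[G]=[\extns g] \lesssim \|g\|_{\cT^{s,p}(\Omega^c)}$ by \autoref{th:trace-extension}, one obtains $J(v) \to \infty$ as $\|v\| \to \infty$, with the growth controlled uniformly in $s \geq s_\star$. Weak lower semicontinuity of $J$ follows from convexity and continuity of the integral functional together with the linearity of $v\mapsto\langle f,v\rangle$; hence a minimizer $v$ exists, it is unique by strict convexity (strict convexity of $t\mapsto |t|^p$ for $p>1$, modulo the usual care that the functional is strictly convex on the quotient by constants, but constants are excluded since $v$ vanishes on $\Omega^c$), and it satisfies the weak formulation. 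Setting $u = G+v$ yields the weak solution, and uniqueness of $u$ follows since any two solutions differ by an element of $V^{s,p}_0$ solving the homogeneous equation, which must be $0$.

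Finally, for the energy estimate \eqref{eq:estim-energy}, I would test the weak formulation for $v$ with $\varphi = v$ itself, obtaining
\begin{align*}
(1-s)\iint\limits_{(\Omega^c\times\Omega^c)^c} \frac{|(v+G)(x)-(v+G)(y)|^{p}}{|x-y|^{d+sp}}\,\d y\,\d x \;\lesssim\; \langle f, v\rangle + (1-s)\iint |(v+G)(x)-(v+G)(y)|^{p-1}|G(x)-G(y)|\frac{\d y\,\d x}{|x-y|^{d+sp}},
\end{align*}
then apply Hölder (with exponents $p'$ and $p$) to the last term, Young's inequality to absorb $[v+G]^p$ into the left side, the bound $\langle f,v\rangle \leq \|f\|_{\VspOm'}\|v\|_{\VspOm} \leq \|f\|_{\VspOm'}(\|v\|_{\VspOm}$, another Young's inequality combined with the robust Poincaré inequality to control $\|v\|_{\VspOm}$ by $[v+G]$ plus $\|G\|_{\VspOm}$, and $\|G\|_{\VspOm} \lesssim \|g\|_{\cT^{s,p}(\Omega^c)}$. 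Collecting terms gives $\|u\|_{\VspOm} \leq \|v\|_{\VspOm} + \|G\|_{\VspOm} \lesssim \|g\|_{\cT^{s,p}(\Omega^c)} + \|f\|_{\VspOm'}$ with the constant depending only on $p,\Omega,s_\star$. The main obstacle is ensuring every constant — in the Poincaré inequality, in the continuity of $\extns$, and in the Hölder/Young manipulations — is genuinely uniform as $s\to 1^-$; the $s$-robustness of $\extns$ is exactly what \autoref{th:trace-extension} provides, and the $(1-s)$ weights are built into the seminorm $[\cdot]_{\VspOm}$ so that the coercivity constant does not degenerate.
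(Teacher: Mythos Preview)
Your proposal is correct and follows essentially the same approach as the paper: reduce to a variational problem via the extension operator from \autoref{th:trace-extension}, apply the direct method on the reflexive space using the robust Poincar\'e inequality (which the paper cites as \autoref{pro:poinc-robust}), and extract the energy estimate from the minimizer. The only cosmetic difference is that the paper minimizes $I$ directly over the affine subspace $\{\extns g + v_0 : v_0 \in V_0^{s,p}\}$ and derives \eqref{eq:estim-energy} from the one-line comparison $I(u) \le I(\extns g)$, whereas you substitute $u = G+v$ and test the Euler--Lagrange equation with $\varphi = v$; both routes are standard and yield the same bound with the same dependencies.
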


\begin{proof}
	Let $V_g^{s,p}(\Omega|\R^d)$ be the set of all functions $v$ of the form $v = \extns g + v_0$ with $v_0 \in \VspOmNull$ and $\extns g$ as in \autoref{th:trace-extension}. This set is a closed convex subset of $\VspOm$. Let $I: V_g^{s,p}(\Omega|\R^d) \to \R$ be defined by
	\begin{align*}
	I(v) =  \frac{1-s}{2p}  \iint\limits_{(\Omega^c\times \Omega^c)^c} \frac{\abs{v(x)-v(y)}^p}{\abs{x-y}^{d+s\,p}} \d y \d x - f(v) \,.
	\end{align*}
	The functional $I$ is strictly convex and weakly lower semi-continuous on the reflexive, separable Banach space $V_g^{s,p} (\Omega | \R^d)$. Since \begin{equation*}
		|f(v)| \leq \|f\|_{\VspOm'} \|v\|_{\VspOm} \leq \delta \|v\|^p_{\VspOm} + (p')^{-1}(\delta p )^{-1/(p-1)} \|f\|^{p'}_{\VspOm'}
	\end{equation*} for every $\delta \in (0,1)$, we can apply the Poincaré inequality, see \autoref{pro:poinc-robust}, to the function $v-\extns(g)$ to obtain
	\begin{align*}
	I(v) \geq  \frac{1}{4p} [v]^p_{\VspOm} +  c_1^{-1} \|v\|^p_{L^p(\Omega)}-  c_1 \|f\|^{p'}_{\VspOm'} - c_1 \norm{\extns g}_{\VspOm}^p
	\end{align*}
	for some constant $c_1$ depending on $p$ and on the constant from \autoref{pro:poinc-robust}. Thus, the functional $I$ is coercive in the sense that $I(v) \to +\infty$ for $\|v\|_{\VspOm} \to +\infty$. We have shown that $I$ attains a unique minimizer $u$ on the set $V_g^{s,p}(\Omega|\R^d)$. It is now straightforward to show that the function $u$ solves problem \eqref{eq:p-Lap-eq}. The claimed estimate follows from $I(u) \leq I(\extns g)$, the above estimate and \autoref{th:trace-extension}.
\end{proof}

Let us quickly review some related results on problems for nonlocal operators in bounded domains with given exterior data. Note that there are also approaches to nonlocal problems in bounded domains $\Omega$ with data given on $\partial \Omega$ such as \cite{Grub15}, which we do not take into account here. 

\medskip

Some early well-posedness results for variational nonlocal problems of the type \eqref{eq:p-Lap-eq} can be found in \cite{SeVa12}, \cite{SeVa13}, \cite{FKV15}. The case of homogeneous problems, i.e. when $g=0$, is particularly simple and has been treated by several authors. Note that the vector space $\widetilde{D}^{s,p}(\Omega)$ in \cite{PiPu17} equals the space $\VspOmNull$. Existing results for non-zero data $g$ often assume $g$ to be regular in all of $\R^d$, e.g. \cite[Theorem 2.3]{DKP16}, \cite{DKP16}, \cite[Theorem 8]{LiLi17} and \cite[Proposition 2.2]{ABH19}. As \cite[Example 1]{KKP17} shows, optimal results require extra care and more regularity than just suitable integrability of $g$ in $\R^d$. Also $g\in W^{s,p}(\Omega)\cap L^{p}(\R^d; (1+\abs{x})^{-d-sp}\d x)$ does not imply well-posedness as claimed in \cite{Pal18}, which is not essential at all for the main results of \cite{Pal18}. Workarounds avoiding global $W^{s,p}(\R^d)$-regularity are used in  \cite{KKP16}, \cite[Lemma 6]{KKP17}, and \cite[Definition 2.10]{BLS18}. These approaches assume $g \in W^{s,p}(\Omega_\varepsilon) \cap L^{p}(\R^d; (1+\abs{x})^{-d-sp}\d x)$ for some enlarged domain $\Omega_\varepsilon=\{ x\in \R^d\,|\, \distw{x}{\overline{\Omega}}<\eps \}$. Concerning the case $p=1$, we refer to \cite{BDLMV23} for results on existence and regularity of solutions to \eqref{eq:p-Lap-eq} with given exterior data.

\medskip

Note that well-posedness and energy estimates similar to \eqref{eq:estim-energy} are proved for $p=2$ in \cite{FoKa22} and for general $p$ in \cite{Fog23}. The present work resolves the matter of optimal assumption on exterior data $g$, which has been achieved for $p=2$ and $C^{1,1}$-domains in \cite{GrHe22}.

\begin{remark}
	Note that the fractional $p$-Laplacian is well defined in a point $x \in \R^d$ if $u$ is sufficiently regular in a neighborhood of $x$ and $u \in L^{p-1}(\R^d; (1+\abs{x})^{-d-sp}\d x)$. For a variational approach, the tail-space $L^{p}(\R^d; (1+\abs{x})^{-d-sp}\d x)$ is more natural, but modifications are possible. 
\end{remark}

\begin{remark} For demonstration purposes, we have presented the well-posedness result for the fractional $p$-Laplacian. It is straightforward to extended it to more general nonlinear operators of the form
	\begin{align*}
	\operatorname{pv.} \int\limits_{\R^d} \Phi \big(x, |u(x)-u(y)| \big) \big(u(x) - u(y) \big) k(x,y) \d y \,.
	\end{align*}
	for appropriate functions $\Phi$ and kernels $k$.
\end{remark}

\medskip

\subsection*{Related results} Let us discuss related results concerning function spaces, in particular trace theorems. As explained above, the main new feature of the energy space $\VspOm$ is that functions in $\VspOm$ satisfy some incremental regularity across the boundary plus some integrability at infinity. \cite{DyKa19} provides trace and extension results for $\VspOm$ for rather general domains $\Omega$\footnote{Note that in \cite{DyKa19} the domain of integration in (1.6) and (1.7) has to be changed from $\Omega^c \times \Omega^c$ to $M \times \Omega^c$ with $M = \{x \in \Omega^c | \dist(x, \partial \Omega) < 1\}$}. The proof is based on a Whitney decomposition of $\Omega$ and $\Omega^c$, which we employ here, too. However, the construction of the extension operator in \cite{DyKa19} is much simpler and uses the Lebesgue measure. Thus, for $s \to 1^{-}$, one does not recover the classical extension result. In order to resolve this problem, we introduce the measure $\mu_s$ on $\Omega^c$, which converges to the surface measure on $\partial \Omega$.

In \cite{BGPR20} the authors prove a version of the Douglas identity and provide trace and extensions results for spaces like $V^{s,2}(\Omega | \R^d)$, where they allow for a large class of L\'{e}vy measures  $\nu(\d h)$ instead of $|h|^{-d-2s} \d h$. The proof is based on a careful study of the Poisson kernel and provides a representation of the energy of the solution $u$ to problems like \eqref{eq:p-Lap-eq} ($p=2$) in terms of its trace on $\Omega^c$. The article leaves open the question of robustness as $s \to 1^{-}$. Unlike \cite{BGPR20}, we define the trace space for general $p \geq 1$ with the help of explicitly given norms that allow for robustness and limit results as $s \to 1^{-}$. Extensions of the results in \cite{BGPR20} to some nonlinear cases, still based on $L^2$-Lévy integrable kernels, can be found in \cite{BGPR20b}. 

A systematic study of generalizations of the energy space $\VspOm$ in the case of $p=2$ and a L\'{e}vy measure $\nu(\d h)$ can be found in \cite{FoKa22}, where functional inequalities, well-posedness results and some nonlocal-to-local convergence results are provided. The trace space is shown to contain a certain weighted $L^2$-space of functions on $\Omega^c$. \cite{Fog23} contains extensions to the general case $p > 1$. Nonlocal energy spaces appear also in the context of Markov jump processes in \cite{Von21}. Here, the author considers the intersection with $L^2(\R^d; m)$, where $m(x) = \1_{\Omega}(x) + \mu(x) \1_{\Omega^c}(x)$ and $\mu(x)$ behaves like $\dist(x, \partial \Omega) ^{-2s}$ for $x$ close to $\partial \Omega$, see Remark 2.37 in \cite{FoKa22} for detailed comments. This approach together with functional inequalities and questions of well-posedness has been studied for more general kernels in \cite{Vu22}.

\medskip

The present work can be seen as an extension of results in \cite{GrHe22}. In the present work, we treat general bounded Lipschitz domains and the full range $p \geq 1$ instead of bounded $C^{1,1}$-domains and $p=2$ in \cite{GrHe22}. Both works use the measure $\mu_s$ but the construction of the extension operator is different. In the present work we employ the Whitney decomposition technique and not the Poisson extension. The study of nonlocal Neumann problems as in \cite{GrHe22} together with the asymptotic behavior for $s \to 1^{-}$ is possible in our framework, too. In order to keep the scope of this work reasonable, we defer this line of research until a later date.

\medskip

Last, let us mention recent trace and extension results for nonlocal function spaces, where problems similar to ours occur but the setup is conceptually different. In \cite{DuTi17} the trace space $H^{1/2}(\partial \Omega)$ is recovered as the trace space of a certain $L^2(\Omega)$-space with a nonlocal interaction kernel that has a localizing property at the boundary $\partial \Omega$. The analogous result for $W^{s-\frac{1}{p},p}(\partial \Omega)$ is proved in \cite{DMT22}. The result is extended further to domains with very rough boundaries including those with spatially varying dimension in \cite{Fos21}. See \cite{ScDu23} for applications to nonlocal equations with Dirichlet data given on $\partial \Omega$. Given a localization parameter $\delta >0$ and a domain $\Omega$, the authors of \cite{DTWY22} study trace and extension operators between the domain and a layer $\{x \in \Omega ^c | \dist (x, \partial \Omega) < \delta\}$. The operators are shown to be robust as $\delta \to 0$, which makes it possible to recover classical trace results. For more details we refer to the discussion in \cite[Section 1.2]{GrHe22}

\medskip

The development of nonlocal functions spaces and related trace and extension results benefits greatly from classical results for Sobolev, Sobolev-Slobodeckij, or Besov spaces. Early results on trace spaces for $W^{1,p}(\Omega)$ can be found in \cite{trace_aronzajin}, \cite{trace_Slobodeckij_original}, \cite{trace_prodi} \cite{Gag57}, \cite{trace_Slobodeckij} and the monograph \cite{Nec67}. See \cite{Nec12} for an English translation and, in particular, Chapter 2.5 therein. Lipschitz domains and fractional-order spaces are covered in \cite{Gri85}, e.g., in Theorem 1.5.1.3 and Theorem 1.5.2.1. For domains with corners see also \cite{Jakovlev}. The corresponding state-of-the-art around the year 1970 is summarized in Chapter 1, Sections 7--9 of \cite{lions}. \cite[Chapter IV]{BIN75} is another standard reference focusing on contributions of researchers from the Soviet Union. Another important monograph in this direction is \cite{triebel_1}, in particular Chapter 3.3.3 and Chapter 3.3.4. Trace and extensions results are provided in \cite{Mar87} under minimal regularity assumptions on the domains. A survey of results on boundary value problems for higher-order elliptic equations with degeneracies along the boundary is given in \cite{NLW88}. \cite{Kim07} extends well-known trace assertions for weighted Sobolev spaces.The aforementioned list is rather selective and not complete at all. Even some fundamental problems such as a trace result for $H^s(\Omega)$, $1 < s < \frac32$ and $\Omega$ a bounded Lipschitz domain are not covered in the list above, see \cite{Din96}.

\medskip

Very useful references for our work are contributions of A. Jonsson and H. Wallin in \cite{JoWa78, JoWa84, Jon94}. The setting in the aforementioned references includes results for subsets of the Euclidean space endowed with general doubling measures. This is related to our framework because we consider measure spaces $(\Omega^c; \mu_s)$ with $\mu_s$ as in \eqref{eq:def_mu_s}. Moreover, the construction used in the proof of the extension result \autoref{th:trace-extension} is inspired by the corresponding results Theorem 3.1 and Theorem 4.1 in \cite{JoWa78}.

\medskip

\subsection*{Outline} In \autoref{sec:prelims} we fix the notation and shortly introduce function spaces used throughout this work. The trace embeddings are studied in \autoref{sec:tracetheorem}. We divide the proofs of the trace results from \autoref{th:trace-extension} and \autoref{th:trace-p-1} into the $L^p$-embedding, see \autoref{prop:trace_Lp_estimate}, and the seminorm inequality, see \autoref{prop:trace_seminorm_estimate}. We construct the extension operator in \autoref{sec:extension}. The extension theorems are proven in \autoref{prop:extension_continuity_Lp_part} as well as \autoref{prop:extension_continuity_seminorm} with precise dependencies of the operator norms. Lastly, the limiting properties of the spaces $\cT^{s,p}(\Omega^c)$, see \autoref{th:asym_trace}, are proven in \autoref{sec:convergence_pointwise}.

\subsection*{Acknowledgment} 

The authors thank Juan Pablo Borthagaray for helpful discussions and Solveig Hepp and Thorben Hensiek for comments to make the arguments more perspicuous.

\section{Preliminaries}\label{sec:prelims}

\subsection{Notation}

For two real numbers $a,b\in \R$ we write $a\wedge b = \min\{ a,b \}$, $a \vee b := \max \{ a,b \}$ and $\lfloor a \rfloor= \max (-\infty,a]\cap \Z$. The ball of radius $r>0$ centered at $x\in \R^d$ in the $d$-dimensional Euclidean space is written as $B_r(x)$ or $B_r^{(d)}$ whenever we want to specify the dimension. For a set $A$ we denote by $\1_{A}$ the indicator function on $A$. An open set $\Omega \subset \R^d$ is said to have a uniform Lipschitz boundary, if there exists a localization radius $r>0$ and a constant $L>0$ such that for any boundary point $z\in \partial \Omega\ne \emptyset$ there exists a translation and rotation $T_z:\R^d\to \R^d$ satisfying $T_z(z)=0$ as well as a Lipschitz continuous function $\phi_z:\R^{d-1}\to \R$ whose Lipschitz constant is bounded by $L$ such that $T_z(\Omega\cap B_{r}(z))=\{ (x',x_d)\in B_{r}(0)\,|\, \phi_z(x')>x_d \}$, see e.g. \cite[Definition 13.11]{Leo17} and the discussion in \cite[Chapter 1.2.1]{Gri85}. An open set $\emptyset \ne B\subset \R^d$ is said to satisfy the uniform exterior cone condition, if we find an opening angle $\alpha$ and a height $h_0>0$ such that for any $z\in \partial \Omega$ there exists an exterior cone $\cC_z\subset \overline{\Omega}^c$ with apex at $z$, height $h_0$. The notion of uniform interior cone condition is defined analogously. Note that an open set with a uniform Lipschitz boundary satisfies both uniform interior and exterior cone condition. The interior cones (resp. the exterior cones) can simply be constructed via $\cC_z:= T_z^{-1}\{ (x',x_d)\in B_r(0)\,|\, x_d<-L/2 \abs{x'} \}$ for $z\in \partial \Omega$. We say that $\Omega \subset \R^d$ is a Lipschitz domain, if it is open and has a uniform Lipschitz boundary. Notice that we do not assume $\Omega$ to be connected. Nevertheless, a bounded Lipschitz domain has finitely many connected components since the uniform interior cone condition bounds the volume of each connected component from below by a uniform positive constant. By $\distw{x}{A}= \inf\{ \abs{x-a}\,|\, a\in A \}$ we denote the distance of $x$ to a closed set $A\subset \R^d$. When the dependencies are clear, we write for short $d_x:= \distw{x}{\partial \Omega}$ for any $x\in \R^d$. Furthermore, we use for $r>0$ the notation 
\begin{align}\label{eq:def-Omega-decomp}
\begin{split}
	\Omega_r^{\text{ext}}:= \{ x\in \overline{\Omega}^c\,|\, d_x<r\}, \quad \Omega^r_{\text{ext}}:= \{ x\in \overline{\Omega}^c\,|\, d_x \ge r\}.
\end{split}
\end{align}
We denote by $\cH^{(d-l)}$ the normalized $(d-l)$-dimensional Hausdorff measure on $\R^d$. The surface measure of the $(d-1)$-dimensional unit sphere will be written for short as $\cH^{(d-1)}(\partial B_1)=\omega_{d-1}$. To shorten the notation, we write $\sigma$ for the surface measure on $\partial \Omega$. The inner radius of the domain $\Omega$, we denote by 
\begin{equation*}
	\inr{\Omega}:=\sup\{ r\,|\, B_r\subset \Omega \}.
\end{equation*}
We will use lower-case letters $c_1,c_2, \dots$ with running indices as constants in our proofs and reset them after every proof. When we introduce a new constant, we write $C=C(\cdots)$ to denote what the constant depends on, \ie $C=C(d,\Omega)>0$ depends only on the dimension $d$ and the set $\Omega$.  

\subsection{Function spaces} The following function spaces will be used throughout this work. We denote by $W^{s,p}(\Omega)$ (resp. $W^{s,p}(\partial \Omega)$), $s\in (0,1), p\ge 1$, the Sobolev-Slobodeckij space of functions in $u\in L^p(\Omega)$ satisfying $[u]_{W^{s,p}(\Omega)}:=[u]_{V^{s,p}(\Omega\,|\, \Omega)}<\infty$ endowed with the norm $\norm{u}_{W^{s,p}(\Omega)}^p:= \norm{u}_{L^p(\Omega)}^p + [u]_{W^{s,p}(\Omega)}^p$ (resp. $\partial \Omega$ with the surface measure). See \eqref{eq:V-seminorm-def} for the definition of the seminorm $[\cdot]_{V^{s,p}(A\,|\, B)}$. We write $BV(\Omega)$ for the space of functions $u\in L^1(\Omega)$ with bounded variation endowed with the norm $\norm{u}_{BV(\Omega)}:= \norm{u}_{L^1(\Omega)}+ \abs{\nabla u}(\Omega)$. The Bessel-potential spaces $H^{s,p}(\R^d)$ are defined in \eqref{eq:bessel_potential_spaces}. As mentioned in the introduction, a variational approach to equations like \eqref{eq:p-Lap-eq} leads naturally to function spaces like $V^{s,p}(\Omega\,|\, \R^d)$ which we introduced in \eqref{eq:V-def}. These function spaces are the focus of our study. They were first introduced in the works \cite{SeVa12, SeVa14, FKV15} for the case $p=2$. We also refer to the work \cite{DRV17}, in which the nonlocal normal operator was introduced, and \cite{Fog20,FoKa22,Fog23} for an intensive study of these spaces for general $p$. It is well known that $V^{s,p}(\Omega\,|\, \R^d)$ is a separable Banach space (Hilbert space for $p=2$) which is reflexive for $1<p<\infty$, see e.g. \cite[Chapter 3.4]{Fog20}

\medskip

The spaces $V^{s,p}(\Omega\,|\, \R^d)$ allow for a Poincar\'{e} inequality, which is an important ingredient for the proof of well-posedness for the Dirichlet problem \eqref{eq:p-Lap-eq} together with an energy estimate, see \autoref{cor:well_posedness}. We will need a version of the Poincaré inequality that is robust as $s$ reaches $1$. 

\begin{proposition}[{\cite[Theorem 10.1]{Fog23}}] \label{pro:poinc-robust}
	Let $p > 1$ and $s_\star \in (0,1)$. Let $\Omega \subset \R^d$ be a bounded Lipschitz domain. Then there exists $c>0$ such that for all $s_\star \leq s < 1$ and $u \in \VspOmNull$
	\begin{align}
	\|u\|_{L^p(\Omega)} \leq c \|u\|_{\VspOm} \,.
	\end{align}
\end{proposition}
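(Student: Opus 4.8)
The plan is to prove the estimate $\|u\|_{L^p(\Omega)} \le c\, [u]_{\VspOm}$ for $u \in \VspOmNull$, which is equivalent to the stated inequality since $\|u\|_{\VspOm}^p = \|u\|_{L^p(\Omega)}^p + [u]_{\VspOm}^p \ge [u]_{\VspOm}^p$; and the point is that $c$ must not blow up as $s \to 1^-$. The natural route is a compactness-contradiction argument combined with the nonlocal-to-local convergence of the spaces $\VspOm$. Suppose the claim fails: then there is $s_\star \in (0,1)$, a sequence $s_k \in [s_\star, 1)$ and functions $u_k \in V^{s_k,p}_0(\Omega|\R^d)$ with $\|u_k\|_{L^p(\Omega)} = 1$ and $[u_k]_{V^{s_k,p}(\Omega|\R^d)} \to 0$. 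Since $u_k = 0$ a.e. on $\R^d \setminus \Omega$, the seminorm over $\Omega^c \times \Omega^c$ vanishes, so the full $\R^d \times \R^d$ Gagliardo energy (with the factor $1-s_k$) is $[u_k]_{V^{s_k,p}(\Omega|\R^d)}^p$ up to the fixed combinatorial constant from the decomposition $(\Omega^c\times\Omega^c)^c = (\Omega\times\R^d) \cup (\R^d\times\Omega)$, hence also tends to $0$.

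The two cases $s_k \to s_\infty < 1$ and $s_k \to 1$ are handled separately. If (a subsequence of) $s_k \to s_\infty \in [s_\star, 1)$, then the $u_k$ are bounded in $W^{s_\star, p}(\Omega')$ for any ball $\Omega' \Supset \Omega$ (using that the $(1-s_k)$-weighted $W^{s_k,p}$ seminorms are bounded and controlling lower-order exponents on a bounded set), so by the Rellich–Kondrachov-type compactness for fractional Sobolev spaces a subsequence converges in $L^p(\Omega)$ to some $u$ with $\|u\|_{L^p(\Omega)} = 1$; lower semicontinuity of the seminorm forces $[u]_{W^{s_\infty,p}(\R^d)} = 0$, so $u$ is constant, and since $u = 0$ on $\Omega^c$ (a set of positive measure) we get $u \equiv 0$, contradicting $\|u\|_{L^p(\Omega)} = 1$. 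If instead $s_k \to 1$, one invokes the Bourgain–Brezis–Mironescu / Fog23-type compactness result: a sequence with $\sup_k (1-s_k)\, [u_k]_{W^{s_k,p}(\R^d)}^p < \infty$ and bounded in $L^p$ is precompact in $L^p_{\loc}$, and every $L^p$-limit lies in $W^{1,p}$ with $\|\nabla u\|_{L^p}^p \le C \liminf_k (1-s_k)[u_k]_{W^{s_k,p}}^p$ (this is exactly the BBM asymptotics cited after \eqref{eq:V-seminorm-def}, in the robust form of \cite{Foghem_2023}). Thus again a subsequence converges in $L^p(\Omega)$ to $u$ with $\|u\|_{L^p(\Omega)}=1$, $u \in W^{1,p}(\R^d)$, $\nabla u = 0$, so $u$ is constant, and $u=0$ on $\Omega^c$ gives $u\equiv 0$, a contradiction.

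The main obstacle is making the compactness uniform in $s$ near $s=1$: one needs a single compact-embedding statement that covers both the fixed-order regime and the degenerating-order regime, and for the latter one must quote the correct robust version of the BBM theorem (with the $(1-s)$ normalization built in, precisely as in \eqref{eq:V-seminorm-def}) together with the accompanying compactness — this is the content of \cite[Theorem 10.1]{Fog23} and the convergence results \cite[Theorem 1.1, Theorem 1.3, Theorem 1.5]{Foghem_2023}. A secondary technical point is that all of this must be done on an enlarged bounded set rather than on $\R^d$ itself, which is harmless because $u_k$ vanishes outside $\Omega$ and the energy controls increments across $\partial\Omega$; and one should also note that $\VspOmNull$ being a closed subspace ensures the limit still vanishes on $\Omega^c$. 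Since the statement is quoted verbatim from \cite{Fog23}, in the paper itself this proposition is simply cited rather than reproven; the argument above is the standard proof one would give.
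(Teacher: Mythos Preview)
You correctly anticipated the situation: the paper does not supply its own proof of this proposition; it is simply cited from \cite[Theorem 10.1]{Fog23} and used as a black box in the well-posedness argument. Your compactness-contradiction sketch, splitting into the cases $s_k\to s_\infty<1$ (fractional Rellich) and $s_k\to 1$ (BBM-type compactness and $\Gamma$-liminf from \cite{Foghem_2023}), is the standard route and is sound in outline. One small quibble: the inequality as literally stated, with the full norm $\|u\|_{\VspOm}$ on the right, is trivial since $\|u\|_{L^p(\Omega)}\le \|u\|_{\VspOm}$ by definition; the nontrivial content (and what is actually used in the proof of Corollary~\ref{cor:well_posedness}) is the seminorm version $\|u\|_{L^p(\Omega)}\le c\,[u]_{\VspOm}$ with $c$ uniform in $s\in[s_\star,1)$, which is what you in fact argue. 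Your claim that the two are ``equivalent'' is not quite right---the seminorm version implies the norm version but not conversely---so you should simply say you prove the stronger (and only meaningful) seminorm inequality.
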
	

Let us recall our trace spaces $\cT^{s,p}(\Omega^c)$, which are introduced in \eqref{eq:Tsp_definition}. For $s\in (0,1), 1\le p < \infty$ and $A,B\in \cB(\Omega^c)$ we define 
\begin{align}\label{eq:Tsp_definition-seminorm}
[f,g]_{\cT^{s,p}(A\,|\, B)}^p &:= \iint\limits_{A\, B} \frac{\abs{f(x)-f(y)}^{p-2} (f(x)-f(y)) (g(x)-g(y)) }{  ((\abs{x-y}+d_x+d_y)\wedge 1)^{d+s(p-2)}} \mu_s(\d x) \mu_s(\d y)
\end{align}
with the convention $[g]_{\cT^{s,p}(A\,|\,B)}=[g,g]_{\cT^{s,p}(A\,|\, B)}$. Note that $[f,g]_{\cT^{s,p}(\Omega^c)}=[f,g]_{\cT^{s,p}(\Omega^c\,|\, \Omega^c)}$. We employ standard techniques to prove that these spaces are separable Banach spaces respectively Hilbert spaces if $p=2$. 
	\begin{lemma}\label{lem:banach}
			Let $\Omega$ be an open set. The space $\cT^{s,p}(\Omega^c)$ is a separable Banach space, reflexive for $1<p<\infty$ and in case $p=2$ it is a separable Hilbert space with inner product
			\begin{equation*}
				(u,v)_{\cT^{s,2}(\Omega^c)}= (u,v)_{L^2(\Omega^c,\, \mu_s )} + [u,v]_{\cT^{s,2}(\Omega^c)}^2.
			\end{equation*}
		\end{lemma}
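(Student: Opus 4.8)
The plan is to verify the Banach space axioms by a standard argument, reducing everything to completeness, separability, and (for $p>1$) reflexivity via an isometric embedding into a product of Lebesgue spaces. First I would observe that $\norm{\cdot}_{\cT^{s,p}(\Omega^c)}$ is genuinely a norm: homogeneity and the triangle inequality for the seminorm part follow from the fact that, by the algebraic inequality relating the expression $\abs{a-b}^{p-2}(a-b)(c-d)$ to the product $\abs{a-b}^{p-1}\abs{c-d}$, one has $[g]_{\cT^{s,p}(\Omega^c)}^p = \iint \abs{g(x)-g(y)}^p ((\abs{x-y}+d_x+d_y)\wedge 1)^{-d-s(p-2)}\mu_s(\d x)\mu_s(\d y)$, so that $[\cdot]_{\cT^{s,p}(\Omega^c)}$ is exactly the $L^p$-seminorm of the difference $g(x)-g(y)$ with respect to the (fixed, nonnegative) measure $\rho_s(x,y)\,\mu_s(\d x)\,\mu_s(\d y)$ on $\Omega^c\times\Omega^c$, where $\rho_s(x,y) = ((\abs{x-y}+d_x+d_y)\wedge 1)^{-d-s(p-2)}$. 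Hence Minkowski's inequality in that $L^p$-space gives the triangle inequality, and $\norm{g}_{\cT^{s,p}(\Omega^c)}=0$ forces $g=0$ $\mu_s$-a.e. since $\mu_s$ has strictly positive Lebesgue density on $\Omega^c$; we identify functions agreeing $\mu_s$-a.e., as usual.

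Next I would set up the isometric embedding. Define $J:\cT^{s,p}(\Omega^c)\to L^p(\Omega^c;\mu_s)\times L^p(\Omega^c\times\Omega^c;\,\rho_s\,\mu_s\otimes\mu_s)$ by $Jg = \big(g,\; (x,y)\mapsto g(x)-g(y)\big)$. By the identity above, $\norm{Jg}^p = \norm{g}_{L^p(\Omega^c;\mu_s)}^p + [g]_{\cT^{s,p}(\Omega^c)}^p = \norm{g}_{\cT^{s,p}(\Omega^c)}^p$, so $J$ is a linear isometry onto its image. Both target Lebesgue spaces are Banach, separable (the underlying measure spaces are $\sigma$-finite on a Polish space), and reflexive for $1<p<\infty$; hence so is their product. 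It therefore suffices to show that $J(\cT^{s,p}(\Omega^c))$ is a \emph{closed} subspace: a closed subspace of a separable reflexive Banach space is again separable and reflexive, and closedness plus the isometry gives completeness of $\cT^{s,p}(\Omega^c)$. For closedness, take $g_n$ with $Jg_n\to (G,H)$ in the product norm; then $g_n\to G$ in $L^p(\Omega^c;\mu_s)$, so along a subsequence $g_n\to G$ $\mu_s$-a.e., hence $g_n(x)-g_n(y)\to G(x)-G(y)$ for $(\mu_s\otimes\mu_s)$-a.e. $(x,y)$, and since this subsequence also converges to $H$ in $L^p(\rho_s\,\mu_s\otimes\mu_s)$ we get (again passing to a further subsequence, using that $\rho_s>0$) $H(x,y)=G(x)-G(y)$ a.e.; thus $(G,H)=JG\in J(\cT^{s,p}(\Omega^c))$ and $G\in\cT^{s,p}(\Omega^c)$.

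Finally, for $p=2$ the stated bilinear form $(u,v)_{\cT^{s,2}(\Omega^c)}=(u,v)_{L^2(\Omega^c;\mu_s)}+[u,v]_{\cT^{s,2}(\Omega^c)}^2$ is symmetric and bilinear (when $p=2$ the integrand in \eqref{eq:Tsp_definition-seminorm} is exactly $(f(x)-f(y))(g(x)-g(y))\rho_s(x,y)$), it is nonnegative with $(u,u)_{\cT^{s,2}(\Omega^c)}=\norm{u}_{\cT^{s,2}(\Omega^c)}^2$, and it is positive definite by the norm property above; together with completeness this makes $\cT^{s,2}(\Omega^c)$ a separable Hilbert space. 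I do not expect any serious obstacle here — the only point requiring a little care is the passage to a.e.-convergent subsequences to identify the limit in the second coordinate with a true difference $G(x)-G(y)$, i.e. checking that the image of $J$ is closed; everything else is routine once the seminorm is rewritten as a genuine weighted $L^p$-seminorm of $g(x)-g(y)$.
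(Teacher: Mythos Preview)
Your proposal is correct and follows essentially the same approach as the paper: both arguments rest on the isometric embedding of $\cT^{s,p}(\Omega^c)$ into a product of $L^p$-spaces via $g\mapsto (g,\,(x,y)\mapsto g(x)-g(y))$, then use closedness of the image (the paper absorbs the weights into the functions and lands in unweighted $L^p(\Omega^c)\times L^p(\Omega^c\times\Omega^c)$, whereas you keep the weighted measures, but this is cosmetic). The only organizational difference is that the paper proves completeness directly via Fatou's lemma before invoking the isometry for separability and reflexivity, while you deduce all three at once from closedness of the image; both are standard and equally valid.
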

		\begin{proof}
			To prove completeness, we take a Cauchy sequence $\{u_n\}_n\subset \cT^{s,p}(\Omega^c)$. Then $v_n(x):= u_n(x)\mu_s(x)^{1/p}$ is Cauchy in $L^p(\Omega^c)$ with limit $v\in L^p(\Omega^c)$. Define $u(x):= v(x)\mu_s(x)^{-1/p}$ then $u$ is the limit of $u_n$ w.r.t. $\norm{\cdot}_{L^{p}(\Omega^c;\,\mu_s)}$. Take a subsequence $\{ u_{n_l} \}_l$ converging a.e. to $u$ on $\R^d$. Then, by Fatou's lemma we have
			\begin{align*}
				[u-u_{n_l}]_{\cT^{s,p}(\Omega^c)}^p \le \liminf\limits_{k\to \infty} [u_{n_k}-u_{n_l}]_{\cT^{s,p}(\Omega^c)}^p\to 0  \text{ as } l\to \infty.
			\end{align*}
			Separability follows from the fact that the map $\iota: \cT^{s,p}(\Omega^c)\to L^p(\Omega^c)\times L^p(\Omega^c\times \Omega^c)$
			\begin{equation*}
			\iota:u\mapsto \Bigg(x\mapsto u(x) \mu_s(x)^{1/p},\, (x,y)\mapsto \frac{u(x)-u(y) }{ ((\abs{x-y}+d_x+d_y)\wedge 1)^{d/p+s(p-2)/p}}\mu_s(x)^{1/p}\mu_s(y)^{1/p} \Bigg)
			\end{equation*}
			is an isometric injection. As $\iota(\cT^{s,p}(\Omega^c))$ is closed and since $L^p(\Omega^c)\times L^p(\Omega^c\times \Omega^c)$ is separable, so is $ \cT^{s,p}(\Omega^c)$. In the same manner, as $L^p(\Omega^c)\times L^p(\Omega^c\times \Omega^c)$ is reflexive for $1<p<\infty$ so is $\cT^{s,p}(\Omega^c)$.
		\end{proof}

	The functions from $\cT^{s,p}(\Omega^c)$ have some regularity at the boundary because the weight in the seminorm $[\cdot,\cdot]_{\cT^{s,p}(\Omega^c)}$ becomes $\big((\abs{x-y})\wedge 1\big)^{-d-s(p-2)}$ as $x,y\to \partial \Omega$. Thereby, for sufficiently large $s$ the functions in the trace space $\cT^{s,p}(\Omega^c)$ themselves have a trace onto the boundary $\partial \Omega$. This is a direct consequence of \autoref{th:trace-extension}.
	\begin{corollary}\label{cor:trace_of_trace}
		The space $\cT^{s,p}(\Omega^c)$ is continuously embedded in $W^{s-1/p, p}(\partial \Omega)$ for any $s\in (1/p, 1)$ and $p\in (1,\infty)$. The embedding is surjective. The continuity constant depends only on $\Omega$, $p$ and a lower bound on $s$.
	\end{corollary}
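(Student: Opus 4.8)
The plan is to factor the asserted embedding through the extension operator of \autoref{th:trace-extension} followed by the classical fractional Sobolev trace operator. Fix $p\in(1,\infty)$, a lower bound $s_\star\in(1/p,1)$ and $s\in[s_\star,1)$. Given $g\in\tsfull$, set $u:=\extns(g)\in\VspOm$. Since $[u]_{W^{s,p}(\Omega)}=[u]_{\vs{s}{p}(\Omega\,|\,\Omega)}\le[u]_{\VspOm}$ and $u\in L^p(\Omega)$, the restriction $u|_\Omega$ belongs to $W^{s,p}(\Omega)$ with $\norm{u|_\Omega}_{W^{s,p}(\Omega)}\le\norm{u}_{\VspOm}$, and the fractional trace theorem on bounded Lipschitz domains produces $\gamma(u|_\Omega)\in W^{s-1/p,p}(\partial\Omega)$. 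I would therefore define the trace of $g$ to be $g\mapsto\gamma(\extns(g)|_\Omega)$; its operator norm is at most $\norm{\gamma}\cdot\norm{\extns}$, which by \autoref{th:trace-extension} depends only on $\Omega$, $p$ and $s_\star$, provided the fractional trace theorem is invoked with a constant that is uniform for $s\in[s_\star,1)$.

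Next I would check that this map is the canonical trace, i.e.\ independent of the chosen extension. Let $w\in\VspOmNull$. Then $w$ coincides with the extension by zero of $w|_\Omega$, and splitting $\R^d\times\R^d$ into $\Omega\times\Omega$, the two mixed pieces, and $\Omega^c\times\Omega^c$ (on which $w$ vanishes) gives $[w]_{W^{s,p}(\R^d)}^p\le 2\,[w]_{\VspOm}^p$; hence $w\in W^{s,p}(\R^d)$ and $w=0$ on $\Omega^c$. Since $s>1/p$ and $\Omega$ is Lipschitz, the standard characterisation $W^{s,p}_0(\Omega)=\{v\in W^{s,p}(\Omega)\,|\,\text{the zero extension of }v\text{ lies in }W^{s,p}(\R^d)\}$ forces $w|_\Omega\in W^{s,p}_0(\Omega)$, so $\gamma(w|_\Omega)=0$. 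Applying this to the difference of two $\VspOm$-extensions of $g$ shows that $\gamma(\extns(g)|_\Omega)=\gamma(u|_\Omega)$ for every $u\in\VspOm$ with $u|_{\Omega^c}=g$; in particular the value does not depend on the extension, so the map above is the trace operator onto $\partial\Omega$, and it coincides with the classical trace on $W^{s,p}(\R^d)$.

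For surjectivity, given $h\in W^{s-1/p,p}(\partial\Omega)$ I would take $v\in W^{s,p}(\R^d)$ built by applying a bounded linear right inverse of $\gamma\colon W^{s,p}(\Omega)\to W^{s-1/p,p}(\partial\Omega)$ and then a bounded extension operator $W^{s,p}(\Omega)\to W^{s,p}(\R^d)$, so that $\gamma(v|_\Omega)=h$. From $\norm{v}_{\VspOm}^p=\norm{v}_{L^p(\Omega)}^p+[v]_{\vs{s}{p}(\Omega\,|\,\R^d)}^p\le\norm{v}_{W^{s,p}(\R^d)}^p$ one gets the continuous inclusion $W^{s,p}(\R^d)\hookrightarrow\VspOm$; hence $g:=v|_{\Omega^c}=\trns v\in\tsfull$ by \autoref{th:trace-extension}, and by the previous step its trace equals $\gamma(v|_\Omega)=h$.

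The only ingredient that is not a formal consequence of \autoref{th:trace-extension} and standard facts about $W^{s,p}_0(\Omega)$ on Lipschitz domains is the \emph{robustness} of the classical fractional trace theorem: one needs $\norm{\gamma}_{W^{s,p}(\Omega)\to W^{s-1/p,p}(\partial\Omega)}$ bounded uniformly for $s$ bounded away from $1/p$, and in particular not blowing up as $s\to1^-$, where it should reduce to Gagliardo's theorem $W^{1,p}(\Omega)\to W^{1-1/p,p}(\partial\Omega)$. With the $(1-s)$-normalisation built into our seminorms this can be obtained from the usual flattening-of-the-boundary reduction to the half-space with explicit tracking of the $s$-dependence of the constants (see \cite{Gri85,triebel_1}); this is where the real work lies.
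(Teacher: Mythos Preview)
Your proof is correct and follows the same route as the paper: compose $\extns$ from \autoref{th:trace-extension} with the restriction $\VspOm\hookrightarrow W^{s,p}(\Omega)$ and the classical trace $W^{s,p}(\Omega)\to W^{s-1/p,p}(\partial\Omega)$, and obtain surjectivity by extending boundary data to $W^{s,p}(\R^d)\hookrightarrow\VspOm$ and applying $\trns$. Your additional care about well-definedness (independence of the chosen extension) and about the uniformity in $s$ of the classical trace constant are valid refinements that the paper leaves implicit.
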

	\begin{proof}
		By \autoref{th:trace-extension}, the extension $\extns: \cT^{s,p}(\Omega^c)\to \VspOm$ is continuous and the continuity constant $c_1>0$ depends only on $\Omega$, $p$ and a lower bound on $s$. The space $\VspOm$ is embedded in $W^{s,p}(\Omega)$ with the embedding constant depending only on a lower bound on $s$. The result follows from the classical trace result $W^{s,p}(\Omega)\to W^{s-1/p, p}(\partial \Omega)$. The embedding is surjective since we can extend a function from $W^{s-1/p,p}(\partial \Omega)$ to an element from $W^{s,p}(\R^d) \hookrightarrow V^{s,p}(\Omega\,|\, \R^d)\hookrightarrow \cT^{s,p}(\Omega^c)$.
	\end{proof}

	\section{Trace Theorem}\label{sec:tracetheorem}
	The aim of this section is to prove the trace part results of \autoref{th:trace-extension} and \autoref{th:trace-p-1}. This proof is carried out in \autoref{prop:trace_Lp_estimate} and \autoref{prop:trace_seminorm_estimate}. Essential building blocks in the respective proofs are an approximation to the classical $L^p$-trace embedding in  \autoref{th:trace_classical_approximation_inequality_1} and for $p=1$ a Hardy-type inequality provided in \autoref{th:Hardy}. On a more technical level, we use upper and lower bounds of the distance function, see \autoref{lem:distance_upper} and \autoref{lem:distance_lower}. 
	
	\medskip
	
	To prove \autoref{th:trace_classical_approximation_inequality_1} we apply techniques developed in \cite{JoWa84}. In particular, we use the interpolation between Bessel potential spaces on $\R^d$. For this reason we need a Sobolev extension operator for fractional Sobolev spaces $W^{s,p}(\Omega)$ whose continuity constant is independent of $s$. The existence of such an extension is well known in the literature. We provide this result in the following theorem for the convenience of the reader.
	
	\begin{theorem}[{\cite[Chapter VI.2 Theorem 3]{JoWa84}, \cite{Tri95}}]\label{th:sobolev_extension}
	
	Let $\Omega\subset \R^d$ be a connected Lipschitz domain. There exists a linear map $E$, which extends measurable functions $f:\Omega \to \R$ such that $E:L^p(\Omega)\to L^p(\R^d)$ for all $p \geq 1$ and, with some constant $C=C(d,\Omega, p)>0$, for any $0 < s \leq 1$
		\begin{equation*}
			\norm{Ef}_{W^{s,p}(\R^d)}\le C\, \norm{f}_{W^{s,p}(\Omega)}.
		\end{equation*}
	\end{theorem}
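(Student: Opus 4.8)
The plan is to take $E$ to be a single Stein-type universal extension operator --- one linear map, independent of both $s$ and $p$ --- and to deduce the fractional-order estimate from the integer-order estimates at differentiability orders $0$ and $1$; this is essentially the route of \cite{JoWa84,Tri95}. First I would localize: since $\partial\Omega$ is compact, the uniform Lipschitz condition provides finitely many charts $B_r(z_1),\dots,B_r(z_N)$ covering $\partial\Omega$, and adding one open set $U_0$ with $\overline{U_0}\subset\Omega$ yields an open cover of $\overline{\Omega}$; fix a subordinate smooth partition of unity $\{\eta_j\}_{j=0}^N$ with $\sum_j\eta_j\equiv1$ near $\overline{\Omega}$. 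It then suffices to extend each $\eta_j f$ separately: $\eta_0 f$ is extended by zero, and for $j\ge1$, after the rigid motion $T_{z_j}$ the relevant part of $\Omega$ becomes the subgraph $\{(x',x_d)\,|\,x_d<\phi_{z_j}(x')\}$ of a Lipschitz function of constant $\le L$. One ingredient used here is that multiplication by a fixed function in $C^\infty_c(\R^d)$ is bounded on $W^{s,p}$ with constant independent of $s$, which is elementary from \eqref{eq:V-seminorm-def}: on the (bounded) support of the cut-off the only a priori $s$-singular part of the $(1-s)$-weighted double integral is the contribution of $\{|x-y|>1\}$, and that is dominated by $\norm{\cdot}_{L^p}$.

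On a special Lipschitz domain $U=\{x_d<\phi(x')\}$ I would then use Stein's reflection: with a regularized distance $\delta^\ast\in C^\infty(\overline{U}^c)$ comparable to $\dist(\cdot,\partial U)$ and a rapidly decaying kernel $\psi$ on $(1,\infty)$ normalized by $\int\limits_1^\infty\psi=1$ and $\int\limits_1^\infty t^k\psi(t)\,\d t=0$ for all $k\ge1$, set
\begin{equation*}
Ef(x',x_d)=\int\limits_1^\infty f\big(x',\,x_d-t\,\delta^\ast(x',x_d)\big)\,\psi(t)\,\d t,\qquad x_d>\phi(x'),
\end{equation*}
and $Ef=f$ on $U$. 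This defines one linear operator, not depending on $s$ or $p$, bounded $W^{k,p}(U)\to W^{k,p}(\R^d)$ for every integer $k\ge0$ and every $p\in[1,\infty]$, with norm depending only on $d$, $k$, $p$ and $L$. Gluing the pieces back through the partition of unity and transporting by the maps $T_{z_j}^{-1}$ produces the global operator $E$, bounded $L^p(\Omega)\to L^p(\R^d)$ and $W^{1,p}(\Omega)\to W^{1,p}(\R^d)$ with constants $C(d,\Omega,p)$; the latter bound is precisely the case $s=1$ of the theorem.

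For $0<s<1$ I would invoke the real-interpolation identity $W^{s,p}=(L^p,W^{1,p})_{s,p}$ --- valid both on $\Omega$ and on $\R^d$ --- and apply the interpolation property to the pair of bounded operators just obtained. The step I expect to be the main obstacle is keeping the resulting constant under control as $s\to1^-$: one must know that the equivalence between the $K$-functional norm of $(L^p,W^{1,p})_{s,p}$ and the Gagliardo-type norm of $W^{s,p}$ does not degenerate in that limit, which is exactly why the seminorm \eqref{eq:V-seminorm-def} carries the weight $(1-s)$ --- the same robust interpolation fact that underlies the convergence $\VspOm\to W^{1,p}(\Omega)$ as $s\to1^-$ recalled in the introduction. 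A self-contained alternative, avoiding abstract interpolation, is to estimate $[Ef]_{W^{s,p}(\R^d)}$ directly: split $\R^d\times\R^d$ according to $\Omega$ and $\Omega^c$, bound the $\Omega\times\Omega$ part by $[f]_{W^{s,p}(\Omega)}$, and control the remaining off-boundary contributions on the Whitney cubes of $\overline{U}^c$ against $\norm{f}_{W^{s,p}(U)}$, as done in \cite[Chapter VI.2]{JoWa84}; this makes the $s$-independence of the constants transparent.
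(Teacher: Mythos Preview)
Your proposal is correct and follows essentially the same strategy as the paper: construct a single linear extension operator independent of $s$ and $p$, verify boundedness at the integer endpoints $L^p\to L^p$ and $W^{1,p}\to W^{1,p}$, and then pass to $0<s<1$ by real interpolation $(L^p,W^{1,p})_{s,p}=W^{s,p}$. The paper does not give a detailed proof but records exactly this outline in the remark following the theorem.

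The one difference is the choice of $E$: the paper invokes the Jonsson--Wallin construction (Whitney decomposition of $\overline{\Omega}^c$, smooth partition of unity, and copying \emph{mean values} of $f$ from inside $\Omega$ to the exterior Whitney cubes), whereas you use Stein's reflection with regularized distance. Both are universal extension operators bounded on all $W^{k,p}$, so either feeds into the interpolation step equally well; your ``self-contained alternative'' via direct Whitney estimates is in fact precisely the route taken in \cite[Chapter VI.2]{JoWa84} that the paper cites.
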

	\begin{remark}
		The extension is constructed via a Whitney decomposition of $\overline{\Omega}^c$, a smooth partition of unity and copying mean values of $f$ from inside to respective Whitney cubes. The construction of the extension $Ef$ is independent of $s,p$ and satisfies $E: W^{1,p}(\Omega)\to W^{1,p}(\R^d)$. Real interpolation allows to choose the constant $C(d,\Omega, p)$ in the theorem independent of $s$.
	\end{remark}
	
	Analogous to the measure $\mu_s$ from \eqref{eq:def_mu_s}, we define for $s\in (0,1)$ the measure 
	\begin{equation}\label{eq:tau_s_trace}
		\tau_{s}(\d x)= \frac{1-s}{d_x^s}\1_{\Omega}(x) \d x 
	\end{equation}
	on the $\sigma$-algebra $\cB(\R^d)$. Recall that $d_x= \distw{x}{\partial\Omega}$. The measure $\tau_{s}(\d x)$ plays the same role as $\mu_s$ but is supported inside $\Omega$. We use it in \autoref{th:trace_classical_approximation_inequality_1} for the proof of the trace part of our main theorems, see also \autoref{prop:trace_Lp_estimate} and \autoref{prop:trace_seminorm_estimate}. In contrast to $\mu_s$ the measure $\tau_s$ does not need the additional term $(1+d_x)^{-d-s(p-1)}$ for the decay at infinity since the open set $\Omega$ is assumed to be bounded throughout this work. The following lemma shows how balls scale under $\tau_s$. This scaling plays a crucial role in \autoref{th:trace_classical_approximation_inequality_1}.

	\begin{lemma}\label{lem:estimates_balls_tau_s}
		Let $\Omega\subset \R^d$ be a bounded Lipschitz domain with a localization radius $r_0>0$. There exists a constant $C=C(d,\Omega)>0$ such that for any $s\in (0,1)$, $0<r\le r_0/2$, and $x\in \Omega$
		\begin{equation}\label{eq:doubling_balls_tau_s}
			\tau_s(B_r(x))\le C\, r^{d-s}.
		\end{equation}
	\end{lemma}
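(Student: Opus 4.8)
The plan is to bound $\tau_s(B_r(x))$ by splitting the ball into dyadic annuli according to the distance to $\partial\Omega$ and using that each annulus contributes a factor roughly $(1-s)(2^{-k}r)^{-s}\cdot 2^{-k}r\cdot (\text{cross-sectional volume})$, the key point being that the sum over $k$ of the geometric-type series produces a factor $(1-s)^{-1}$ that cancels the $(1-s)$ in the definition of $\tau_s$. Concretely, first I would reduce to the case where $x$ lies within distance $r_0/2$ of $\partial\Omega$: if $\dist(x,\partial\Omega)\ge r_0/2$ is much larger than $r$, then on $B_r(x)$ we have $d_y\gtrsim r_0$, so $d_y^{-s}\le (r_0/4)^{-s}\le \max(1,(r_0/4)^{-1})$ uniformly in $s$, and hence $\tau_s(B_r(x))\le (1-s)\,C(r_0)\,|B_r|\lesssim r^d\le r_0^{s}r^{d-s}$, which is of the desired form (here $r\le r_0/2$ keeps $r^{d}\lesssim r^{d-s}$ with a constant depending only on $r_0$). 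So from now on assume $d_x\le r_0$ (more precisely $d_x \le r$ up to a harmless enlargement of the constant, or handle $d_x \in (r, r_0]$ by noting $d_y \ge d_x - r$ could degenerate — better to simply treat all $x$ with $d_x \le r_0$ at once via the annular decomposition below).

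\textbf{Main estimate.} For $k\ge 0$ set $A_k := \{ y\in B_r(x)\cap\Omega \mid 2^{-k-1}r < d_y \le 2^{-k}r \}$, and let $A_\infty := \{y \in B_r(x)\cap\Omega \mid d_y \le$ (the smallest relevant scale)$\}$ — but in fact the annuli already exhaust $B_r(x)\cap\Omega$ up to the set $\{d_y = 0\}$, which is Lebesgue-null, so $B_r(x)\cap\Omega = \bigcup_{k\ge 0} A_k$ up to null sets. On $A_k$ we have $d_y^{-s} \le (2^{-k-1}r)^{-s} = 2^{(k+1)s} r^{-s}$. The crucial geometric input is the volume bound $|A_k| \le C(d,\Omega)\, 2^{-k} r \cdot r^{d-1} = C\, 2^{-k} r^{d}$: since $\Omega$ is a bounded Lipschitz domain, the set of points in $B_r(x)$ at distance $\le \delta$ from $\partial\Omega$ has measure $\lesssim \delta\, r^{d-1}$ for $\delta\le r\le r_0/2$ (flatten the boundary in each of the finitely many local charts covering $\partial\Omega\cap B_{2r}(x)$ using the Lipschitz graph representation; the number of charts needed is bounded since $r\le r_0/2$). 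Combining,
\begin{align*}
\tau_s(B_r(x)) = (1-s)\sum_{k\ge 0}\int_{A_k} d_y^{-s}\,\d y \le (1-s)\sum_{k\ge 0} 2^{(k+1)s} r^{-s}\cdot C\,2^{-k} r^{d} = C\,2^{s}(1-s)\, r^{d-s}\sum_{k\ge 0} 2^{-k(1-s)}.
\end{align*}
Finally $\sum_{k\ge 0} 2^{-k(1-s)} = (1-2^{-(1-s)})^{-1}$, and since $1-2^{-t} \ge t\log 2\cdot (1 - \tfrac{t\log 2}{2}) \ge \tfrac{t\log 2}{2}$ for $t\in(0,1)$ (or simply $1-2^{-t}\ge c\,t$ for $t\in[0,1]$ with an absolute $c>0$), we get $(1-s)\sum_{k\ge 0}2^{-k(1-s)} \le \tfrac{2}{\log 2}$, a universal constant. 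Hence $\tau_s(B_r(x)) \le C(d,\Omega)\, r^{d-s}$ with $C$ independent of $s$, which is \eqref{eq:doubling_balls_tau_s}.

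\textbf{Main obstacle.} The only genuinely nontrivial step is the volume bound $|\{y\in B_r(x) \mid d_y \le \delta\}| \le C(d,\Omega)\,\delta\, r^{d-1}$, uniform over $x$ and over $r\le r_0/2$; this is where the uniform Lipschitz character of $\partial\Omega$ (localization radius $r_0$, Lipschitz constant $L$) enters. I would prove it by covering $\partial\Omega\cap B_{2r}(x)$ by boundedly many balls $B_{r_0}(z_i)$, $z_i\in\partial\Omega$, in each of which $\partial\Omega$ is the graph $\{x_d = \phi_{z_i}(x')\}$ of an $L$-Lipschitz function after a rigid motion; the tube $\{|x_d - \phi_{z_i}(x')| \lesssim \delta\}$ around such a graph, intersected with a ball of radius $\lesssim r$, has volume $\lesssim_L \delta\, r^{d-1}$. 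The bookkeeping — making the number of charts uniform and absorbing the $L$-dependence into $C(d,\Omega)$ — is routine but must be done with the boundedness of $\Omega$ and $r\le r_0/2$ in hand. (For $x$ with $d_x > 2r$ the set $\{y\in B_r(x): d_y\le\delta\}$ is empty once $\delta<r$, so there the bound is trivial, consistent with the reduction in the first paragraph.)
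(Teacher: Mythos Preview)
Your approach is correct in spirit and genuinely different from the paper's. The paper argues by a clean two-case split: if $r\le d_x$ it uses $d_y\ge r-|x-y|$ and integrates in polar coordinates; if $r>d_x$ it flattens the boundary via a single Lipschitz chart and integrates the vertical distance directly, obtaining $(1-s)\int_0^{Cr} t^{-s}\,\d t \cdot r^{d-1}\asymp r^{d-s}$ in one line. Your dyadic-annulus argument instead packages the same geometry into the tubular-neighborhood volume bound $|\{y\in B_r(x):d_y\le\delta\}|\lesssim_\Omega \delta\, r^{d-1}$ and a geometric series in $2^{-k(1-s)}$. This is a valid and more modular alternative; the paper's direct computation is shorter because it never isolates the tubular-neighborhood estimate as a separate step.

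There is, however, a small gap. Your annuli $A_k=\{y\in B_r(x)\cap\Omega:2^{-k-1}r<d_y\le 2^{-k}r\}$ only cover $\{0<d_y\le r\}$, not all of $B_r(x)\cap\Omega$: since $d_y$ ranges up to $d_x+r>r$ on $B_r(x)$, the set $\{y\in B_r(x)\cap\Omega:d_y>r\}$ is generically nonempty, so the claim ``the annuli already exhaust $B_r(x)\cap\Omega$ up to null sets'' is false. The fix is trivial --- on that leftover set $d_y^{-s}<r^{-s}$, giving contribution at most $(1-s)r^{-s}|B_r|\le c_d\,r^{d-s}$ --- but it should be stated. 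Once you include this piece, your reduction paragraph becomes unnecessary; note also that the claim there, ``$d_y\gtrsim r_0$ when $d_x\ge r_0/2$'', is itself false (take $d_x=r=r_0/2$, then $d_y$ can be arbitrarily small on $B_r(x)$).
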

	\begin{proof} Let $d\ge 2$. If $r\le d_x$, \ie $B_r(x)\subset\subset \Omega$, then $d_y\ge r-\abs{x-y}$ for any $y\in B_r(x)$ and, thus, 
		\begin{align*}
			\tau_s(B_r(x))= \int\limits_{B_r(x)} \frac{1-s}{d_y^s}\d y\le \int\limits_{B_r(x)} \frac{1-s}{(r-\abs{x-y})^s}\d y = \omega_{d-1} \int\limits_{0}^r \frac{1-s}{(r-t)^s} t^{d-1} \d t \le \omega_{d-1} r^{d-s}.
		\end{align*}
		Now we consider the case $r>d_x$, \ie $B_r(x)\cap \partial \Omega\neq \emptyset$. Without loss of generality we assume that $0\in \partial \Omega$ is a minimizer of $d_x$. Since $\Omega$ is a Lipschitz domain, we find a Lipschitz map $\phi:\R^{d-1}\to \R$ such that $\Omega \cap B_{r_0}= \{ (y',y_d)\in B_{r_0}\,|\, y_d<\phi(y') \}$. The Lipschitz constant of $\phi$ is bounded by $L>0$ independent of $x$. A simple calculation yields for any $y=(y',y_d)\in B_{r}(x)\cap \Omega$
		\begin{align}\label{eq:distance_estimate}
			\abs{y}&\le \abs{x}+\abs{y-x}\le 2r,\nonumber\\
			\abs{y_d}&\le \inf\limits_{ (\tilde{y}', \phi(\tilde{y}'))\in B_{r_0} }\abs{y_d-\phi(\tilde{y}')} + \abs{\phi(y')-\phi(\tilde{y}')}\nonumber\\
			&\le 2^{1/2}(1+L) \inf\limits_{ (\tilde{y}', \phi(\tilde{y}'))\in B_{r_0} } \abs{y-(\tilde{y}',\phi(\tilde{y}'))}= 2^{1/2}(1+L) d_y.
		\end{align}
		In the case that the minimizer of $d_y$ is not in graph of $\phi$, we simply pick a smaller $r_0$ depending only on the constant $L$. Therefore, 
		\begin{align}\label{eq:distance_integral_estimate}
			\begin{split}
			\tau_s(B_r(x))&\le 2^{s/2}(1+L)^{s} \int\limits_{B_{2r} \cap \{ y_d<\phi(y') \}} \frac{1-s}{\abs{y_d-\phi(y')}^s}\d (y',y_d)\\
			&\le 2(1+L)^s \omega_{d-2}(2r)^{d-1} \int\limits_{0}^{(2+L)r} \frac{1-s}{y_d^s}\d y_d\le 2^{d+1}(2+L)\omega_{d-2} r^{d-s}.
		\end{split}
		\end{align}
		The proof in case of $d=1$ is straightforward. Note that similar arguments as in this proof are employed in the proof of \autoref{lem:estimate_a_{Q,s}}.
	\end{proof}
		
	In the proof of \autoref{th:trace_classical_approximation_inequality_1} we use interpolation results, which we explain now. Let $G_\alpha$, $\alpha>0$ be the Bessel potential kernel. We introduce the Bessel potential spaces \begin{equation}\label{eq:bessel_potential_spaces}
		H^{\alpha,p}(\R^d):=\{ g:\R^d\to \R \,|\, \exists f\in L^p(\R^d): g= G_\alpha \ast f \}
	\end{equation} with the canonical norm $\norm{g}_{H^{\alpha,p}(\R^d)}:= \norm{f}_{L^p(\R^d)}$ if $g\in H^{\alpha,p}(\R^d)$ and $g=G_\alpha \ast f$. We recall the real interpolation result 
	\begin{equation}\label{eq:interpolation}
		[H^{\alpha_0,p}(\R^d), H^{\alpha_1,p}(\R^d)]_{p\theta}= W^{s,p}(\R^d)
	\end{equation}
	where $0<\alpha_0<\alpha_1$, $\theta\in (0,1)$, $s=(1-\theta) \alpha_0 + \theta \alpha_1$ and $p\ge 1$, see \eg \cite[Theorem 6.2.4]{BeLo76}. Analogous to \cite[Chapter V]{JoWa84}, we calculate bounds on the Bessel potential kernel $G_{\alpha_i}$ for some $0<\alpha_0<s<\alpha_1$, see \autoref{lem:help_trace_C}, and prove an approximate trace result inside $\Omega$, see \autoref{th:trace_classical_approximation_inequality_1}. 

	The following lemma is a slight modification of \cite[Lemma C]{JoWa84} that fits our setting. The well-known estimates of the Bessel potential kernel, it's gradient and decay at infinity is crucial in the proof. For more details on the Bessel potential we refer to \cite[Chapter IV]{Tai64}. In particular, we need to pay attention to the constants and their dependencies. 
	\begin{lemma}[{\cite[Chapter V Lemma C]{JoWa84}}]\label{lem:help_trace_C}
		Let $\Omega\subset \R^d$ be a bounded connected Lipschitz domain, $0<s_\star\le s<1$ and $1<p_\star\le p\le p^\star<\infty$. We set 
		\begin{equation}\label{eq:alpha_i}
			\alpha_0:= s\frac{1+p}{2p}, \quad \alpha_1:= 1+\frac{s}{2p}.
		\end{equation}
		and $\beta_i:= \alpha_i - s/p$ for $i \in \{0,1\}$. There exists a constant $C=C(d,\Omega, p_\star,p^\star, s_\star)>0$ such that for all $0<r\le r_0/2$ and $f\in L^p(\R^d)$ we have
		\begin{align}
			\iint\limits_{ \substack{ \Omega\times \Omega \\ \abs{x-y}<r } } \abs{G_{\alpha_i} \ast f(x) - G_{\alpha_i} \ast f(y)}^p \tau_s(\d y)\tau_s(\d x)&\le C r^{p\beta_i} \norm{f}_{L^p(\R^d)}^p,\label{eq:help_trace_C_1}\\
			\int\limits_{\Omega} \abs{G_{\alpha_i} \ast f(x)}^p \tau_s(\d x)&\le C\, \norm{f}_{L^p(\R^d)}^p.\label{eq:help_trace_C_2} 
		\end{align}
	\end{lemma}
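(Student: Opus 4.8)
The plan is to follow the strategy of \cite[Chapter V Lemma C]{JoWa84}, adapting each estimate to the measure $\tau_s$ and keeping careful track of the dependence of all constants on $d, \Omega, p_\star, p^\star, s_\star$. The key inputs are the classical pointwise bounds on the Bessel kernel: $G_\alpha(x) \lesssim |x|^{\alpha - d}$ for $|x| \leq 1$ together with exponential decay $G_\alpha(x) \lesssim e^{-|x|/2}$ for $|x| \geq 1$, and the gradient bound $|\nabla G_\alpha(x)| \lesssim |x|^{\alpha - d - 1}$ for $|x| \leq 1$ with the same exponential decay for $|x| \geq 1$. Since $\alpha_0 = s(1+p)/(2p) \in (s/(2p_\star), 1)$ and $\alpha_1 = 1 + s/(2p) \in (1, 1 + 1/(2p_\star))$ lie in compact subsets of $(0,\infty)$ bounded away from $0$ and from the critical exponents, the implied constants in these kernel bounds are uniform. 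Note $\beta_0 = \alpha_0 - s/p = s(1-p)/(2p) < 0$ and $\beta_1 = \alpha_1 - s/p = 1 - s/(2p) > 0$, so the two cases $i=0$ and $i=1$ require genuinely different arguments.

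For \eqref{eq:help_trace_C_2}, I would write $G_{\alpha_i} \ast f(x) = \int_{\R^d} G_{\alpha_i}(x-z) f(z)\, \d z$, apply Minkowski's integral inequality in the form $\|G_{\alpha_i}\ast f\|_{L^p(\Omega;\tau_s)} \leq \int_{\R^d} |f(z)| \, \|G_{\alpha_i}(\cdot - z)\|_{L^p(\Omega;\tau_s)}\, \d z$ — but this is not quite strong enough, so instead one uses the convolution structure more carefully, splitting $G_{\alpha_i}$ into its near part $G_{\alpha_i}\1_{B_1}$ and far part $G_{\alpha_i}\1_{B_1^c}$. For the far part, $L^1$-boundedness of the kernel and $\tau_s(\Omega) \lesssim 1$ (which follows from \autoref{lem:estimates_balls_tau_s} by covering $\Omega$ with finitely many balls of radius $r_0/2$) give the bound by Young's inequality. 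For the near part, one uses that $\tau_s(B_r(x)) \leq C r^{d-s}$ from \autoref{lem:estimates_balls_tau_s} to show $\int_\Omega |x-z|^{(\alpha_i - d)p}\,\tau_s(\d x)$ is finite and controlled: decomposing dyadically in $|x-z|$ and summing $\sum_k 2^{k(\alpha_i-d)p} 2^{-k(d-s)}$, which converges precisely because $\alpha_0 p > 0$ resp.\ because one can also absorb into an $L^1$ estimate when $\alpha_1 p - (d-s) $ may be positive — here a Schur-test / interpolation between $L^1$ and the dyadic bound handles both ranges.

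For \eqref{eq:help_trace_C_1}, the case $i=0$ ($\beta_0 < 0$): since we integrate over $|x-y| < r \leq r_0/2 \leq 1$, one crudely bounds $|G_{\alpha_0}\ast f(x) - G_{\alpha_0}\ast f(y)|^p \leq 2^{p-1}(|G_{\alpha_0}\ast f(x)|^p + |G_{\alpha_0}\ast f(y)|^p)$ and then, for fixed $x$, estimates $\tau_s(B_r(x)) \leq C r^{d-s}$; combined with \eqref{eq:help_trace_C_2} this gives $C r^{d-s}\|f\|_{L^p}^p$, and one checks $d - s \geq p\beta_0$ is false in general — so instead one must be smarter: write $G_{\alpha_0} = G_{\alpha_0} \ast G_{s/p} \ast (\text{something})$? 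No — the cleaner route, and the one I'd take, is to use the representation $G_{\alpha_0}\ast f = G_{\beta_0}\ast(G_{s/p}\ast f)$ when $\beta_0 > 0$, but since $\beta_0<0$ here one instead splits $f$ itself or uses that for $i=0$ one does \emph{not} need a power of $r$ with positive exponent matching — rechecking, $p\beta_0 = s(1-p)/2 < 0$, so $r^{p\beta_0} \to \infty$ as $r \to 0$, meaning \eqref{eq:help_trace_C_1} for $i=0$ is a \emph{weak} statement and the crude bound $C r^{d-s}\|f\|_{L^p}^p \leq C r^{p\beta_0}\|f\|_{L^p}^p$ holds as soon as $d - s \geq p\beta_0$, which is always true since the left side is positive and the right side negative. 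So $i=0$ follows directly from \eqref{eq:help_trace_C_2} plus the ball estimate.

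The case $i=1$ ($\beta_1 > 0$) is the main obstacle. Here one genuinely needs the smoothing: since $\alpha_1 > 1$, the function $G_{\alpha_1}\ast f$ is Lipschitz-like with $\|\nabla(G_{\alpha_1}\ast f)\|$ controlled, but one cannot afford a full gradient bound uniformly in $f \in L^p$. Instead I would use the fundamental-theorem-of-calculus / Poincaré-type argument from \cite{JoWa84}: for $|x-y| < r$ write $G_{\alpha_1}\ast f(x) - G_{\alpha_1}\ast f(y) = \int_{\R^d}(G_{\alpha_1}(x-z) - G_{\alpha_1}(y-z)) f(z)\,\d z$ and split the $z$-integral according to whether $|x-z| \lesssim r$ or not. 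For $|x-z| \gtrsim r$ one uses the mean value bound $|G_{\alpha_1}(x-z) - G_{\alpha_1}(y-z)| \leq |x-y| \sup_{\xi \in [x,y]} |\nabla G_{\alpha_1}(\xi - z)| \lesssim r |x-z|^{\alpha_1 - d - 1}$ (plus exponential decay for large $|x-z|$); for $|x-z| \lesssim r$ one bounds each of the two kernel terms separately by $|x-z|^{\alpha_1-d}$ and $|y-z|^{\alpha_1-d}$. Then one applies Minkowski's inequality in $L^p(\tau_s(\d x))$ combined with an $L^1$-bound in $z$ against $f$ (Young's inequality), using repeatedly that $\int_{B_\rho(x)} |x-z|^{(\alpha_1-d)p'}\cdots$ type quantities and $\tau_s$-measures of balls scale like stated in \autoref{lem:estimates_balls_tau_s}; the dyadic sums $\sum_{2^k \gtrsim r} (r\, 2^{k(\alpha_1-d-1)})^p 2^{-k(d-s)}$ and $\sum_{2^k \lesssim r} 2^{k(\alpha_1-d)p} 2^{-k(d-s)}$ both evaluate to $C r^{p\beta_1}$ — the first because $\alpha_1 - d - 1 - (d-s)/p < -1/p \cdot(\text{positive})$ makes it a geometric tail at scale $r$, the second because $\alpha_1 p - (d-s) = p\beta_1 > 0$ makes it a geometric sum dominated by its largest term at scale $r$. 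Bookkeeping the exponents so that both sums land on exactly $r^{p\beta_1}$, and verifying every constant depends only on the stated quantities (using that $\alpha_1$ stays in a compact subinterval determined by $p_\star, p^\star, s_\star$), is the technical heart; the exponential decay of $G_{\alpha_1}$ and $\nabla G_{\alpha_1}$ at infinity together with boundedness of $\Omega$ disposes of the far-field contributions uniformly.
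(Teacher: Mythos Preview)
Your computation of $\beta_0$ contains a sign error: $\beta_0 = \alpha_0 - s/p = s(1+p)/(2p) - s/p = s(p-1)/(2p)$, which is \emph{positive} for $p>1$, not negative. Consequently your claim that \eqref{eq:help_trace_C_1} for $i=0$ is a ``weak statement'' with $r^{p\beta_0}\to\infty$ as $r\to 0$ is false; in fact $r^{p\beta_0}\to 0$, and the crude bound via the triangle inequality and \eqref{eq:help_trace_C_2}, which produces only a factor $r^{d-s}$, does \emph{not} in general dominate $r^{p\beta_0} = r^{s(p-1)/2}$ (take e.g.\ $d=1$, $s$ close to $1$, $p$ close to $p^\star$ large). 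Both cases $i\in\{0,1\}$ therefore require the genuine smoothing argument, and your proposed shortcut for $i=0$ fails.

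By contrast, the paper does not reproduce the Jonsson--Wallin machinery at all. It simply invokes \cite[Chapter~V Lemma~C]{JoWa84}, which already establishes the two inequalities for any measure satisfying the ball estimate \eqref{eq:doubling_balls_tau_s} under the hypotheses $0<\beta_i<1$ and $\alpha_i\ne d$, with a constant depending only on the constant in \eqref{eq:doubling_balls_tau_s}, on two-sided bounds for $\beta_i$ inside $(0,1)$, and on a lower bound for $|d-\alpha_i|$. The actual content of the proof is then the explicit verification that, with the choices \eqref{eq:alpha_i}, one has $s_\star(p_\star-1)/(2p_\star)\le\beta_0\le 1/2$, $1-1/p_\star<\beta_1\le 1-s_\star/(2p^\star)$, and uniform lower bounds on $|d-\alpha_0|$ and $|d-\alpha_1|$ in terms of $d,p_\star,p^\star,s_\star$. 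Your sketch of the $i=1$ case is indeed the skeleton of the Jonsson--Wallin argument and is broadly correct, but redoing it is unnecessary; the substance of the lemma is the constant-tracking, and for that one needs precisely the parameter bounds just listed rather than a fresh proof of the kernel estimates.
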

	\begin{proof}
		In \cite[Chapter V Lemma C]{JoWa84} the statement is proven for doubling measures satisfying \eqref{eq:doubling_balls_tau_s} under the assumption $0<\beta_i<1$ and $0<\alpha_i \ne d$. The proof uses estimates of the Bessel potential kernel $G_{\alpha}$, see \cite[Chapter V Lemma 1]{JoWa84}, \cite[Chapter V Lemma A]{JoWa84} and \cite[Chapter V Lemma B]{JoWa84}. Carefully inspecting the proof of \cite[Chapter V Lemma C]{JoWa84}, we find that the resulting constant depends on the constant $C$ from \eqref{eq:doubling_balls_tau_s}, a lower bound $0<\beta_{i,\star}\le \beta_i$ and an upper bound $\beta_i\le \beta_i^\star<1$ as well as a lower bound on $\abs{d-\alpha_i}$. We calculate
		\begin{align*}
			0<s_\star \frac{p_\star-1}{2p_\star}\le s\frac{p-1}{2p} &=\beta_0\le \frac{1}{2} <1,\\
			0<1-\frac{1}{p_\star}< 1-\frac{s}{2p}&= \beta_1 \le 1-\frac{s_\star}{p^\star}<1.
		\end{align*}
		Furthermore, we have
		\begin{equation*}
		\abs{d-\alpha_0}= d-s\frac{1+p}{2p}\ge (d-1)+ \frac{p-1}{2p}\ge \frac{p_\star-1}{2p_\star}> 0.
		\end{equation*}
		and
		\begin{align*}
			\abs{d-\alpha_1}=\begin{cases}
				d-1-\frac{s}{2p}\ge 1-\frac{1}{2p_\star}>0 &, d\ge 2\\
				\frac{s}{2p}\ge \frac{s_\star}{2p^\star}>0&, d=1.
			\end{cases}
		\end{align*}
		This yields the estimates with dependencies of the constants as claimed.
	\end{proof}
	\begin{theorem}[Approximate trace inequality]\label{th:trace_classical_approximation_inequality_1}
		Let $\Omega \subset \R^d$ be a bounded Lipschitz domain, $1<p_\star< p^\star<\infty$ and $s_\star\in(0,1)$ there exists a constant $C=C(d,\Omega,p_\star,p^\star, s_\star)>0$ such that for every $s\in (s_\star, 1)$, $p_\star\le p\le p^\star$ and $u\in W^{s,p}(\Omega)$
		\begin{align}
			\int\limits_{\Omega} \abs{u(x)}^p \tau_s(\d x) + \int\limits_{\Omega}\int\limits_{ \Omega} \frac{\abs{u(x)-u(y)}^p}{((\abs{x-y}+d_x+d_y)\wedge 1)^{d+s(p-2)}}\tau_s(\d y)\tau_s(\d x) &\le C\, \norm{u}_{W^{s,p}(\Omega)}^p\label{eq:approx_trace_inequality}.
		\end{align}
	\end{theorem}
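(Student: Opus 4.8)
The plan is to follow the scheme of \cite[Chapter~V]{JoWa84}: reduce to a connected domain, extend $u$ to $\R^d$ with a constant independent of $s$, split the extension by real interpolation between the two Bessel potential scales $H^{\alpha_0,p}(\R^d)$ and $H^{\alpha_1,p}(\R^d)$ from \eqref{eq:alpha_i}, and feed the two pieces into the estimates of \autoref{lem:help_trace_C}.

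\emph{Step 1 (reductions).} A bounded Lipschitz domain has finitely many connected components $\Omega_1,\dots,\Omega_N$, each itself a bounded connected Lipschitz domain, with $\delta_0:=\min_{i\neq j}\distw{\Omega_i}{\Omega_j}>0$. For $x\in\Omega_i$ one has $d_x=\distw{x}{\partial\Omega_i}$ whenever $d_x\le\delta_0$, and $d_x\ge\delta_0$ otherwise, so on each component $\tau_s$ and the analogous measure attached to $\partial\Omega_i$ either coincide or differ by a factor bounded uniformly in $s$. Splitting the two integrals on the left-hand side of \eqref{eq:approx_trace_inequality} along the components, the contributions of pairs $(x,y)\in\Omega_i\times\Omega_i$ reduce to the connected case, while for $i\neq j$ we have $|x-y|\ge\delta_0$, so the weight $\big((|x-y|+d_x+d_y)\wedge1\big)^{-(d+s(p-2))}$ is bounded (here $0<d+s(p-2)<d+p^\star$), and by $|u(x)-u(y)|^p\le2^{p-1}(|u(x)|^p+|u(y)|^p)$ together with $\tau_s(\Omega)\le C$ — uniform in $s$ by \autoref{lem:estimates_balls_tau_s} applied to a finite cover of $\Omega$ by balls of radius $r_0/2$ — these cross terms are controlled by $C\int_\Omega|u|^p\,\d\tau_s$, which is already present on the left-hand side. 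Hence we may assume $\Omega$ connected. By \autoref{th:sobolev_extension} put $v:=Eu\in W^{s,p}(\R^d)$; then $v|_\Omega=u$ and $\norm{v}_{W^{s,p}(\R^d)}\le C\norm{u}_{W^{s,p}(\Omega)}$ with $C$ independent of $s$, so it suffices to bound the left-hand side of \eqref{eq:approx_trace_inequality}, with $u$ replaced by $v|_\Omega$, by $C\norm{v}_{W^{s,p}(\R^d)}^p$.

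\emph{Step 2 (interpolation).} With $\alpha_0,\alpha_1,\beta_i$ as in \autoref{lem:help_trace_C}, set $\theta:=(s-\alpha_0)/(\alpha_1-\alpha_0)$; a short computation gives $\alpha_1-\alpha_0=1-s/2$ and $s-\alpha_0=\beta_0=s(p-1)/(2p)$, hence $\theta=\tfrac{s(p-1)}{p(2-s)}$, which varies in a compact subinterval of $(0,1)$ as $(s,p)$ ranges over $[s_\star,1)\times[p_\star,p^\star]$. By \eqref{eq:interpolation}, $W^{s,p}(\R^d)=\big(H^{\alpha_0,p}(\R^d),H^{\alpha_1,p}(\R^d)\big)_{\theta,p}$ with norm equivalence constants uniform in this range; for each $t>0$ fix a near optimal $K$-functional decomposition $v=v^0_t+v^1_t$, $v^i_t\in H^{\alpha_i,p}(\R^d)$, with $\norm{v^0_t}_{H^{\alpha_0,p}}+t\norm{v^1_t}_{H^{\alpha_1,p}}\le2K(t,v)$, and recall $\int_0^\infty\big(t^{-\theta}K(t,v)\big)^p\,\tfrac{\d t}{t}\le C\norm{v}_{W^{s,p}(\R^d)}^p$.

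\emph{Step 3 (the argument and the main obstacle).} For the $L^p$ part of \eqref{eq:approx_trace_inequality}, take $t=1$ and apply \eqref{eq:help_trace_C_2} to $v^0_1$ and $v^1_1$, obtaining $\int_\Omega|v|^p\,\d\tau_s\le2^{p-1}\big(\int_\Omega|v^0_1|^p\,\d\tau_s+\int_\Omega|v^1_1|^p\,\d\tau_s\big)\le C\big(\norm{v^0_1}_{H^{\alpha_0,p}}^p+\norm{v^1_1}_{H^{\alpha_1,p}}^p\big)\le CK(1,v)^p\le C\norm{v}_{W^{s,p}(\R^d)}^p$. For the seminorm part, decompose $\Omega\times\Omega$ according to the size of $R(x,y):=|x-y|+d_x+d_y$; on $\{R\ge\rho_0\}$, $\rho_0:=(r_0/2)\wedge1$, the weight is bounded and this piece is treated as the cross terms in Step~1, hence by the $L^p$ estimate just shown. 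On $\{R<\rho_0\}$ decompose dyadically in $R$: the decisive point is that a dyadic shell $\{R\asymp2^{-\nu}\}$ not only forces $|x-y|\lesssim2^{-\nu}$ but also $d_x,d_y\lesssim2^{-\nu}$, i.e.\ it localizes $\tau_s$ to a boundary layer of width $2^{-\nu}$ — which is precisely the role of the term $d_x+d_y$ in the weight. On each shell split $v=v^0_{t_\nu}+v^1_{t_\nu}$ at a scale adapted parameter $t_\nu$, estimate the two resulting double integrals by \autoref{lem:help_trace_C}\,\eqref{eq:help_trace_C_1} (admissible since $2^{-\nu}\le r_0/2$), and sum over $\nu$ against the weight $2^{\nu(d+s(p-2))}$; the outcome is controlled by $\int_0^\infty\big(t^{-\theta}K(t,v)\big)^p\,\tfrac{\d t}{t}\le C\norm{v}_{W^{s,p}(\R^d)}^p$. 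Carrying out this summation — balancing the rough scale $H^{\alpha_0,p}$ against the smooth scale $H^{\alpha_1,p}$ while exploiting the boundary-layer localization, and checking that all constants depend on $s$ and $p$ only through $s_\star,p_\star,p^\star$ (which holds because $\alpha_i,\beta_i,\theta$ stay in compact sets and \autoref{lem:help_trace_C}, \autoref{th:sobolev_extension}, \autoref{lem:estimates_balls_tau_s} already provide uniform constants) — is the technical heart of the proof and is performed as in \cite[Chapter~V]{JoWa84}. Together with the reductions of Step~1 this yields \eqref{eq:approx_trace_inequality}; the case $d=1$ is included, all ingredients being valid there.
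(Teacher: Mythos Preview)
Your proposal is correct and follows essentially the same strategy as the paper: reduce to connected components, extend via \autoref{th:sobolev_extension}, and use real interpolation between $H^{\alpha_0,p}(\R^d)$ and $H^{\alpha_1,p}(\R^d)$ together with \autoref{lem:help_trace_C}. Two presentational differences are worth noting.

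First, the paper packages the interpolation as \emph{operator} interpolation rather than an explicit $K$-functional decomposition at each scale. For the $L^p$ part it interpolates the restriction map $R:H^{\alpha_i,p}(\R^d)\to L^p(\Omega,\tau_s)$; for the seminorm part it introduces the sequence space $H=L^p(\Omega\times\Omega,|x-y|^{-(d-s)}\tau_s\otimes\tau_s)$ and the linear map $T:f\mapsto\big((f(x)-f(y))\1_{2^{-n-1}\le|x-y|<2^{-n}}\big)_n$, shows from \eqref{eq:help_trace_C_1} that $T:H^{\alpha_i,p}\to l^{\beta_i,\infty}$ is bounded, and then invokes the interpolation identity $[l^{\beta_0,\infty},l^{\beta_1,\infty}]_{\theta,p}=l^{(1-\theta)\beta_0+\theta\beta_1,p}$ with $(1-\theta)\beta_0+\theta\beta_1=s-s/p$. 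This avoids the bookkeeping of choosing $t_\nu$ and summing by hand, which you defer to \cite{JoWa84}.

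Second, the paper decomposes dyadically in $|x-y|$ alone, using at the outset the trivial bound $\big((|x-y|+d_x+d_y)\wedge1\big)^{-(d+s(p-2))}\le(|x-y|\wedge1)^{-(d+s(p-2))}$. Thus the ``boundary-layer localization'' you single out as decisive is in fact not used; the measure $\tau_s$ already carries all the boundary weighting needed, and the extra $d_x+d_y$ in the denominator of the seminorm can simply be dropped for the upper bound. Your decomposition in $R=|x-y|+d_x+d_y$ still works, but it is not the source of the gain.
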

	Before we give the proof of this theorem we want to motivate it. In anticipation of \autoref{sec:convergence_pointwise}, the LHS of \eqref{eq:approx_trace_inequality} converges in the limit $s\to 1$ to 
	\begin{equation*}
		\int_{\partial \Omega} \abs{u}^p \d \sigma + \int_{\partial\Omega\times \partial\Omega} \frac{\abs{u(x)-u(y)}^p}{(\abs{x-y}\wedge 1)^{d-1 + p(1-1/p)}} (\sigma\otimes\sigma)(x,y)\asymp \norm{u}_{W^{1-1/p, p}(\partial \Omega)}^p.
	\end{equation*}
	Thereby, we retrieve the classical trace result $W^{1,p}(\Omega)\to W^{1-1/p,p}(\partial \Omega)$ in the limit $s\to 1^{-}$.
	\begin{proof}
		Since $\Omega$ is a bounded open set with a uniform Lipschitz boundary, $\Omega$ decomposes into finitely many connected components $\Omega_i$, $i\in \{1, \dots, I\}$ and each $\Omega_i$ is a connected bounded Lipschitz domain. First, we prove \eqref{eq:approx_trace_inequality} for each $\Omega_i$.
		
		We define $\alpha_0$ and $\alpha_1$ as in \eqref{eq:alpha_i} depending on $p,s$. We set $\theta:= s(p-1)/((2+s)p)\in (0,1)$ and notice $0<s_\star(p_\star-1)/((2+s_\star)p_\star)\le \theta\le (p^\star-1)/(3\,p^\star)<1$. Most importantly, the relation $s=(1-\theta)\alpha_0+ \theta \alpha_1$ is true. By \autoref{th:sobolev_extension}, it is sufficient to prove the existence of a constant $C>0$ such that 
		\begin{equation*}
			\int\limits_{\Omega_i} \abs{u(x)}^p \tau_s(\d x)+
			\int\limits_{\Omega_i}\int\limits_{ \Omega_i} \frac{\abs{u(x)-u(y)}^p}{((\abs{x-y}+d_x+d_y)\wedge 1)^{d+s(p-2)}}\tau_s(\d y)\tau_s(\d x)  \le C\, \norm{u}_{W^{s,p}(\R^d)}^p
		\end{equation*}
		for any $u\in W^{s,p}(\R^d)$. Let $c_1>0$ be the constant from \autoref{lem:help_trace_C}. The equality \eqref{eq:help_trace_C_2} proves the continuity of the restriction operator $Ru(x)=u|_{\Omega_i}$ as a map from $R:H^{\alpha_i,p}(\R^d)\to L^p(\Omega_i, \tau_s(\d x))$, $i=0,1$. Real interpolation yields the continuity of 
		\begin{equation*}
			R:[H^{\alpha_0,p}(\R^d), H^{\alpha_1,p}(\R^d)]_{p\theta}=W^{s,p}(\R^d)\to [L^p(\Omega_i, \tau_s),L^p(\Omega_i, \tau_s)]_{p\theta}= L^p(\Omega_i, \tau_s)
		\end{equation*} 
		with the continuity constant $c_1$, see \eg \cite{BeLo76}. Now we consider the second term on the LHS of \eqref{eq:approx_trace_inequality} with $\Omega_i$ in place of $\Omega$. Let $u\in W^{s,p}(\R^d)$. 
		\begin{multline*}
			 \int\limits_{\Omega_i}\int\limits_{ \Omega_i} \frac{\abs{u(x)-u(y)}^p}{((\abs{x-y}+d_x+d_y)\wedge 1)^{d+s(p-2)}}\tau_s(\d y)\tau_s(\d x)\\
			 \le 2 \sum\limits_{n=0}^\infty 2^{ns(p-1)}\hspace{-1cm}\iint\limits_{\substack{\Omega_i\times \Omega_i\\ 2^{-n-1}\le \abs{x-y}<2^{-n} }}\hspace{-1cm} \abs{u(x)-u(y)}^p\frac{(\tau_s\otimes \tau_s)(\d (y,x))}{\abs{x-y}^{d-s}}+ \iint\limits_{\substack{\Omega_i\times \Omega_i\\ 1\le \abs{x-y} }} \hspace{-0.3cm}\abs{u(x)-u(y)}^p\tau_s(\d y)\tau_s(\d x)\\=: (\text{I}) + (\text{II}).
		\end{multline*}
		We estimate $(\text{II})$ using the continuity of $R$ shown above. We have
		\begin{align*}
			(\text{II})\le 2^{p} \tau_s(\Omega_i) \int\limits_{ \Omega_i } \abs{R u(x)}^p \tau_s(\d x)\le c_1^p \, 2^{p} \tau_s(\Omega_i) \norm{u}_{W^{s,p}(\R^d)}^p.
		\end{align*}
		We set for short $H:=L^p(\Omega_i\times \Omega_i, \abs{x-y}^{-d+s}\tau_s(\d y) \tau_s(\d x))$ and define for any $1<q\le \infty$, $\beta>0$ the space of sequences 
		\begin{align*}
			l^{\beta,q}&:=\{ (h_n)_n\,|\, h_n\in H \},\\
			\norm{(h_n)}_{l^{\beta,q}}&:= \norm{\Big( 2^{n\beta} \norm{h_n}_H \Big)_n}_{l^q(\N)}.
		\end{align*}
		Notice that 
		\begin{equation}\label{eq:continuity_trace_seminorm_sequence_connection}
			(\text{I})= \norm{\Big( (u(x)-u(y))\1_{2^{-n-1}\le \abs{x-y}<2^{-n}} \Big)_n}_{l^{s-s/p, p}}^p.
		\end{equation}
		We define the linear map
		\begin{align*}
			Tf(x,y):= \Big( (f(x)-f(y))\1_{2^{-n-1}\le \abs{x-y}<2^{-n}} \Big)_n, \quad f:\R^d\to \R.
		\end{align*}
		\autoref{lem:help_trace_C}, in particular \eqref{eq:help_trace_C_1}, shows the continuity of $T: H^{\alpha_i,s}\to l^{\beta_i,\infty}$ with $\beta_i= \alpha_i-s/p$ and the continuity constant $c_1$, $i=0,1$. Real interpolation yields the continuity of $T:[H^{\alpha_0,p}(\R^d), H^{\alpha_1,p}(\R^d)]_{p\theta}=W^{s,p}(\R^d)\to [l^{\beta_0,\infty}, l^{\beta_1,\infty}  ]_{p\theta}= l^{(1-\theta)\beta_0+\theta\beta_1,p}$ with the continuity constant $c_1$, see \eg \cite{BeLo76}. This proves the claimed inequality for each connected component $\Omega_i$ by \eqref{eq:continuity_trace_seminorm_sequence_connection} and
		\begin{equation*}
			(1-\theta)\beta_0+\theta\beta_1= (1-\theta)\alpha_0+ \theta \alpha_1 -s/p = s-s/p.
		\end{equation*}
	
		Simply summing over $i\in \{1,\dots, I\}$ yields a constant $c_2=c_2(d,\Omega, p_\star, p^\star,s_\star)>0$ such that 
		\begin{equation}\label{eq:Lp_embedding_Omega}
			\norm{u}_{L^p(\Omega;\, \tau_s)}^p \le c_2 \norm{u}_{W^{s,p}(\Omega)}^p.
		\end{equation}
		It remains to prove the existence of a constant $c_3=c_3(d,\Omega,p_\star, p^\star, s_\star)>0$ such that for any $i\ne j$ 
		\begin{equation*}
			\int\limits_{\Omega_i}\int\limits_{ \Omega_j} \frac{\abs{u(x)-u(y)}^p}{((\abs{x-y}+d_x+d_y)\wedge 1)^{d+s(p-2)}}\tau_s(\d y)\tau_s(\d x)\le c_3 \norm{u}_{W^{s,p}(\Omega)}^p. 
		\end{equation*}
		Since the distance between any two connected components is bounded from below by a uniform constant, this is an immediate consequence of the triangle inequality, \autoref{lem:estimates_balls_tau_s} as well as \eqref{eq:Lp_embedding_Omega}. 
	\end{proof}

\autoref{th:trace_classical_approximation_inequality_1} is not true in the case $p=1$, see \autoref{example:trace_p-1}. If we only keep the first term on the left-hand side in the estimate \eqref{eq:approx_trace_inequality}, then it is a fractional Hardy inequality, see e.g. \cite{Dyd04}. In \cite[Theorem 4]{DyKi22b} such a Hardy inequality is proven with a constant whose dependency on the parameter $s$ are not known. Since the dependency on $s$ is crucial in our setup, we prove the following theorem based on a Hardy inequality on the half space with the best constant, see \autoref{th:hardy_half_space}. 
\begin{theorem}[Hardy inequality]\label{th:Hardy}
	Let $\Omega\subset \R^d$ be a bounded Lipschitz domain and $s\in (0,1)$. There exists a constant $C=C(d,\Omega)>0$ such that 
	\begin{align*}
	(1-s)\int\limits_{\Omega} \frac{\abs{u(x)}}{d_x^s}\d x\le C\,\Bigg(\norm{u}_{L^1(\Omega)}+ s(1-s)\int\limits_{\Omega\times \Omega} \frac{\abs{u(x)-u(y)}}{\abs{x-y}^{d+s}} \d (x,y)\Bigg)
	\end{align*}
	for any $u\in W^{s,1}(\Omega)$. 
\end{theorem}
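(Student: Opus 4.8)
The plan is to reduce the global statement on $\Omega$ to a local statement on boundary charts, where the domain looks like a subgraph of a Lipschitz function, and then to a model half-space estimate with an explicit $s$-dependence. First I would fix a finite covering of $\partial\Omega$ by balls $B_{r_0/2}(z_i)$ adapted to the Lipschitz charts $T_{z_i},\phi_{z_i}$, together with one extra "interior'' piece $\Omega^{r_0/4}_{\mathrm{int}}:=\{x\in\Omega\,|\,d_x\ge r_0/4\}$ on which $d_x^{-s}\le (4/r_0)^s\le\max(1,4/r_0)$ is bounded, so the contribution of this piece is trivially controlled by $\|u\|_{L^1(\Omega)}$. For each boundary ball, a partition of unity $\{\eta_i\}$ subordinate to the covering lets me write $u=\sum_i \eta_i u$; since the $\eta_i$ are Lipschitz with constants depending only on $\Omega$, the seminorm of $\eta_i u$ is controlled by $\|u\|_{L^1(\Omega)}+[u]_{W^{s,1}(\Omega)}$ up to a constant depending only on $d,\Omega$ (standard product estimate, using $|\eta_i(x)-\eta_i(y)|\le \min(1,L|x-y|)$ and splitting the integral at $|x-y|=1$), so it suffices to prove the inequality for a single $v=\eta_i u$ supported in one chart.

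Next I would pass to the chart via $T_{z_i}$, which is an isometry, and then flatten the boundary by the bi-Lipschitz map $\Psi(x',x_d)=(x',x_d-\phi_{z_i}(x'))$. Two facts make this work: $d_x$ is comparable to the vertical distance to the graph, i.e. $d_x\asymp |x_d-\phi(x')|$ with constants depending only on $L$ — this is exactly estimate \eqref{eq:distance_estimate} from the proof of \autoref{lem:estimates_balls_tau_s} — and the change of variables $\Psi$ has Jacobian $1$ and distorts $|x-y|$ only by a factor depending on $L$. Hence the left-hand side over the chart is comparable to $(1-s)\int_{\mathbb{R}^d_+}|w(y)|\,y_d^{-s}\d y$ for $w=v\circ\Psi^{-1}\circ T_{z_i}^{-1}$, and the Gagliardo seminorm over the chart is comparable to the half-space seminorm of $w$; the term $\|u\|_{L^1}$ is preserved up to a constant. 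After extending $w$ by zero to all of $\mathbb{R}^d_+$ (legitimate since $v$ has compact support in the chart, and zero-extension does not increase the half-space seminorm) I am reduced to the model inequality
\begin{align*}
(1-s)\int\limits_{\mathbb{R}^d_+}\frac{|w(y)|}{y_d^{s}}\,\d y\le C\Bigl(\|w\|_{L^1(\mathbb{R}^d_+)}+s(1-s)\int\limits_{\mathbb{R}^d_+\times\mathbb{R}^d_+}\frac{|w(x)-w(y)|}{|x-y|^{d+s}}\,\d(x,y)\Bigr)
\end{align*}
with $C=C(d)$ independent of $s$, which is precisely what \autoref{th:hardy_half_space} (the sharp half-space Hardy inequality cited in the excerpt) provides — one only needs to absorb the $L^1$ term, which appears because the sharp inequality is typically stated for $w$ decaying at infinity or gives $\int y_d^{-s}|w|\lesssim s(1-s)[w]$ on compactly supported $w$; the zeroth-order term handles the lack of decay via a cutoff at scale $1$. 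Summing over the finitely many charts and adding the interior piece finishes the proof, with the final constant depending only on $d$ and $\Omega$ through the number of charts, the localization radius $r_0$, and the Lipschitz constant $L$.

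The main obstacle is bookkeeping the $s$-dependence of all constants: the covering, the partition-of-unity product estimate, and the bi-Lipschitz flattening must each be shown to introduce only $s$-independent factors, and the $(1-s)$ and $s(1-s)$ prefactors must be tracked so that nothing degenerates as $s\to 0^+$ or $s\to 1^-$. In the product estimate for $[\eta_i u]_{W^{s,1}}$ the factor $s(1-s)$ must match on both sides; in the chart reduction the distortion of $|x-y|^{-d-s}$ by a bi-Lipschitz map is uniform in $s$ only after one checks the implied constant stays bounded, which it does since the bi-Lipschitz constants depend only on $L$. Once one commits to doing the half-space estimate with the explicit constant from \autoref{th:hardy_half_space}, the rest is a careful but routine localization argument, and no genuinely new idea beyond the distance comparison \eqref{eq:distance_estimate} is needed.
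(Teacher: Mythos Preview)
Your proposal is correct and follows essentially the same route as the paper: localize via a finite boundary cover and partition of unity, control the interior piece trivially, bi-Lipschitz flatten each chart using the distance comparison \eqref{eq:distance_estimate}, and apply the sharp half-space Hardy inequality \autoref{th:hardy_half_space}. The paper carries out exactly this plan, splitting the half-space seminorm of $\tilde\eta_i u_i$ into the three pieces you anticipate (the genuine seminorm term, the commutator $|u_i(y)||\tilde\eta_i(x)-\tilde\eta_i(y)|$, and the cross term from zero-extension outside the ball), and tracks the $s$-dependence via \autoref{lem:constant}, which gives $\cD_{s,1}^{-1}\asymp 1/s$ --- this is precisely the bookkeeping you flag as the main obstacle.
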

Before we state the proof of the theorem, let us remark that the previous inequality, in the limit $s\to 1^-$, yields the classical trace embedding $W^{1,1}(\partial \Omega)\to L^1(\partial \Omega)$ since the measure $\tau_s$ converges weakly to the surface measure on $\partial \Omega$. 
\begin{proof}
It is sufficient to prove the statement for any connected component of $\Omega$ in place of $\Omega$. Thus, we can assume without loss of generality that $\Omega$ is a connected bounded Lipschitz domain. Therefore, we can cover the boundary with finitely many neighborhoods $U_i$ and bi-Lipschitz maps $\phi_i:U_i\to B_1(0)$ such that $\phi_i( U_i\cap \Omega ) = B_1(0)_+:=\{ (x',x_d)\in B_1(0)\,|\, x_d>0 \}$, $i\in \{1,\dots, N\}$, see e.g. \cite[Chapter 1.2.1]{Gri85}. We denote the distance of $\Omega\cap \bigcap_{i=1}^N U_i^c$ to the boundary $\partial \Omega$ by $2r_0>0$. We fix $U_0:= \{ x\in \R^d\,|\, \distw{x}{\Omega\cap \bigcap_{i=1}^N U_i^c} <r_0  \}\subset \Omega$. Notice that $\{U_i\,|\, i=0, \dots, N\}$ is an open cover of $\overline{\Omega}$ and $\distw{U_0}{\partial \Omega}\ge r_0$. Next, we pick a partition of unity $\eta_i\in C_c^\infty(U_i)$ adapted to $U_i$, \ie $\sum_{i=0}^{N}\eta_i=1$ on $\overline{\Omega}$. We define $\tilde{\eta}_i:=\eta_i\circ \phi_i^{-1}\in C_{c}^{0,1}(B_1(0))$. Let $c_1=c_1(\tilde{\eta}_1, \dots, \tilde{\eta}_N)\ge 1$ such that $[\tilde{\eta}_i]_{C^{0,1}}\le c_1$ for all $i=1, \dots, N$. Without loss of generality we assume that $\tilde{\eta}_i=1$ in $B_{1/2}(0)$ for all $i=1, \dots, N$. Then 
\begin{align*}
\int\limits_{\Omega} \frac{\abs{u(x)}}{d_x^s}\d x &= \sum\limits_{i=1}^N \int\limits_{B_1(0)_+} \frac{\abs{u(\phi_i^{-1}(x) )}}{d_{\phi_i^{-1}(x)}^s} \eta_i(\phi_i^{-1}(x)) \abs{\det (D\phi_i^{-1}(x)) }  \d x   \\
&\qquad+ \int\limits_{U_0} \eta_0(x) \frac{\abs{u(x)}}{d_x^s}\d x =: \sum_{i=1}^{N} (\text{I}_i) + (\text{II}).
\end{align*} 
We define $u_i:= u \circ \phi_i^{-1}$ for all $i=1, \dots, N$. By the bi-Lipschitz continuity of the $\phi_i$'s we find a constant $c_2=c_2(\phi_1, \dots, \phi_N)>1$ such that $c_2^{-1} x_d\le d_{\phi_i^{-1}(x)}\le c_2 x_d$ for any $x\in B_1(0)_+$ and $i\in \{1,\dots, N\}$, see \eqref{eq:distance_estimate}. Further, we find a constant $c_3=c_3(\phi_1, \dots, \phi_N)\ge 1$ such that both $[\phi_i^{-1}]_{C^{0,1}}$ and $[\phi_i]_{C^{0,1}}$ are bounded from above by $c_3$ and from below by $c_3^{-1}$ for all $i$. We apply \autoref{th:hardy_half_space} to the function $\tilde{\eta}_i u_i$ to find:
\begin{align*}
(\text{I}_i)&\le c_2c_3^d \int\limits_{\R_+^d} \frac{\abs{\tilde{\eta}_i(x)u_i(x)}}{x_d^{s}} \d x \le c_2 c_3^d \cD_{s,1}^{-1} \int\limits_{\R^d_+ \times \R^d_+} \frac{\abs{\tilde{\eta}_i(x)u_i(x)-\tilde{\eta}_i(y)u_i(y)}}{\abs{x-y}^{d+s}}\d(x,y)\\
&\le c_2 c_3^d \,\cD_{s,1}^{-1} \Bigg( \,\int\limits_{B_1(0)_+\times B_1(0)_+} \!\!\!\!\!\!\! \tilde{\eta}_i(x) \frac{\abs{u_i(x)-u_i(y)}}{\abs{x-y}^{d+s}} \d (x,y) + \!\!\!\int\limits_{B_1(0)_+\times B_1(0)_+} \!\!\!\!\!\!\! \abs{u_i(y)} \frac{\abs{\tilde{\eta}_i(x) - \tilde{\eta}_i(y)}}{\abs{x-y}^{d+s}}\d (x,y)\\
&\qquad +2\, \int\limits_{B_1(0)_+} \tilde{\eta}_i(x) \abs{u_i(x)} \int\limits_{B_1(0)^c}\abs{x-y}^{-d-s}\d y \d x \Bigg)=: (\text{III}_i)+(\text{IV}_i)+(\text{V}_i)
\end{align*}
The first term in the previous estimate, \ie $(\text{III}_i)$, can be simply estimated using a change of variables and the bi-Lipschitz continuity of $\phi_i$: 
\begin{align*}
(\text{III}_i)\le c_2 c_3^{4d+s} \,\cD_{s,1}^{-1}\int\limits_{(U_i\cap \Omega)\times (U_i \cap \Omega)} \frac{\abs{u(x)-u(y)}}{\abs{x-y}^{d+s}} \d (x,y)
\end{align*}	
To estimate $(\text{IV}_i)$ we calculate 
\begin{align*}
\int\limits_{B_1(0)} \frac{\abs{\tilde{\eta}_i(x) -\tilde{\eta}_i(y) }}{\abs{x-y}^{d+s}}\d x \le c_1 \frac{\omega_{d-1}2^{1-s}}{1-s}.
\end{align*}
Using this, we find 
\begin{align*}
(\text{IV}_i)\le c_1 c_2c_3^d\, \cD_{s,1}^{-1} \frac{2\omega_{d-1}}{1-s}\int\limits_{B_1(0)_+} \abs{u_i(y)}\d y \le c_1 c_2 c_3^{2d}\, \cD_{s,1}^{-1} \frac{2\omega_{d-1}}{1-s} \int\limits_{U_i\cap \Omega} \abs{u(x)} \d x.
\end{align*}
Now, we estimate $(\text{V}_i)$. Since $\tilde{\eta}_i\in C_c^{0,1}(B_1)$, we find a constant $c_4\ge 1$ such that $\tilde{\eta}_i(x)\le c_4 (1-\abs{x})$ for all $i=1, \dots, N$. We notice for any $x\in B_1(0)$ 
\begin{align*}
\int\limits_{B_1(0)^c} \abs{x-y}^{-d-s}\d y \le \int\limits_{B_{(1-\abs{x})}(x)^c} \abs{x-y}^{-d-s}\d y =  \omega_{d-1} \frac{(1-\abs{x})^{-s}}{s} 
\end{align*}
and, thus, 
\begin{align*}
(\text{V}_i)\le 2 c_2 c_3^{d} c_4 \,\frac{\cD_{s,1}^{-1}}{s} \omega_{d-1} \int\limits_{B_1(0)_+} \abs{u_i(x)}\d x\le 2c_2 c_3^{2d} c_4 \, \frac{\cD_{s,1}^{-1}}{s} \omega_{d-1} \int\limits_{U_i\cap \Omega} \abs{u(x)}\d x. 
\end{align*}
To estimate $(\text{II})$ we simply notice that the distance function is bounded from below by $r_0$ on $U_0$. So finally, we put everything together. This yields
\begin{align*}
\int\limits_{\Omega} \frac{\abs{u(x)}}{d_x^{s}} \d x \le \frac{c_6}{1-s} \int\limits_{\Omega} \abs{u(x)}\d x + c_7 s \int\limits_{\Omega\times \Omega} \frac{\abs{u(x)-u(y)}}{\abs{x-y}^{d+s}}\d (x,y).
\end{align*} 
Here 
\begin{equation*}
c_6:= (r_0\wedge 1)^{-1}+  2\omega_{d-1} N c_1c_2c_3^{2d}c_4c_5 , \quad c_7:= N c_2 c_3^{4d+1} c_5 
\end{equation*}
and $c_5$ is the constant from \autoref{lem:constant}.
\end{proof}

The following two lemmata are technical tools which we employ in the proof of the trace result, see \autoref{prop:trace_Lp_estimate}, \autoref{prop:trace_seminorm_estimate}. They allow us to rewrite the distance functions appearing in the measure $\mu_s$ as an integral over $\Omega$. This enables us to use the regularity of the functions from $\VspOm$ in $\Omega$ when we prove the trace result. 
\begin{lemma}\label{lem:distance_upper}
	Let $\emptyset \ne B\subset \R^d$ be an open set, $s>0$ and $f:[0,\infty)\to [0,\infty)$ be a non-increasing function. For any $x\in \overline{B}^c$
	\begin{align*}
	\int\limits_{ B }\frac{f(\abs{x-z})}{\abs{x-z}^{d+s}} \d z \le \frac{\omega_{d-1}}{s}\,\frac{f(\distw{x}{B})}{\distw{x}{B}^s}
	\end{align*}  
	holds. If $B$ is bounded, then there exists a constant $C=C(d,B)$ such that for any $x\in \overline{B}^c$
	\begin{equation*}
	\int\limits_{ B }\frac{f(\abs{x-z})}{\abs{x-z}^{d+s}} \d z \le \frac{C}{s} \frac{f(\distw{x}{B})}{\distw{x}{B}^s\, (1+\distw{x}{B})^d}
	\end{equation*}
\end{lemma}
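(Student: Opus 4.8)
The plan is to prove the two estimates in turn, the first by a plain monotonicity argument together with a comparison with a spherical shell, the second by a dichotomy on the size of $\dist(x,B)$. Throughout put $\rho:=\dist(x,B)$, which is strictly positive since $x\in\overline{B}^c$, and observe that $\abs{x-z}\ge\rho$ for every $z\in B$.

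For the first inequality I use only that $f$ is non-increasing, so that $f(\abs{x-z})\le f(\rho)$ on $B$, and that $B$ is contained in the exterior of the ball $B_\rho(x)$. Enlarging the domain of integration accordingly and passing to polar coordinates centred at $x$ gives
\begin{align*}
\int\limits_B\frac{f(\abs{x-z})}{\abs{x-z}^{d+s}}\,\d z\;\le\;f(\rho)\int\limits_{B_\rho(x)^c}\frac{\d z}{\abs{x-z}^{d+s}}\;=\;f(\rho)\,\omega_{d-1}\int\limits_\rho^\infty r^{-1-s}\,\d r\;=\;\frac{\omega_{d-1}}{s}\,\frac{f(\rho)}{\rho^{s}},
\end{align*}
which is the asserted bound; note that this step uses neither boundedness of $B$ nor any upper bound on $s$.

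For the second inequality I additionally assume $B$ bounded, so that $\abs{B}<\infty$, and split according to whether $\rho$ is small or large. If $\rho\le1$, then $(1+\rho)^{-d}\ge2^{-d}$, so the first part already gives the claim with constant $2^{d}\omega_{d-1}$. If $\rho>1$, I no longer exploit the precise order $d+s$ of the kernel but only the two crude pointwise bounds $\abs{x-z}\ge\rho$ and $f(\abs{x-z})\le f(\rho)$ valid on $B$; integrating the resulting constant $f(\rho)\rho^{-d-s}$ over $B$ produces the bound $\abs{B}\,f(\rho)\,\rho^{-d-s}$, and since $\rho>1$ forces $\rho^{d}\ge2^{-d}(1+\rho)^{d}$, this is (recalling that $s<1$ here) at most $2^{d}\abs{B}\,s^{-1}\,f(\rho)\,\rho^{-s}(1+\rho)^{-d}$. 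Choosing $C:=2^{d}\big(\omega_{d-1}+\abs{B}\big)$, which depends only on $d$ and $B$, concludes the proof.

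I do not expect a genuine obstacle; the one point worth highlighting is the provenance of the two exponents on the right-hand side of the second estimate. The singular factor $\dist(x,B)^{-s}$ mirrors the order $d+s$ of the kernel and, exactly as in the first inequality, arises from integrating $\abs{x-z}^{-d-s}$ over an exterior shell. The decay factor $(1+\dist(x,B))^{-d}$ at infinity is invisible at the level of the kernel and stems purely from the finiteness of the Lebesgue measure of $B$. Since these two mechanisms are effective in complementary ranges of $\dist(x,B)$, the proof of the second estimate splits naturally at $\dist(x,B)=1$: the shell estimate of the first part is used when the distance is small and the volume estimate when it is large.
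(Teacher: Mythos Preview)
Your proof is correct and follows essentially the same route as the paper: the first estimate via $B\subset B_\rho(x)^c$ and polar coordinates, the second via a dichotomy at $\rho=1$ combining the first estimate for $\rho\le1$ with the crude volume bound $\abs{B}\,f(\rho)\,\rho^{-d-s}$ for $\rho>1$. Your parenthetical ``recalling that $s<1$'' is apt---the paper's own argument tacitly uses the same restriction to absorb the factor $s$ into $C$, even though the lemma is nominally stated for all $s>0$.
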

\begin{proof}
	Fix $x\in \overline{B}^c$. We use $B\subset B_{\distw{x}{B}}(x)^c$ and apply polar coordinates.
	\begin{equation*}
	\int\limits_{ B }\frac{f(\abs{x-z})}{\abs{x-z}^{d+s}} \d z\le \int\limits_{ B_{\distw{x}{B}}(x)^c}\frac{f(\abs{x-z})}{\abs{x-z}^{d+s}} \d z= \omega_{d-1} \int\limits_{\distw{x}{B}}^\infty f(t)\,t^{-1-s} \d t \le  \omega_{d-1} \frac{f(\distw{x}{B})}{s\,\distw{x}{B}^{s}}. 
	\end{equation*}
	In case that $\distw{x}{B}<1$, the second claim for bounded $B$ is a direct consequence of the first statement. If $B$ is bounded and $\distw{x}{B}\ge 1$, then 
	\begin{equation*}
	\int\limits_{ B }\frac{f(\abs{x-z})}{\abs{x-z}^{d+s}} \d z \le \abs{B} \frac{f(\distw{x}{B})}{\distw{x}{B}^{d+s}}\le \abs{B}2^{d} \frac{f(\distw{x}{B})}{\distw{x}{B}^{s}(1+\distw{x}{B})^d}.
	\end{equation*}
\end{proof}
\begin{lemma}\label{lem:distance_lower}
	Let $\emptyset \ne B\subset \R^d$ be an open set satisfying the uniform interior cone condition with a compact boundary. Then there exist two constants $C=C(d,B)>0$ such that for any non-increasing function $f:[0,\infty)\to [0,\infty)$ and any $s>0$
	\begin{equation*}
	\frac{f(2 \distw{x}{\overline{B}})}{\distw{x}{\overline{B}}^s(1+\distw{x}{\overline{B}})^d}  \le C\,  \int\limits_{ B } \frac{f(\abs{x-z})}{\abs{x-z}^{d+s}}\1_{ B_{1+\distw{x}{\overline{B}}}(x)}(z) \d z
	\end{equation*}
	for all $x\in \overline{B}^c$.
\end{lemma}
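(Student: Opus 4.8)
The plan is to establish the lower bound $\frac{f(2\,\dist(x,\overline{B}))}{\dist(x,\overline{B})^s(1+\dist(x,\overline{B}))^d} \le C \int_B \frac{f(|x-z|)}{|x-z|^{d+s}}\1_{B_{1+\dist(x,\overline B)}(x)}(z)\,\d z$ by producing, for each $x \in \overline{B}^c$, a sufficiently large sub-region of $B$ that is both close to $x$ (so that $|x-z| \le 2\,\dist(x,\overline B)$ there, forcing $f(|x-z|) \ge f(2\,\dist(x,\overline B))$ by monotonicity, and simultaneously $|x-z| \lesssim \dist(x,\overline B)(1+\dist(x,\overline B))$, controlling the denominator $|x-z|^{d+s}$) and whose Lebesgue measure is comparable to $\dist(x,\overline B)^d$. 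The natural candidate is the interior cone: write $\delta := \dist(x,\overline B)$, pick a nearest boundary point $z_0 \in \partial B$ with $|x - z_0| = \delta$, and use the uniform interior cone condition to place a cone $\mathcal{C}_{z_0} \subset \overline{B}^c{}^c = \overline B$ (more precisely $\mathcal{C}_{z_0}\subset B$ up to its apex) with apex $z_0$, fixed opening angle $\alpha$ and fixed height $h_0$, pointing roughly toward the interior.

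First I would reduce to the case $\delta \le h_0/2$ (say): when $\delta$ is bounded below by a fixed constant, the estimate is easy since $B$ is bounded and nonempty, so $\int_B \frac{f(|x-z|)}{|x-z|^{d+s}}\1_{B_{1+\delta}(x)} \,\d z \gtrsim f(\diam B + \delta)\,(\diam B + \delta)^{-d-s}\,|B'|$ for a suitable fixed sub-ball $B' \subset B$, and this dominates the left-hand side after adjusting constants, using that $f$ is non-increasing. In the main regime $\delta$ small, I would consider the truncated cone $\mathcal{C} := \mathcal{C}_{z_0} \cap B_{c\delta}(z_0)$ for a small dimensional constant $c$ chosen so that $\mathcal{C}$ stays inside $B$ (this is where I use that the cone genuinely enters $B$, not just $\overline B$; shrinking the localization radius as in the remark after \autoref{lem:estimates_balls_tau_s} handles the case the nearest point is not on a graph patch). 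For $z \in \mathcal{C}$ we have $|x-z| \le |x-z_0| + |z_0-z| \le \delta + c\delta \le 2\delta$, hence $f(|x-z|) \ge f(2\delta)$ and $|x-z| \le 2\delta \le 2\delta(1+\delta)$ and also $|x-z| \le 1 + \delta$ so the indicator equals $1$ on $\mathcal{C}$; moreover $|\mathcal{C}| \ge c_d \delta^d$ with $c_d$ depending only on $d$ and the opening angle $\alpha$.

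Putting these together:
\begin{align*}
\int\limits_B \frac{f(|x-z|)}{|x-z|^{d+s}}\1_{B_{1+\delta}(x)}(z)\,\d z
\ge \int\limits_{\mathcal{C}} \frac{f(|x-z|)}{|x-z|^{d+s}}\,\d z
\ge \frac{f(2\delta)}{(2\delta)^{d+s}}\,|\mathcal{C}|
\ge \frac{c_d\,f(2\delta)}{2^{d+s}\,\delta^{s}}.
\end{align*}
When $\delta \le 1$ this already gives $\frac{f(2\delta)}{\delta^s(1+\delta)^d}\le 2^d\,\frac{f(2\delta)}{\delta^s}\le C\int_B(\cdots)$, and when $\delta > h_0/2$ we are in the easy regime described above (where $(1+\delta)^d$ in the denominator is essential to absorb the growth of $\diam B + \delta$). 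The main obstacle, and the only genuinely delicate point, is verifying that a fixed proportion of the interior cone near the nearest boundary point actually lies inside the open set $B$ with a volume lower bound uniform in $x$ — i.e. carefully invoking the uniform interior cone condition (with a possibly reduced height/radius) and checking the geometry when the foot point $z_0$ sits near the overlap of two boundary charts; once that is pinned down, the monotonicity of $f$ and elementary estimates finish the proof. I would remark that the restriction to $B_{1+\delta}(x)$ on the right-hand side is harmless precisely because the cone $\mathcal{C}$ we use already lies within distance $2\delta \le 1+\delta$ of $x$.
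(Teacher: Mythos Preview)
Your core idea---truncate the interior cone at a nearest boundary point and use monotonicity of $f$---is exactly the paper's approach, and your small-$\delta$ computation is correct. The gap is in your ``easy regime'' $\delta>h_0/2$. First, you assume $B$ is bounded, but the hypothesis is only that $\partial B$ is compact; e.g.\ $B$ could be a ball, so $\overline{B}^c$ is unbounded and $\delta$ can be arbitrarily large. Second, even if $B$ were bounded, a \emph{fixed} sub-ball $B'\subset B$ need not lie in $B_{1+\delta}(x)$: with $B=B_1(0)$, $x=(10,0,\dots,0)$, $\delta=9$, and $z=(-\tfrac12,0,\dots,0)\in B$ one gets $|x-z|=10.5>1+\delta$. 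Third, your lower bound produces $f(\diam B+\delta)$ while the left-hand side carries $f(2\delta)$; since $f$ is non-increasing these compare the wrong way unless $\diam B\le\delta$, which fails throughout the intermediate range $h_0/2<\delta<\diam B$. So the case split, far from being routine, is where your argument breaks.

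The paper avoids any split: assuming wlog $h_0\le1$, it truncates the cone at height $\min\{\delta,h_0\}$. Then for every $z$ in the truncated cone $\widetilde{C}$ one has simultaneously $|x-z|\le\delta+\min\{\delta,h_0\}\le2\delta$ and $|x-z|\le\delta+h_0\le1+\delta$ (so the indicator is $1$), while $|\widetilde{C}|=c_1(\min\{\delta,h_0\})^d$. The single chain
\[
\int_B \frac{f(|x-z|)}{|x-z|^{d+s}}\,\1_{B_{1+\delta}(x)}(z)\,\d z
\ \ge\ \frac{f(2\delta)}{(2\delta)^{d+s}}\,c_1\bigl(\min\{\delta,h_0\}\bigr)^d
\ \ge\ \frac{c_1 h_0^d}{2^{d+1}}\cdot\frac{f(2\delta)}{\delta^s(1+\delta)^d}
\]
covers all $\delta$, using $\bigl(\min\{\delta,h_0\}\cdot(1+\delta)\bigr)^d\ge (h_0\delta)^d$ (check $\delta\le h_0$ and $\delta>h_0$ separately). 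Finally, your ``main obstacle'' is not one: that the cone lies in $B$ is exactly the content of the uniform interior cone condition, which is a pointwise hypothesis and involves no chart-overlap issues.
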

\begin{proof}Fix $x\in \overline{B}^c$ and a minimizer $x_0\in \partial B$ of the distance $\distw{x}{\overline{B}}$. Since $B$ satisfies the uniform interior cone condition we find an interior cone $C$ with apex at $x_0$ whose height $h_0$ and an open angle which is independent of $x_0$. Wlog. we assume $h_0\le 1$. Let $\widetilde{C}:= \{ z\in C\,|\, \abs{z-x_0}<\distw{x}{\overline{B}} \}$ be a subcone with a reduced height. Notice $\widetilde{C}\subset B_{1+\distw{x}{\overline{B}}}(x)\cap B$. For any $z\in \widetilde{C}$ we have $\abs{x-z}\le \abs{x-x_0}+\abs{x-z}\le \distw{x}{\overline{B}}+ \min\{\distw{x}{\overline{B}}, h_0\}\le 2 \distw{x}{\overline{B}}$. Thus, the claim simply follows from
\begin{multline*}
	\int\limits_{ B } \frac{f(\abs{x-z})}{\abs{x-z}^{d+s}}\1_{ B_{1+\distw{x}{\overline{B}}}(x)}(z) \d z \ge \frac{f(2\distw{x}{\overline{B}})}{(2\distw{x}{\overline{B}})^{d+s}} |\widetilde{C}|\\= f(2\distw{x}{\overline{B}}) \frac{c_1 \big( \min\{ \distw{x}{\overline{B}}, h_0 \} \big)^d}{(2\distw{x}{\overline{B}})^{d+s}}\ge \frac{c_1 h_0^d }{2^{d+1}}  \frac{f(2\distw{x}{\overline{B}})}{\distw{x}{\overline{B}}^{s}(1+\distw{x}{\overline{B}} )^d }.
\end{multline*}
Here we used $|\widetilde{C}|= c_1 \big( \min\{ \distw{x}{\overline{B}}, h_0 \} \big)^d$ where $c_1>0$ depends only on $d$ and the opening angle of $\widetilde{C}$ which is independent of $x_0$. 
\end{proof}

We are now in the position to prove the trace part in \autoref{th:trace-extension} and \autoref{th:trace-p-1}. We split the proof into two propositions. The following proposition contains the trace embedding $\VspOm\to L^p(\Omega^c;\,\mu_s(\d x))$ for all $1\le p<\infty$. The estimates of the seminorm $[\cdot]_{\cT^{s,p}(\Omega^c)}$ for $1<p<\infty$ are proven thereafter in \autoref{prop:trace_seminorm_estimate}. Recall the definition of the sets $\Omega_r^{\text{ext}}, \Omega^r_{\text{ext}}$ in \eqref{eq:def-Omega-decomp} for given $r>0$.
\begin{proposition}\label{prop:trace_Lp_estimate}
	Let $d\in \N$, $\Omega\subset \R^d$ be a bounded Lipschitz domain, $s_\star\in(0,1)$, $1< p^\star<\infty$. There exists a constant $C=C(\Omega, p^\star, s_\star)>0$ such that 
	\begin{equation}\label{eq:trace_Lp_continuity}
		\norm{\trns u}_{L^p(\Omega^c;\, \mu_s)}\le C\, \norm{u}_{\VspOm}
	\end{equation}
	for any $s\in (s_\star, 1)$, $1\le p\le p^\star$ and $u\in \VspOm$ 
\end{proposition}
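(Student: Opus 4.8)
\emph{Strategy.} The plan is to bound $|u(x)|^{p}$ pointwise for a.e.\ $x\in\overline{\Omega}^{c}$ by averaging $u$ over a piece of $\Omega$ lying close to $x$ and then distributing the measure $\mu_{s}$ across that average. Fix $x_{0}\in\partial\Omega$ with $|x-x_{0}|=d_{x}$ and, invoking the uniform interior cone condition, an interior cone $\cC_{x_{0}}\subset\Omega$ with apex $x_{0}$ whose opening angle and height $h_{0}$ depend only on $\Omega$ (we may take $h_{0}\le1$, below the localization radius). I would set $r_{0}:=h_{0}$ and split $\Omega^{c}$ (the set $\partial\Omega$ being $\mu_{s}$-null) into a near part $\Omega_{r_{0}}^{\text{ext}}=\{d_{x}<r_{0}\}$ and a far part $\Omega^{r_{0}}_{\text{ext}}=\{d_{x}\ge r_{0}\}$, using throughout the elementary facts that $|x-z|\ge d_{x}$ when $x\notin\overline{\Omega}$ and $z\in\Omega$ (the nearest point of $\overline{\Omega}$ to $x$ lies on $\partial\Omega$), and $|x-z|\ge d_{z}$ when $x\in\Omega^{c}$ and $z\in\Omega$ (the segment $[z,x]$ meets $\partial\Omega$).

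\emph{Near part.} For $x\in\Omega_{r_{0}}^{\text{ext}}$ I would use the subcone $\widetilde{C}_{x}:=\{z\in\cC_{x_{0}}:|z-x_{0}|<d_{x}\}$, for which $\widetilde{C}_{x}\subset\Omega$, $|\widetilde{C}_{x}|\asymp d_{x}^{d}$, and $d_{x}\le|x-z|\le2d_{x}$ on $\widetilde{C}_{x}$. Writing $u(x)=\dashint_{\widetilde{C}_{x}}u(x)\,\d z$, Jensen's inequality (valid since $p\ge1$) and $(a+b)^{p}\le2^{p-1}(a^{p}+b^{p})$ give
\[
|u(x)|^{p}\le2^{p-1}\dashint_{\widetilde{C}_{x}}|u(x)-u(z)|^{p}\,\d z+2^{p-1}\dashint_{\widetilde{C}_{x}}|u(z)|^{p}\,\d z.
\]
For the first term I would use $|\widetilde{C}_{x}|^{-1}\lesssim d_{x}^{sp}|x-z|^{-d-sp}$ on $\widetilde{C}_{x}$ and note that the leftover weight $d_{x}^{-s}(1+d_{x})^{-d-s(p-1)}d_{x}^{sp}=\bigl(\tfrac{d_{x}}{1+d_{x}}\bigr)^{s(p-1)}(1+d_{x})^{-d}\le1$; integrating against $\mu_{s}$ this part drops to $(1-s)\iint_{\Omega^{c}\times\Omega}|u(x)-u(z)|^{p}|x-z|^{-d-sp}\,\d z\,\d x\le[u]_{\VspOm}^{p}$, since $\Omega^{c}\times\Omega\subset(\Omega^{c}\times\Omega^{c})^{c}$. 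For the second term I would exchange the order of integration; for fixed $z\in\Omega$ the inner integral over $\{x\in\overline{\Omega}^{c}:z\in\widetilde{C}_{x},\ d_{x}<r_{0}\}$ is dominated, via $d_{x}\ge|x-z|/2$ and $|x-z|\ge d_{z}$, by $\int_{|x-z|\ge d_{z}}(|x-z|/2)^{-d-s}\,\d x\asymp s^{-1}d_{z}^{-s}$, so this part is $\lesssim s^{-1}\|u\|_{L^{p}(\Omega;\tau_{s})}^{p}$. Here I would invoke the already established $s$-robust bounds: \autoref{th:trace_classical_approximation_inequality_1} for $p>1$ and \autoref{th:Hardy} for $p=1$ control $\|u\|_{L^{p}(\Omega;\tau_{s})}^{p}$ by $\|u\|_{W^{s,p}(\Omega)}^{p}\le\|u\|_{\VspOm}^{p}$.

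\emph{Far part and conclusion.} On $\Omega^{r_{0}}_{\text{ext}}$ I would fix once and for all a ball $B^{*}=B_{\rho}(z^{*})$ with $\overline{B^{*}}\subset\Omega$ depending only on $\Omega$ and average over $B^{*}$ instead. The value piece is immediate: it is at most $|B^{*}|^{-1}\mu_{s}(\Omega^{r_{0}}_{\text{ext}})\,\|u\|_{L^{p}(\Omega)}^{p}$, where $\mu_{s}(\Omega^{r_{0}}_{\text{ext}})\le c(\Omega,s_\star)$ is a uniformly bounded tail mass (split at $d_{x}=1$; beyond that the density is $\lesssim d_{x}^{-d-sp}$). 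For the difference piece, since $|x-z|\le c(\Omega)(1+d_{x})$ for $z\in B^{*}$ and $d_{x}\ge r_{0}$, one has $|B^{*}|^{-1}\lesssim(1+d_{x})^{d+sp}|x-z|^{-d-sp}$, the leftover weight is $d_{x}^{-s}(1+d_{x})^{-d-s(p-1)}(1+d_{x})^{d+sp}=(1+d_{x}^{-1})^{s}\le1+r_{0}^{-1}$, and the term again collapses into $[u]_{\VspOm}^{p}$. Adding the four contributions yields \eqref{eq:trace_Lp_continuity} with a constant depending only on $\Omega$, $p^\star$ and $s_\star$. I expect the main obstacle to be precisely this weight bookkeeping — choosing the averaging set so that in every ``difference'' term the excess powers of $d_{x}$ and $(1+d_{x})$ produced by comparing $|\widetilde{C}_{x}|$ (resp.\ $|B^{*}|$) with $|x-z|^{d+sp}$ cancel against the density of $\mu_{s}$ with an $s$-uniform constant — together with the observation that the genuinely boundary-concentrated ``value'' term reduces to the single robust quantity $\|u\|_{L^{p}(\Omega;\tau_{s})}^{p}$ that \autoref{th:trace_classical_approximation_inequality_1} and \autoref{th:Hardy} were set up to handle.
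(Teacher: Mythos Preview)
Your proposal is correct and follows essentially the same route as the paper's proof: the paper also splits $\Omega^c$ into a near and a far strip, replaces $\mu_s$ on the near strip by an average over an interior cone (this is precisely \autoref{lem:distance_lower}), separates into a difference term absorbed by $[u]_{\VspOm}^p$ and a value term that lands on $\|u\|_{L^p(\Omega;\tau_s)}^p$ after integrating out $x$ (this is \autoref{lem:distance_upper}), and then invokes \autoref{th:trace_classical_approximation_inequality_1} for $p>1$ resp.\ \autoref{th:Hardy} for $p=1$. The only cosmetic differences are that the paper splits at $d_x=1$ rather than $d_x=h_0$, averages over all of $\Omega$ on the far part rather than a fixed ball, and abstracts the cone bounds into the two preparatory lemmata instead of writing them inline.
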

\begin{proof}
	We split the integration domain of $\norm{\trns u}_{L^{p}(\Omega^c;\,\mu_s)}^p$ into $\Omega_1^{\text{ext}}$ and $\Omega^1_{\text{ext}}$. Let $c_1=c_1(d, \Omega)>0$ be the constant from \autoref{lem:distance_lower} when applied to $B= \Omega$ and $f=1$. We have
	\begin{align*}
	\norm{\trns u}_{L^{p}(\Omega_1^\text{ext};\,\mu_s)}^p&\le  (1-s) \int\limits_{\Omega_1^\text{ext}} \frac{\abs{u(x)}^p}{\distw{x}{\Omega}^s} \d x\le 2^{d} (1-s) \, c_1\int\limits_{\Omega_1^\text{ext}} \int\limits_{\Omega} \frac{\abs{u(x)}^p}{\abs{x-z}^{d+s}} \d z \d x\\
	&\le 2^{d+p} (1-s) \, c_1\Big[ \int\limits_{\Omega_1^\text{ext}} \int\limits_{\Omega} \frac{\abs{u(x)-u(z)}^p}{\abs{x-z}^{d+s}} \d z \d x+\int\limits_{\Omega_1^\text{ext}} \int\limits_{\Omega} \frac{\abs{u(z)}^p}{\abs{x-z}^{d+s}} \d z \d x \Big]\\
	&=: (\text{I})+ (\text{II}).
	\end{align*}
	The term $(\text{I})$ is estimated easily via
	\begin{equation*}
	(\text{I})\le 2^{d+p^{\star}}\, c_1 \, (\diam{\Omega}+ 1)^{p^{\star}-1}\, [u]_{\vs{s}{p}(\Omega\,|\, \Omega_1^{\ext})}^p.
	\end{equation*}
	An application of \autoref{lem:distance_upper} with $B=\Omega_1^{\text{ext}}$ and \autoref{th:trace_classical_approximation_inequality_1} respectively \autoref{th:Hardy} in the case $p=1$ yields the following bound on the term $(\text{II})$. 
	\begin{align*}
	(\text{II})&\le 	2^{d+p} (1-s)\, c_1\, \frac{\omega_{d-1}}{s}\,\int\limits_{\Omega} \frac{\abs{u(z)}^p}{d_z^s} \d z \le  2^{d+p^{\star}} \, \frac{\omega_{d-1}}{s_\star}\,c_1 c_2\big([u]_{W^{s,p}(\Omega)}^p + \norm{u}_{L^p(\Omega)}^p \big)
	\end{align*} 
	Here $c_2>0$ is the constant from \autoref{th:trace_classical_approximation_inequality_1} in the case $p>1$. In the case $p=1$ let $c_2$ be the constant from \autoref{th:Hardy}. For the estimate of $\Omega^1_{\text{ext}}$, we define $\diam{\Omega}+1=: c_3\ge 1$ and notice that $d_x\ge  \tfrac{\abs{x-z}}{c_3}$ as well as $\Omega_{\text{ext}}^1 \subset B_1(z)^c$ holds for any $z\in \Omega$ and $x\in \Omega_{\text{ext}}^1$. Thus,
	\begin{align}\label{eq:trace_estimate_Lp_Omega^1}
	\norm{\trns u}_{L^{p}(\Omega^1_{\text{ext}};\,\mu_s)}^p&\le 2^{p^{\star}}(1-s)  \int\limits_{\Omega^1_{\text{ext}}} \fint\limits_\Omega \frac{\abs{u(x)-u(z)}^p+\abs{u(z)}^p  }{d_x^s (1+d_x)^{d+s(p-1)}} \d z \d x\nonumber \\
	&\le 2^{p^{\star}} c_3^{d+sp^{\star}}(1-s) \int\limits_{\Omega^1_{\text{ext}}} \fint\limits_\Omega \frac{\abs{u(x)-u(z)}^p}{\abs{x-z}^{d+sp}} \d z \d x\nonumber\\
	&\qquad+  2^{p^{\star}}(1-s)  \int\limits_{\Omega^1_{\text{ext}}} \fint\limits_\Omega \frac{\abs{u(z)}^p  }{d_x^{d+sp}} \d z \d x\nonumber\\
	&\le \tfrac{ 2^{p^{\star}} c_3^{d+sp^{\star}}}{\abs{\Omega}} [u]_{\vs{s}{p}(\Omega\,|\Omega_{\text{ext}}^1)}^p+ \tfrac{\omega_{d-1}\,2^{p^{\star}}c_3^{d+sp^{\star}} (1-s)}{sp\abs{\Omega}} \norm{u}_{L^p(\Omega)}^p.
	\end{align}
	In the last step we used
	\begin{align}\label{eq:distance_integrated_at_infinity}
		\int\limits_{\Omega_{\text{ext}}^1} \frac{1}{d_x^{d+sp}}\d x \le c_3^{d+sp} \int\limits_{B_{1}(x_0)^c}\frac{1}{\abs{x_0-x}^{d+sp}}\d x = c_3^{d+sp} \frac{\omega_{d-1}}{sp},
	\end{align}
	where $x_0 \in \Omega$ is a fixed point. Combining the estimates of $(\text{I})$, $(\text{II})$ as well as \eqref{eq:trace_estimate_Lp_Omega^1} yields \eqref{eq:trace_Lp_continuity}. 
\end{proof}
\begin{proposition}\label{prop:trace_seminorm_estimate}
	Let $d\in \N$, $\Omega\subset \R^d$ be a bounded Lipschitz domain, $s_\star\in(0,1)$, $1<p_\star< p^\star<\infty$. There exists a constant $C=C(\Omega, p_\star,p^\star, s_\star)>0$ such that for any $s\in (s_\star, 1)$, $p_\star\le p\le p^\star$ and $u\in \VspOm$ 
\begin{equation}\label{eq:trace_continuity}
	[\trns u]_{\cT^{s,p}(\Omega^c)}\le C\, \norm{u}_{\vs{s}{p}(\Omega\,|\R^d)}.
	\end{equation}
\end{proposition}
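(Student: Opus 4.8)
The plan is to split $\Omega^c=\Omega_1^{\text{ext}}\cup\Omega^1_{\text{ext}}$ as in \eqref{eq:def-Omega-decomp} and to estimate the three resulting pieces of the double integral defining $[\trns u]_{\cT^{s,p}(\Omega^c)}^p$ separately; throughout I abbreviate $\rho(x,y):=(\abs{x-y}+d_x+d_y)\wedge 1$. On the far pieces, where $d_x\ge 1$ or $d_y\ge 1$, one has $\rho(x,y)=1$, so by the elementary inequality $\abs{u(x)-u(y)}^p\le 2^{p-1}(\abs{u(x)}^p+\abs{u(y)}^p)$ these pieces are $\lesssim\mu_s(\Omega^c)\,\norm{\trns u}_{L^{p}(\Omega^c;\,\mu_s)}^p$. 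Since $\mu_s(\Omega^c)$ is bounded uniformly for $s\ge s_\star$ (a short computation with the coarea formula near $\partial\Omega$ and the bound $d_x\ge1$ on $\Omega^1_{\text{ext}}$) and $\norm{\trns u}_{L^{p}(\Omega^c;\,\mu_s)}\lesssim\norm{u}_{\vs{s}{p}(\Omega\,|\R^d)}$ by the already proven $L^p$-trace estimate \autoref{prop:trace_Lp_estimate}, the far pieces contribute $\lesssim\norm{u}_{\vs{s}{p}(\Omega\,|\R^d)}^p$. The real content is the near--near piece $N:=\iint_{\Omega_1^{\text{ext}}\times\Omega_1^{\text{ext}}}\rho(x,y)^{-(d+s(p-2))}\abs{u(x)-u(y)}^p\,\mu_s(\d x)\,\mu_s(\d y)$.

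For $N$ I would attach to each $x\in\Omega_1^{\text{ext}}$, via the uniform interior cone condition (the mechanism behind \autoref{lem:distance_lower}), a ball $B_x\subset\Omega$ of radius $\asymp d_x$ and at distance $\asymp d_x$ from $x$, with $x\mapsto B_x$ measurable, such that $d_z\asymp d_x$ for every $z\in B_x$; consequently $\abs{x-z}\asymp d_x$ and $\abs{B_x}\asymp d_x^d$ on $B_x$. Writing $u_{B_x}:=\fint_{B_x}u$ and applying the triangle inequality with the two intermediate values $u_{B_x}$ and $u_{B_y}$ gives $N\lesssim N_1+N_2+N_3$, where $N_1$ carries the factor $\abs{u(x)-u_{B_x}}^p$, $N_3$ carries $\abs{u_{B_y}-u(y)}^p$ (symmetric to $N_1$), and $N_2$ carries $\abs{u_{B_x}-u_{B_y}}^p$. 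It is essential to use these averaged intermediate values rather than a single interior point: for $p\ge 2$ the crude bound $\rho(x,y)\ge d_x$ loses far too much.

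For $N_1$, Jensen's inequality gives $\abs{u(x)-u_{B_x}}^p\le\fint_{B_x}\abs{u(x)-u(z)}^p\,\d z$, and the $y$-integration is absorbed by the auxiliary bound $\int_{\Omega^c}\rho(x,y)^{-(d+s(p-2))}\,\mu_s(\d y)\lesssim d_x^{-s(p-1)}$ for $d_x\le 1$, which I would prove by a dyadic decomposition in $\abs{x-y}$ together with $\mu_s(\{d_y<\delta\})\asymp(1-s)\delta^{1-s}$ near $\partial\Omega$; this is precisely where the exponents of $\cT^{s,p}$ get matched to those of $\vs{s}{p}$. The powers of $d_x$ collected from $\mu_s(\d x)\asymp(1-s)d_x^{-s}\,\d x$ on $\Omega_1^{\text{ext}}$, from $\abs{B_x}^{-1}\asymp d_x^{-d}$, and from the auxiliary bound sum to $-d-sp$, so, using $\abs{x-z}\asymp d_x$ on $B_x$, $N_1$ becomes a part of $(1-s)\iint_{\Omega^c\times\Omega}\abs{u(x)-u(z)}^p\abs{x-z}^{-d-sp}\,\d z\,\d x\le\norm{u}_{\vs{s}{p}(\Omega\,|\R^d)}^p$; $N_3$ is identical.

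For $N_2$, the geometric key is that $\abs{z-w}+d_z+d_w\le c(\Omega)\,(\abs{x-y}+d_x+d_y)$ whenever $z\in B_x$ and $w\in B_y$ (from $\abs{z-x}\lesssim d_x$, $d_z\lesssim d_x$ and the analogues in $y$), so $\rho(x,y)^{-(d+s(p-2))}\lesssim((\abs{z-w}+d_z+d_w)\wedge 1)^{-(d+s(p-2))}$, which is exactly the weight in \autoref{th:trace_classical_approximation_inequality_1}. After Jensen ($\abs{u_{B_x}-u_{B_y}}^p\le\fint_{B_x}\fint_{B_y}\abs{u(z)-u(w)}^p$), Fubini, and the bound $\int_{\{x\,:\,z\in B_x\}}\abs{B_x}^{-1}\mu_s(\d x)\lesssim(1-s)d_z^{-s}$ (valid since $z\in B_x$ forces $d_x\asymp d_z$, so the $x$-integration runs over a single dyadic scale), $N_2$ is dominated by a constant times $\iint_{\Omega\times\Omega}\abs{u(z)-u(w)}^p((\abs{z-w}+d_z+d_w)\wedge 1)^{-(d+s(p-2))}\,\tau_s(\d z)\,\tau_s(\d w)$ with $\tau_s$ as in \eqref{eq:tau_s_trace}, which is $\lesssim\norm{u}_{W^{s,p}(\Omega)}^p\le\norm{u}_{\vs{s}{p}(\Omega\,|\R^d)}^p$ by \autoref{th:trace_classical_approximation_inequality_1}. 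Collecting the bounds for $N_1,N_2,N_3$ and the far pieces, and verifying that the constants depend only on $\Omega,s_\star,p_\star,p^\star$, yields \eqref{eq:trace_continuity}. I expect the main difficulty to be exactly this bookkeeping in $N$: designing the intermediate averages so that the boundary-layer terms $N_1,N_3$ land in the $\vs{s}{p}$-energy while the interior term $N_2$ matches \autoref{th:trace_classical_approximation_inequality_1}, all the while keeping precise track of the powers of $d_x$ and of the factors $1-s$.
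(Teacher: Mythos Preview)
Your plan is correct and leads to \eqref{eq:trace_continuity}, but it is organized differently from the paper's proof. The paper also isolates the near--near piece and handles the far part via \autoref{prop:trace_Lp_estimate}, but for the near--near part it proceeds in two stages: first it applies \autoref{lem:distance_lower} twice to rewrite each factor $d_x^{-s}$ as an integral over $\Omega$, obtaining a quadruple integral in $(x,y,z,w)\in(\Omega^c)^2\times\Omega^2$, then inserts $\pm u(z)\pm u(w)$ with \emph{pointwise} values. The resulting interior--interior term goes to \autoref{th:trace_classical_approximation_inequality_1} (this matches your $N_2$), while the cross term (their $(\mathrm{IV})$, with $|u(x)-u(w)|^p$ for $x\in\Omega^c$, $w\in\Omega$) is reduced via a Whitney decomposition of $\Omega^c$ together with reflected cubes inside $\Omega$ (Appendix~A) and a further splitting into terms $(\mathrm{VI})$ and $(\mathrm{VII})$. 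The key computation there, the bound on $I(Q_1)$ in \eqref{eq:estimateVII_help3}--\eqref{eq:estimateVII_help4}, is exactly the same integral as your auxiliary bound $\int_{\Omega^c}\rho(x,y)^{-(d+s(p-2))}\mu_s(\d y)\lesssim d_x^{-s(p-1)}$.

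What your averaging device buys is that the cross terms $N_1,N_3$ land directly in $[u]_{\vs{s}{p}(\Omega\,|\,\Omega^c)}^p$ after Jensen and one use of the auxiliary bound, so you bypass the reflected-cube machinery entirely; the only place a bounded-overlap argument survives is the estimate $\int_{\{x:\,z\in B_x\}}|B_x|^{-1}\mu_s(\d x)\lesssim(1-s)d_z^{-s}$ in $N_2$, which is elementary once $d_x\asymp d_z$ on that set. Two small points to make rigorous: (i) a measurable selection $x\mapsto B_x$ (e.g.\ define $B_x$ in local Lipschitz charts and patch, or simply replace $B_x$ by the truncated interior cone at the nearest boundary point as in the proof of \autoref{lem:distance_lower}); (ii) your auxiliary bound, whose proof is precisely the computation in \eqref{eq:estimateVII_help4} of the paper and produces a factor $1/(s(p-1))$, so the restriction $p\ge p_\star>1$ is used here. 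With these in place, your argument is a cleaner route to the same estimate.
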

\begin{proof}
	We fix $\rho:= \inr{\Omega}>0$ and divide the integration domain of $[\trns u]_{\cT^{s,p}(\Omega^c)}^p$ into $\Omega_\rho^{\text{ext}}\times \Omega_\rho^{\text{ext}}$, $\Omega^c\times \Omega^\rho_{\text{ext}}$ and $\Omega^\rho_{\text{ext}}\times \Omega^c$. By symmetry the estimates for $\Omega^c\times \Omega^\rho_{\text{ext}}$ and $\Omega^\rho_{\text{ext}}\times \Omega^c$ are equivalent. Thus, we settle on $\Omega^\rho_{\text{ext}}\times \Omega^c$. Since $\abs{x-y}+d_x+d_y\ge \rho$ for any $x\in \Omega^c$ and $y\in \Omega_{\text{ext}}^\rho$, we have 
	\begin{align}
	[\trns u]_{\cT^{s,p}(\Omega^\rho_{\text{ext}}\,|\, \Omega^c)}^p &\le   \int\limits_{\Omega^\rho_{\text{ext}}} \int\limits_{\Omega^c}  \frac{\abs{u(x)-u(y)}^{p}}{(1\wedge \rho)^{d+p^\star}}  \mu_s(\d x) \mu_s(\d y)\nonumber\\
	&\le \frac{2^{p} (1-s)^2}{(1\wedge \rho)^{d+p^\star}} \, \Bigg[\,\, \int\limits_{\Omega^c}  \frac{\abs{u(x)}^{p}}{d_x^s (1+d_x)^{d+s(p-1)}} \int\limits_{\Omega^\rho_{\text{ext}} } \frac{1}{ d_y^s(1+d_y)^{d+s(p-1)}}  \d y \d x\nonumber\\
	&\qquad+ \int\limits_{\Omega^\rho_{\text{ext}}}  \frac{\abs{u(y)}^{p}  }{ d_y^s (1+d_y)^{d+s(p-1)} }\int\limits_{\Omega^c} \frac{1}{d_x^s (1+d_x)^{d+s(p-1)}}\d x \d y\Bigg].\label{eq:trace_estimate_Omega^1_Omega^c_prev}
	\end{align}
	After covering $\Omega_1^{\text{ext}}$ by finitely many balls, a calculation similar to \eqref{eq:distance_integral_estimate} yields a constant $c_1>0$, independent of $s$, such that $\int_{\Omega_1^{\text{ext}}} d_x^{-s}\d x \le c_1(1-s)^{-1}$. By possibly enlarging the constant, we assume $c_1\ge \omega_{d-1}(1+\diam(\Omega))^{d+p^\star} $. With this observation and \eqref{eq:distance_integrated_at_infinity} we find:
	\begin{equation}\label{eq:trace_estimate_Omega^1_Omega^c_prev_2}
	\int\limits_{\Omega^c} \frac{1}{d_x^s (1+d_x)^{d+s(p-1)}}\d x\le \int\limits_{\Omega_1^{\text{ext}}} \frac{1}{d_x^s }\d x+ \int\limits_{ \Omega_{\text{ext}}^1}d_x^{-d-sp}\d x\le \frac{c_1 }{s(1-s)}
	\end{equation}
	Now, we combine this estimate with \autoref{prop:trace_Lp_estimate}:
	\begin{equation*}
	[\trns u]_{\cT^{s,p}(\Omega^\rho_{\text{ext}}\,|\, \Omega^c)}^p\le \frac{2^{p^\star+1}}{(1\wedge \rho)^{d+p^\star}}  \frac{c_1}{s} \norm{\trns u}_{L^{p}(\Omega^c;\,\mu_s)}^p\le \frac{2^{p^\star+1}}{(1\wedge \rho)^{d+p^\star}}  \frac{c_1 c_2}{s_\star} \norm{u}_{\VspOm}^p
	\end{equation*}
	Here $c_2=c_2(\Omega, p^\star, s_\star)>0$ is the constant from \autoref{prop:trace_Lp_estimate}. 
	
	\medskip
	
	Lastly, we prove the inequality for the more delicate part of the seminorm, where higher order singularities close to the boundary may occur. For $x,y\in \Omega_{\rho}^{\text{ext}}$ we have $(\abs{x-y}+d_x+d_y)\le 4\rho + \diam(\Omega)+1=:c_3$ and, thus, $(\abs{x-y}+d_x+d_y)\wedge 1 \ge c_3^{-1}(\abs{x-y}+d_x+d_y)$. We apply \autoref{lem:distance_lower} twice with the monotone decreasing function $f(r)= (\abs{x-y}+r/2+d_y)^{-d-s(p-2)}$ and then again with the function $f(r)= (\abs{x-y}+ \abs{x-z}/2+r/2)^{-d-s(p-2)}$. Let $c_4=c_4(\Omega,d)>0$, $r>0$ be the constants from \autoref{lem:distance_lower}. This yields
	\begin{align*}
	&[\trns u]_{\cT^{s,p}(\Omega_\rho^{\text{ext}}\,|\, \Omega_\rho^\text{ext})}^p \\
	&\le (1-s) c_3^{d+p}c_4 \int\limits_{\Omega_\rho^\text{ext}}\int\limits_{\Omega_\rho^{\text{ext}}} \int\limits_{\Omega}  \frac{\abs{u(x)-u(y)}^{p}  }{\abs{x-z}^{d+s}  (\abs{x-y}+\abs{x-z}/2+d_y)^{d+s(p-2)} } \d z\d x \mu_s(\d y)\\
	&\le (1-s)^2 c_3^{d+p} c_4^2 \int\limits_{\Omega_\rho^\text{ext}}\int\limits_{\Omega_\rho^{\text{ext}}} \int\limits_{\Omega}\int\limits_{ \Omega}  \frac{\abs{u(x)-u(y)}^{p} \abs{x-z}^{-d-s}  \abs{y-w}^{-d-s} }{ (\abs{x-y}+(\abs{x-z}+\abs{y-w})/2)^{d+s(p-2)}} \d w\d z\d x \d y\\
	&\le 2^{p^\star} 4^{d+p^\star} c_3^{d+p^\star}c_4^2 \Big( (\text{III})+ 2\,(\text{IV})\Big).
	\end{align*}
	Here we added $\pm u(z)\pm u(w)$ and used triangle inequality. The terms are
	\begin{align*}
	(\text{III})&:=  (1-s)^2\, \int\limits_{\Omega}\int\limits_{ \Omega} \abs{u(z)-u(w)}^{p} a(z,w) \d w \d z,\\
	a(z,w)&:=\int\limits_{ \Omega_\rho^{\text{ext}}}\int\limits_{ \Omega_\rho^{\text{ext}}} \frac{ 1}{\abs{x-z}^{d+s}  \abs{y-w}^{d+s}(\abs{x-y}+2\abs{x-z}+2\abs{y-w})^{d+s(p-2)}}\d x\d y,\\
	(\text{IV})&:=  (1-s)^2\,\int\limits_{\Omega_\rho^\text{ext}} \int\limits_{\Omega} \abs{u(x)-u(w)}^{p} b(x,w) \d w \d x,\\
	b(x,w)&:=  \int\limits_{ \Omega_\rho^{\text{ext}}} \int\limits_{\Omega}\frac{ 1}{\abs{x-z}^{d+s}  \abs{y-w}^{d+s} (\abs{x-y}+2\abs{x-z}+2\abs{y-w})^{d+s(p-2)}}\d z\d y.
	\end{align*}
	\textit{Estimate of (III):} Our goal is to find an appropriate estimate of kernel $a$ to apply \autoref{th:trace_classical_approximation_inequality_1}. Notice that $\abs{x-y}+2\abs{x-z}+2\abs{y-w}\ge \abs{z-w}+\abs{x-z}+\abs{y-w}$ for any $x,y,w,z\in \R^d$. Thus, for any $z,w\in \Omega$
	\begin{equation*}
	a(z,w)\le \int\limits_{ \Omega_\rho^{\text{ext}}}\int\limits_{ \Omega_\rho^{\text{ext}}} \frac{ 1}{\abs{x-z}^{d+s}  \abs{y-w}^{d+s}(\abs{z-w}+\abs{x-z}+\abs{y-w})^{d+s(p-2)}}\d x\d y.
	\end{equation*}
	Now, we apply \autoref{lem:distance_upper} twice with $B=\Omega_\rho^{\text{ext}}$. We use the function $f(t_1,t_2)= (\abs{z-w}+t_1+t_2)^{-d-s(p-2)}$ which is decreasing in both $t_1$ and $t_2$. Thereby, 
	\begin{equation}\label{eq:estimate_for_a}
	a(z,w)\le \frac{\omega_{d-1}^2}{s^2}\, \frac{1}{d_z^s\,d_w^s(\abs{z-w}+d_z+d_w)^{d+s(p-2)}}.
	\end{equation}
	This yields the desired estimate for $(\text{III})$ via \autoref{th:trace_classical_approximation_inequality_1}:
	\begin{align*}
	(\text{III})&\le \frac{\omega_{d-1}^2}{s^2}\,(1-s)^2\, \int\limits_{\Omega}\int\limits_{ \Omega} \frac{\abs{u(z)-u(w)}^{p} }{d_z^s\,d_w^s(\abs{z-w}+d_z+d_w)^{d+s(p-2)}} \d w \d z\\
	&\le c_5 \,\frac{\omega_{d-1}^2}{s^2} \, \norm{u}_{W^{s,p}(\Omega)}^p\le c_5 \,\frac{\omega_{d-1}^2}{s_\star^2} \, \norm{u}_{\VspOm}^p
	\end{align*}
	Here $c_5=c_5(d, \Omega, p_\star, p^\star, s_\star)>0$ is the constant from \autoref{th:trace_classical_approximation_inequality_1}.
	
	\textit{Estimate of (IV):} Our approach to estimate $(\text{IV})$ is similar to the proof of \cite[Theorem 5]{DyKa19}. Analogous to the estimate of $a$, see \eqref{eq:estimate_for_a}, we find 
	\begin{equation*}
	b(x,w)\le \frac{\omega_{d-1}^2}{s^2}\, \frac{1}{d_x^s\,d_w^s(\abs{x-w}+d_x+d_w)^{d+s(p-2)}}
	\end{equation*}
	for any $x\in \Omega_\rho^{\text{ext}}$ and $w\in \Omega$. We define
	\begin{align*}
		(\text{V}):= (1-s)^2\,\int\limits_{\Omega_\rho^\text{ext}} \int\limits_{\Omega}  \frac{\abs{u(x)-u(w)}^{p}}{d_x^s\,d_w^s(\abs{x-w}+d_x+d_w)^{d+s(p-2)}} \d w \d x.
	\end{align*}
	The previous estimate of $b$ yields $(\text{IV})\le \omega_{d-1}^2s_\star^{-2} (\text{V})$.
	
	\textbf{Claim: }We will show that there exists a constant $c_6=c_6(d,\Omega,\rho,p^\star)>0$ such that
	\begin{equation*}
	(\text{V})\le c_6\, \Bigg( \frac{1}{s^2\,(p_\star-1)} [u]_{\vs{s}{p}(\Omega\,|\, \Omega_\rho^{\text{ext}})}^p+ (1-s)^2\, \int\limits_{\Omega}\int\limits_{\Omega} \frac{\abs{u(z)-u(w)}^{p} }{d_z^s\,d_w^s(\abs{z-w}+d_z+d_w)^{d+s(p-2)}} \d w \d z\Bigg).
	\end{equation*}
	Let $\cW(\R^d\setminus \overline{\Omega})$ be the Whitney decomposition from \autoref{sec:whitney} and recall that $\rho= \inr{\Omega}$. We define the set of Whitney cubes $Q\in \cW(\R^d \setminus \overline{\Omega})$ with $\diam(Q)\le \rho$ by $\cW_\rho(\R^d\setminus \overline{\Omega})$. As in \autoref{sec:whitney} and \cite{DyKa19} we denote by $\widetilde{Q}\subset \Omega$ the reflected Whitney cube for any cube $Q\in \cW_\rho(\R^d\setminus \overline{\Omega})$. The collection of reflected Whitney cubes satisfies a bounded overlap property, see \eqref{eq:bounded_overlap}. Let $N\in \N$ be the constant from the bounded overlap property. Furthermore, the distance to the boundary as well as the diameter of the reflected cubes are comparable to the original cubes with the constant $M>0$ from \eqref{eq:reflected_cubes_comparable}. By the covering properties of the Whitney cubes \eqref{eq:prop_cubes_1} and the reflecting cubes \eqref{eq:prop1_reflec_cubes} we find
	\begin{equation}\label{eq:estimate_V}
	(\text{V})\le \sum\limits_{Q_1, Q_2\in \cW_\rho(\R^d\setminus\overline{\Omega})} (1-s)^2\,\int\limits_{Q_1} \int\limits_{\widetilde{Q}_2}  \frac{\abs{u(x)-u(w)}^{p}}{d_x^s\,d_w^s(\abs{x-w}+d_x+d_w)^{d+s(p-2)}} \d w \d x. 
	\end{equation}
	For the moment we fix two cubes $Q_1,Q_2\in \cW(\R^d\setminus \overline{\Omega})$ satisfying $ \diam{Q_1}, \diam{Q_2}\le \rho $. For each $x\in Q_1$, $w\in \widetilde{Q}_2$ we define
	\begin{equation*}
	z(x,w):= q_{\widetilde{Q}_1} + \Big( \frac{x-q_{Q_1}}{2l(Q_1)} + \frac{w-q_{\widetilde{Q}_2}}{2l(\widetilde{Q}_2)} \Big)\,l(\widetilde{Q}_1)\in \widetilde{Q}_1.
	\end{equation*}
	The map $z$ connects points in $Q_1, \widetilde{Q}_2$ with points in $\widetilde{Q}_1$ in a continuous way. We will use it for a change of variables in either $x$ or $w$. Therefore, 
	\begin{align}
	&\int\limits_{Q_1} \int\limits_{\widetilde{Q}_2}  \frac{\abs{u(x)-u(w)}^{p}}{d_x^s\,d_w^s(\abs{x-w}+d_x+d_w)^{d+s(p-2)}} \d w \d x\le  2^p \int\limits_{Q_1} \int\limits_{\widetilde{Q}_2}  \frac{\abs{u(z(x,w))-u(w)}^{p}}{d_x^s\,d_w^s(\abs{x-w}+d_x+d_w)^{d+s(p-2)}} \d w \d x\nonumber\\
	&\quad+ 2^p \int\limits_{Q_1} \int\limits_{\widetilde{Q}_2}  \frac{\abs{u(x)-u(z(x,w))}^{p}}{d_x^s\,d_w^s(\abs{x-w}+d_x+d_w)^{d+s(p-2)}} \d w \d x
	=: 2^p\Big((\text{VI})+ (\text{VII})\Big).\label{eq:estimate_one_term_V}
	\end{align}
	We make a few observations before estimating $(\text{VI})$. For $x\in Q_1$ and $w\in \widetilde{Q}_2$ we have $d_x\ge M^{-1}d_{z(x,w)}$ as well as
	\begin{align*}
	&\abs{x-w}+ d_x+d_w\ge M^{-1}\big( \abs{x-w}+ \distw{Q_1}{\widetilde{Q}_1} \big) + (1-M^{-1}) \abs{x-w} + d_w\\
	&\qquad\ge M^{-1}\big( \distw{Q_1}{\widetilde{Q}_2}+ \distw{Q_1}{\widetilde{Q}_1} \big) + (1-M^{-1}) \big( \distw{Q_1}{\partial \Omega}+ \distw{\widetilde{Q}_2}{\partial \Omega} \big) + d_w\\
	&\qquad\ge  M^{-1} \distw{\widetilde{Q}_1}{\widetilde{Q}_2}+ (1-M^{-1}) M^{-1} \distw{\widetilde{Q}_1}{\partial \Omega}+ (2-M^{-1})\distw{\widetilde{Q}_2}{\partial \Omega} \\
	&\qquad\ge  \frac{(1-M^{-1})M^{-1}\Big( \distw{\widetilde{Q}_1}{\widetilde{Q}_2}+  \sum_{i=1}^2 \distw{\widetilde{Q}_i}{\partial \Omega} \Big)(\abs{z(x,w)-w}+d_{z(x,w)}+ d_w)}{\big(\diam{\widetilde{Q}_1}+\distw{\widetilde{Q}_1}{\widetilde{Q}_2}+ \diam{\widetilde{Q}_2}\big)+ \sum_{i=1}^2 \big( \distw{\widetilde{Q}_i}{\partial \Omega}+\diam{\widetilde{Q}_i} \big) }\\
	&\qquad\ge (1-M^{-1})M^{-1}\, \frac{2}{3} \big( \abs{z(x,w)-w}+d_{z(x,w)}+ d_w \big).
	\end{align*}
	We define $c_7:= 3/2 \, M^2(M-1)^{-1}>1$ and use the previous calculation to estimate $(\text{VI})$ by 
	\begin{align}\label{eq:first_term_in_whitney}
	(\text{VI})&\le M^s\,c_7^{d+s(p-2)} \int\limits_{\widetilde{Q}_2} \int\limits_{Q_1}  \frac{\abs{u(z(x,w))-u(w)}^{p}}{d_{z(x,w)}^s\,d_w^s(\abs{z(x,w)-w}+d_{z(x,w)}+d_w)^{d+s(p-2)}}\d x \d w \nonumber\\
	&\le M^s\,c_7^{d+s(p-2)} \Big(\frac{2l(Q_1)}{l(\widetilde{Q}_1)}\Big)^d \int\limits_{\widetilde{Q}_2} \int\limits_{\widetilde{Q}_1}  \frac{\abs{u(z)-u(w)}^{p}}{d_{z}^s\,d_w^s(\abs{z-w}+d_{z}+d_w)^{d+s(p-2)}}  \d z\d w\nonumber\\
	&\le M\,c_7^{d+(0\vee (p^\star-2))} (2M)^d\int\limits_{\widetilde{Q}_1} \int\limits_{\widetilde{Q}_2}  \frac{\abs{u(z)-u(w)}^{p}}{d_{z}^s\,d_w^s(\abs{z-w}+d_{z}+d_w)^{d+s(p-2)}} \d w \d z.
	\end{align}
	Here we used the change of variables $z=z(x,w)$. We set $c_8:= 2^{d+p^\star}\,c_7^{d+(0\vee (p^\star-2))} M^{d+1}$. Now we sum $(\text{VI})$ over all Whitney cubes in $\cW_\rho(\R^d\setminus \overline{\Omega})$. By the bounded overlap property of the Whitney decomposition, see \eqref{eq:bounded_overlap}, we have
	\begin{align}
	& \sum \limits_{Q_1, Q_2\in \cW_\rho(\R^d\setminus\overline{\Omega})} (1-s)^2\,2^p (\text{VI}) \nonumber \\
	& \quad \le c_8 \,(1-s)^2 \hspace{-0.5cm}\sum\limits_{Q_1, Q_2\in \cW_\rho(\R^d\setminus\overline{\Omega})}\,  \int\limits_{\widetilde{Q}_1} \int\limits_{\widetilde{Q}_2}  \frac{\abs{u(z)-u(w)}^{p}}{d_{z}^s\,d_w^s(\abs{z-w}+d_{z}+d_w)^{d+s(p-2)}} \d w \d z\nonumber\\
	& \quad \le N^2\,c_8 \,(1-s)^2  \int\limits_{\Omega}\int\limits_{\Omega} \frac{\abs{u(z)-u(w)}^{p}}{d_{z}^s\,d_w^s(\abs{z-w}+d_{z}+d_w)^{d+s(p-2)}} \d w \d z.\label{eq:estimate_for_VI_complete}
	\end{align}
	Now we estimate $(\text{VII})$.	We make a few observations upon the choice of the Whitney decomposition and reflected cubes in \autoref{sec:whitney}. For $x\in Q_1$ and $w\in \widetilde{Q}_2$ we have
	\begin{align*}
	d_x&\ge (1+M^{-1})^{-1} (d_x+d_{z(x,w)})\ge  (1+M^{-1})^{-1} \abs{x-z(x,w)},\\
	d_w&\ge \distw{\widetilde{Q}_2}{\partial \Omega},\\
	d_x+d_w+\abs{x-w}&\ge \distw{Q_1}{\partial \Omega}+ \distw{\widetilde{Q}_2}{\partial \Omega}+ \distw{Q_1}{\widetilde{Q}_2},\\
	\abs{x-z(x,w)}&\le \distw{Q_1}{\widetilde{Q}_1}\le M \, \distw{Q_1}{\partial \Omega}.
	\end{align*}
	We set 
	\begin{equation*}
	J(Q_1, Q_2):= \frac{ \distw{Q_1}{\partial \Omega}^{d+s(p-1)}}{\distw{\widetilde{Q}_2}{\partial \Omega}^s  \big( \distw{Q_1}{\partial \Omega}+ \distw{\widetilde{Q}_2}{\partial \Omega}+ \distw{Q_1}{\widetilde{Q}_2} \big)^{d+s(p-2)}}.
	\end{equation*}
	Therefore, 
	\begin{align*}
	(\text{VII})&\le(1+M^{-1})^s\,M^{d+s(p-1)} 	J(Q_1, \widetilde{Q}_2)\,\int\limits_{Q_1} \int\limits_{\widetilde{Q}_2}  \frac{\abs{u(x)-u(z(x,w))}^{p}}{\abs{x-z(x,w)}^{d+sp}} \d w \d x  \\
	&\le (1+M^{-1})\,M^{d+p-1} 	J(Q_1, \widetilde{Q}_2)\,\Big(\frac{2\,l(\widetilde{Q}_2)}{l(\widetilde{Q}_1)}\Big)^d\,\int\limits_{Q_1} \int\limits_{\widetilde{Q}_1}  \frac{\abs{u(x)-u(z)}^{p}}{\abs{x-z}^{d+sp}} \d z \d x.
	\end{align*}
	Here we used the change of variables $z:=z(x,w)$. Set $c_9:=  (1+M^{-1})\,M^{d+p^\star-1} 2^{d+p^\star}$. By \eqref{eq:bounded_overlap}, 
	\begin{equation}\label{eq:first_step_estimate_(VII)}
	(1-s)^2 \mspace{-35mu}\sum\limits_{Q_1, Q_2\in \cW_\rho(\R^d\setminus\overline{\Omega})}\mspace{-20mu}	2^p (\text{VII})\le c_9\, (1-s)^2 \mspace{-35mu}\sum\limits_{Q_1, Q_2\in \cW_\rho(\R^d\setminus\overline{\Omega})} \mspace{-20mu}	J(Q_1, \widetilde{Q}_2)\,\Big(\frac{l(\widetilde{Q}_2)}{l(\widetilde{Q}_1)}\Big)^d\,\int\limits_{Q_1} \int\limits_{\widetilde{Q}_1}  \frac{\abs{u(x)-u(z)}^{p}}{\abs{x-z}^{d+sp}} \d z \d x.
	\end{equation}
	Therefore, it is sufficient to prove that 
	\begin{equation*}
	I(Q_1):=\sum\limits_{Q_2\in \cW_\rho(\R^d\setminus\overline{\Omega})}J(Q_1, \widetilde{Q}_2)\,\Big(\frac{l(\widetilde{Q}_2)}{l(\widetilde{Q}_1)}\Big)^d
	\end{equation*}
	is bounded independent of $Q_1$. We fix $Q_1\in \cW_\rho(\R^d\setminus \Omega)$ and set $a:= \distw{Q_1}{\partial \Omega}$. Let $\hat{q}_{Q_1}\in \partial \Omega$ be a minimizer of the distance of $q_{Q_1}$ to $\partial \Omega$, \ie $\abs{\hat{q}_{Q_1}-q_{Q_1}}= \distw{q_{Q_1}}{\partial \Omega}$. By the properties of the Whitney cubes, we have for any $w\in \widetilde{Q}_2$
	\begin{align}
	\distw{\widetilde{Q}_2}{\partial \Omega}&\ge \frac{1}{2}\big( \distw{\widetilde{Q}_2}{\partial \Omega} + \diam{\widetilde{Q}_2}\big)\ge \frac{1}{2} d_w,\label{eq:estimateVII_help1}\\
	\abs{w-b}&\le \abs{w-q_{Q_1}}+ \abs{q_{Q_1}-\hat{q}_{Q_1}}\le \diam{Q_1}+ \distw{\widetilde{Q}_2}{Q_1} + \distw{q_{Q_1}}{\partial \Omega}\nonumber\\
	&\le \distw{\widetilde{Q}_2}{Q_1}+ 3 \distw{Q_1}{\partial \Omega}\le 4 \distw{\widetilde{Q}_2}{Q_1}.\label{eq:estimateVII_help2}
	\end{align}
	We estimate $I(Q_1)$ using the properties of the Whitney cubes, \eqref{eq:estimateVII_help1}, \eqref{eq:estimateVII_help2} and \eqref{eq:bounded_overlap}
	\begin{align}
	I(Q_1)&\le M^d 4^d \sum\limits_{Q_2\in \cW_\rho(\R^d\setminus\overline{\Omega})}\frac{ \diam(\widetilde{Q}_2)^d\, a^{s(p-1)}  }{\distw{\widetilde{Q}_2}{\partial \Omega}^s  \big( a+ \distw{\widetilde{Q}_2}{\partial \Omega}+ \distw{Q_1}{\widetilde{Q}_2} \big)^{d+s(p-2)}}\nonumber\\
	&= M^d 4^d \sum\limits_{Q_2\in \cW_\rho(\R^d\setminus\overline{\Omega})} \int\limits_{\widetilde{Q}_2}\frac{ a^{s(p-1)}  }{\distw{\widetilde{Q}_2}{\partial \Omega}^s  \big( a+ \distw{\widetilde{Q}_2}{\partial \Omega}+ \distw{Q_1}{\widetilde{Q}_2} \big)^{d+s(p-2)}} \d w\nonumber\\
	&\le 2^s\,4^{d+s(p-2)}\, M^d 4^d\,N \int\limits_{\Omega}\frac{ a^{s(p-1)}  }{d_w^s  \big( a+ d_w+ \abs{w-\hat{q}_{Q_1}} \big)^{d+s(p-2)}} \d w.\label{eq:estimateVII_help3}
	\end{align}
	We claim that the integral in the last line is bounded independent of $a$ and $\hat{q}_{Q_1}$.

	We localize the boundary in a neighborhood of $\hat{q}_{Q_1}$. Let $r_0>0$ be the localization radius, $\phi:B_{r_0}(\hat{q}_{Q_1})\to B_1(0)$ be a bi-Lipschitz flattening of the boundary. Since $\Omega$ has a uniform Lipschitz boundary. A change of variables as well as an estimate similar to \eqref{eq:distance_estimate} yields a constant $c_{10}=c_{10}(d,\Omega)\ge 1$ such that
	\begin{align*}
		\int\limits_{\Omega\cap B_{r_0}(\hat{q}_{Q_1})}\frac{ a^{s(p-1)}  }{d_w^s  \big( a+ d_w+ \abs{w-\hat{q}_{Q_1}} \big)^{d+s(p-2)}} \d w\le c_{10} \int\limits_{B_1(0)_+} \frac{a^{s(p-1)}}{w_d^s\, (a+ \abs{w})^{d+s(p-2)}} \d w.
	\end{align*}
	To calculate this integral, we apply the coarea formula, see e.g. \cite{Fed69}, with $(r,t)=(w_d, \abs{w})$ in case $d\ge 2$.
	\begin{align}
		\int\limits_{B_1(0)_+} \frac{a^{s(p-1)}}{w_d^s\, (a+ \abs{w})^{d+s(p-2)}} \d w &\le \omega_{d-2} \int\limits_{0}^{1}\int\limits_{0}^{t}\frac{ a^{s(p-1)} t^{d-2} }{r^s  \big( a+ t \big)^{d+s(p-2)}} \d r \d t\nonumber\\
		\le \frac{ \omega_{d-2} }{1-s}\int\limits_{0}^{1}\frac{ a^{s(p-1)} t^{d-1-s} }{ \big( a+ t \big)^{d+s(p-2)}}  \d t&\le  \frac{ \omega_{d-2} }{1-s}\int\limits_{0}^{1}\frac{ a^{s(p-1)}}{ \big( a+ t \big)^{1+s(p-1)}}  \d t\le \frac{ \omega_{d-2}}{(1-s)s(p-1)}\label{eq:estimateVII_help4}
	\end{align}
	Here we used:
	\begin{equation*}
		\cH^{(d-2)}\big( \{ w\in \Omega\,|\, w_d=r, \abs{w}=t \} \big)\le \omega_{d-2} \1_{r\le t} t^{d-2}
	\end{equation*}	
	A similar calculation shows the same estimate in the case $d=1$. Furthermore, the remainder of the integral on the RHS of \eqref{eq:estimateVII_help3}, \ie \begin{equation*}
		\int\limits_{\Omega \cap B_{r_0}(\hat{q}_{Q_1})^c}\frac{ a^{s(p-1)}  }{d_w^s  \big( a+ d_w+ \abs{w-\hat{q}_{Q_1}} \big)^{d+s(p-2)}} \d w, 
	\end{equation*}
	is easily bounded independent of $a$ since $a+d_w+\abs{w-\hat{q}_{Q_1}}\ge r_0$ for $w\in \Omega \cap B_{r_0}(\hat{q}_{Q_1})^c$ and $a\le 4\rho$.

	 Therefore $(1-s)I(Q_1)$ is bounded independent of $Q_1$ by a constant $c_{11}=c_{11}(d, \Omega, p_\star, p^\star, s_\star)>0$. We combine \eqref{eq:first_step_estimate_(VII)},\eqref{eq:estimateVII_help3} and \eqref{eq:estimateVII_help4} as well as \eqref{eq:bounded_overlap} to obtain
	\begin{equation}\label{eq:prefinal_estimate_VII}
	(1-s)^2 \mspace{-35mu}\sum\limits_{Q_1, Q_2\in \cW_\rho(\R^d\setminus\overline{\Omega})}\mspace{-20mu}	2^p (\text{VII})\le c_9\,c_{11}\,N\, (1-s)\int\limits_{\Omega_\rho^{\text{ext}}} \int\limits_{\Omega}  \frac{\abs{u(x)-u(z)}^{p}}{\abs{x-z}^{d+sp}} \d z \d x= c_9\,c_{11}\,N  [u]_{\vs{s}{p}(\Omega\,|\, \Omega_\rho^{\text{ext}})}^p.
	\end{equation}
	Finally, we combine \eqref{eq:estimate_V}, \eqref{eq:estimate_one_term_V}, \eqref{eq:estimate_for_VI_complete} and \eqref{eq:prefinal_estimate_VII} and the claim follows. The constant is $c_6:= N \max\{ Nc_8, c_9\,c_{11} \}$.
	
	We finish the estimate of $(\text{IV})$ using the previous claim and \autoref{th:trace_classical_approximation_inequality_1}:
	\begin{align*}
	(\text{V})\le c_6\Bigg( \frac{1}{s^2(p_\star-1)}[u]_{\vs{s}{p}(\Omega\,|\, \Omega_{\rho}^{\text{ext}})}^p + c_5 \norm{u}_{W^{s,p}(\Omega)}^p  \Bigg).
	\end{align*}
	Combining the estimates of $(\text{III})$ and $(\text{IV})$ yields \eqref{eq:trace_continuity}.
\end{proof}

\begin{remark}\label{example:trace_p-1}
	As mentioned in the introduction, the trace embedding $\vs{s}{1}(\Omega\,|\, \R^d)\to \cT^{s,1}(\Omega^c)$ cannot be continuous. This may be seen as follows. Consider the sequence of functions 
	\begin{equation*}
		u_n(x):= \begin{cases}
		0&, x\in \Omega\\
		n^{1-s}&, x\in \Omega_{1/n}^{\text{ext}}\\
		0&, x\in \Omega_{\text{ext}}^{1/n}
		\end{cases}
	\end{equation*}
	for $n\in \N$. By \autoref{lem:distance_upper} and \autoref{lem:distance_lower} one easily sees that $\norm{u_n}_{\vs{s}{1}(\Omega\,|\, \R^d)}\asymp \norm{u_n}_{L^1(\Omega^c;\,\mu_s)}\asymp 1$ but a simple calculation yields $[u_n]_{\cT^{s,1}(\Omega^c)}\asymp \ln(n)\to \infty$ as $n\to \infty$. A similar sequence of functions proves \autoref{th:trace_classical_approximation_inequality_1} to be false for $p=1$.
\end{remark}

\section{Extension results}\label{sec:extension}
	The aim of this section is to prove the extension part results of \autoref{th:trace-extension} and \autoref{th:trace-p-1}. This proof is carried out in \autoref{prop:extension_continuity_Lp_part} and \autoref{prop:extension_continuity_seminorm}. We are able to treat the cases $1 < p < \infty$ and $p=1$ together.

The method used in this section is essentially inspired by \cite[Chapter V]{JoWa84}. We decompose the domain $\Omega$ into Whitney cubes and consider neighborhoods of each cube intersected with $\Omega^c$. The extension is constructed by copying weighted mean values of the exterior data $g$ from this intersection into the respective cube, see \eqref{eq:extension_def}. The weights are taken with respect to a measure that behaves like $\mu_s$ close to the boundary $\partial \Omega$.

\medskip

 Throughout this section we fix an open nonempty proper subset $\Omega\subset \R^d$. We will introduce additional assumptions on $\Omega$ when needed. Further, we fix a dyadic Whitney-decomposition $\cW(\Omega)$ of $\Omega$ consisting of cubes with parallel sides to the axes of $\R^d$ such that
\begin{enumerate}
	\item[(a)]{ $\Omega=\bigcup_{Q\in \cW(\Omega)  } Q$. }
	\item[(b)]{ The interior of the cubes are mutually disjoint.}
	\item[(c)]{ For all cubes $Q\in \cW(\Omega)$ the diameter is comparable to the distance to the boundary of $\Omega$, \ie 
	\begin{equation}\label{eq:distance_diam_comparable}
		\diam{Q}\le d(Q, \partial \Omega)\le 4 \diam{Q}.
\end{equation} }
\end{enumerate}
We set $s_Q\in 2^{\Z}$ to be the side length of a cube $Q$ and let $q_Q\in Q$ be the center of the cube $Q$. We denote the diameter of the cube $Q$ by $l_Q=\diam(Q)=\sqrt{d}s_Q$. This decomposition satisfies the following. Suppose $Q_1,Q_2\in \cW(\Omega)$ touch each other. Then 
\begin{equation}\label{eq:touching_cubes}
	\frac{1}{4}\diam{Q_1}\le \diam{Q_2}\le 4 \diam{Q_1}.
\end{equation}
Additionally, we denote by $Q^\star$ a cube having the same center as $Q\in \cW(\Omega)$ but the side length $(1+1/8)s_Q$. We denote the collection of these scaled cubes $\cW^\star(\Omega):= \{Q^\star\,|\, Q\in \cW(\Omega) \}$. These scaled cubes satisfy a finite overlap property, \ie $\sum_{Q^\star \in \cW^\star(\Omega)}\1_{Q^\star}\le N$ where $N\in \N$ is a fixed number for the remainder of this section. Additionally, two cubes $Q_1^\star, Q_2^\star \in \cW^\star(\Omega)$ have nonempty intersection if and only if $Q_1$ and $Q_2$ touch. We define $J_i\subset \cW(\Omega)$ to be the set of all cubes with side lengths $2^{-i}$ and set
\begin{equation}\label{eq:def_D_j}
	D_i:= \bigcup_{Q\in J_i} Q, \qquad D_{\ge i}:= \bigcup_{j\ge i} D_j.
\end{equation}

Analogous to \cite[Section 1.2]{JoWa84}, we introduce a specific partition of unity on $\Omega$ which we will use to construct an extension operator $\cT^{s,p}(\Omega^c)\to \VspOm$. We emphasize that the construction of the extension is independent of $p$. Let $\psi\in C_c^\infty(\R^d)$ be a bump function such that $\psi = 1 $ on $[-1/2,1/2]^{d}$ and $\psi=0$ on $\big([-(1+1/8)/2,(1+1/8)/2]^d\big)^c$, $0\le \psi \le 1$. Then we define for any $Q\in \cW(\Omega)$ a translated and rescaled version of $\psi$ by 
\begin{equation*}
	\psi_Q(x):= \psi\big( \frac{x-q_{Q}}{s_Q} \big)
\end{equation*} 
and our partition functions 
\begin{equation}\label{eq:def_phi_Q}
	\phi_Q(x):= \frac{\psi_Q(x)}{\sum\limits_{\widetilde{Q}\in \cW(\Omega)} \psi_{\widetilde{Q}(x)} }.
\end{equation}
Then obviously $\sum_Q \phi_Q = \1_{\Omega}$ holds. Furthermore, we set 
\begin{align}\label{eq:rho_kappa_whitney_cubes}
	\rho:= \inr{\Omega}/2\wedge 1/2>0,\quad\kappa:= \lfloor\log_2(\rho/\sqrt{d})\rfloor,\nonumber \\
	 \cW_{\le \kappa}(\Omega):= \{ Q\in \cW(\Omega)\,|\, s_Q\le 2^{\kappa} \}.
\end{align}
 Since $l_Q=\sqrt{d}s_Q$ for any $Q\in \cW$ and the cubes have dyadic side lengths, $l_Q\le \rho$ for any $Q\in \cW_{\le \kappa}$. For any $x\in Q \in \cW$ we know $d_x\le l_Q + \distw{Q}{\partial \Omega}\le 5l_Q$ and $d_x\ge \distw{Q}{\partial \Omega}\ge 1/4 l_Q$. Therefore, 
\begin{equation}\label{eq:small_cubes_united}
\{ x\in \Omega \,|\, d_x<\rho/4\} \subset \bigcup_{Q\in \cW_{\le \kappa}(\Omega)  }Q\subset \{ x\in \Omega \,|\, d_x< 5 \rho \} \,.
\end{equation}
For any cube $Q\in \cW_{\le \kappa}$ such that all neighboring cubes are in $\cW_{\le \kappa}$, we have $\sum_{ Q'\in \cW_{\le \kappa}} \phi_{Q'}(x)=1$,  $x\in Q$. By \eqref{eq:touching_cubes}, all cubes $Q$ with a side length that is at most $2^{\kappa-2}$ only have neighboring cubes in $\cW_{\le \kappa}$. Therefore, 
\begin{equation}\label{eq:extension_unity_close_to_boundary}
	\sum_{ Q\in \cW_{\le \kappa}} \phi_{Q}(x)=1\qquad \big( x\in D_{\ge -\kappa+2}\big).
\end{equation}

We define for $s\in (0,1)$ the measure on $\mathcal{B}(\R^d)$
\begin{equation}\label{eq:def_mu_tilde}
	\widetilde{\mu}_s(\d z)= \1_{\Omega^c}(z)\frac{1-s}{d_z^s}\d z.
\end{equation}
This measure behaves like $\mu_s$, see \eqref{eq:def_mu_s}, near the boundary $\partial \Omega$. We will use $\widetilde{\mu}_s$ to construct the extension of a function $g:\Omega^c\to \R$, see \eqref{eq:extension_def}. In particular, the value of the extension $\extns(g)$ in a cube $Q\in \cW_{\le \kappa}$ will depend on a $\widetilde{\mu}_s$-mean of $g$ in a neighborhood of $Q$. Since $\widetilde{\mu}_s$ converges weakly to the surface measure on $\partial \Omega$, we recover a classical Whitney extension of functions in $W^{1-1/p,p}(\partial \Omega)$ in the limit $s\to 1^-$. We set for $Q\in \cW(\Omega)$
\begin{equation}\label{eq:a_{Q,s}}
	a_{Q,s}:= \Big( \widetilde{\mu}_s(B_{6l_Q}(q_Q)) \Big)^{-1}.
\end{equation}
Since $\distw{q_Q}{\partial \Omega}\le 5l_Q$, the intersection $B_{6l_Q}(q_Q)\cap \Omega^c $ has nonempty interior. The following lemma shows the order of $a_{Q,s}$ in terms of $l_Q$ and $s$ for Lipschitz domains. The estimate \eqref{eq:estimate_a_{Q,s}} is essential in \autoref{prop:extension_continuity_Lp_part} and \autoref{prop:extension_continuity_seminorm}.  
\begin{lemma}\label{lem:estimate_a_{Q,s}}
	Let $\emptyset \ne \Omega\subset \R^d$ be a Lipschitz domain. 
	\begin{align}\label{eq:estimate_a_{Q,s}}
	\begin{aligned}
	&\text{There exists a constant $C=C(d,\Omega)>1$ such that for any $s\in (0,1)$ and $Q\in \cW_{\le \kappa+6}(\Omega)$ } \\
		&\hspace{150pt} C^{-1}\,l_Q^{s-d}\le a_{Q,s}\le C\, l_Q^{s-d}.
		\end{aligned}
	\end{align}
\end{lemma}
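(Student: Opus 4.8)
The plan is to estimate $\widetilde\mu_s(B_{6l_Q}(q_Q))$ from above and below by a constant times $l_Q^{d-s}$, uniformly in $s \in (0,1)$ and $Q \in \cW_{\le \kappa+6}(\Omega)$; then \eqref{eq:estimate_a_{Q,s}} follows by taking reciprocals. Write $R := 6l_Q$ and $q := q_Q$, and recall $d_q = \distw{q}{\partial\Omega} \le 5l_Q < R$, so that $B_R(q)$ genuinely meets $\Omega^c$. The upper bound is essentially \autoref{lem:estimates_balls_tau_s} with the roles of $\Omega$ and $\Omega^c$ swapped: since $\Omega$ is a Lipschitz domain, so is $\Omega^c$ locally near $\partial\Omega$ (it satisfies a uniform Lipschitz boundary condition with the same data), and the cube size $l_Q \le 2^{\kappa+6}\sqrt d \lesssim \rho \le r_0/2$ is controlled by the localization radius; hence the same computation as in \eqref{eq:distance_integral_estimate} — flatten the boundary by the Lipschitz chart, use $\abs{z_d - \phi(z')} \le 2^{1/2}(1+L)\,d_z$ as in \eqref{eq:distance_estimate}, and integrate the one-dimensional weight $\int_0^{cR}(1-s)y_d^{-s}\,\d y_d \le (cR)^{1-s} \le c' R$ (here we lose no $s$-dependence because $R \le 1$, so $R^{1-s} \le 1$ and also $R^{1-s}\ge R$) — gives $\widetilde\mu_s(B_R(q)) \le C R^{d-s} = C'\,l_Q^{d-s}$.

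For the lower bound, the point is that $B_R(q) \cap \Omega^c$ contains a (truncated) cone of fixed opening angle and height comparable to $l_Q$, by the uniform exterior cone condition enjoyed by a Lipschitz domain (see the discussion in \autoref{sec:prelims}). Concretely, take a boundary point $x_0 \in \partial\Omega$ with $\abs{x_0 - q} = d_q \le 5 l_Q$; there is an exterior cone $\cC_{x_0} \subset \overline\Omega^c$ with apex $x_0$, fixed opening angle $\alpha$ and height $h_0$. Truncate it to $\widetilde{\cC} := \{z \in \cC_{x_0} \,:\, \abs{z - x_0} < l_Q \wedge h_0\}$; then $\widetilde{\cC} \subset B_R(q)$ since any such $z$ satisfies $\abs{z - q} \le \abs{z - x_0} + \abs{x_0 - q} \le l_Q + 5l_Q < 6l_Q = R$, and $\abs{\widetilde{\cC}} \gtrsim (l_Q \wedge h_0)^d \gtrsim l_Q^d$ because $l_Q \lesssim \rho \le 1$ is bounded (we may shrink $h_0$ to a fixed multiple of $\rho$ if needed, as in the proof of \autoref{lem:distance_lower}). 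On $\widetilde{\cC}$ we crudely bound $d_z \le \abs{z - x_0} < l_Q$, so $d_z^{-s} \ge l_Q^{-s}$, giving
\begin{equation*}
	\widetilde\mu_s(B_R(q)) \ge (1-s)\int\limits_{\widetilde{\cC}} d_z^{-s}\,\d z \ge (1-s)\, l_Q^{-s}\,\abs{\widetilde{\cC}} \gtrsim (1-s)\, l_Q^{d-s}.
\end{equation*}
The spurious factor $1-s$ must be removed. To do this one instead splits $\widetilde{\cC}$ into dyadic shells $\{2^{-k-1}(l_Q\wedge h_0) \le \abs{z - x_0} < 2^{-k}(l_Q\wedge h_0)\}$, $k \ge 0$; on the $k$-th shell $d_z \lesssim 2^{-k}l_Q$ and the shell has volume $\gtrsim (2^{-k}l_Q)^d$, so its contribution is $\gtrsim (1-s)(2^{-k}l_Q)^{-s}(2^{-k}l_Q)^d = (1-s)\,2^{-k(d-s)}\,l_Q^{d-s}$, and summing the geometric series $\sum_k (1-s)2^{-k(d-s)} = (1-s)(1 - 2^{-(d-s)})^{-1} \asymp 1$ uniformly in $s \in (0,1)$ (for $d \ge 1$, since $1 - 2^{-(d-s)} \ge 1 - 2^{-s} \asymp (1-s)$... ). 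Here the matching is exactly the content of the weak convergence $\mu_s \rightharpoonup \cH^{d-1}$ recorded in \autoref{lem:weak_convergence_measures}: the constants in that convergence are what make $a_{Q,s} \asymp l_Q^{s-d}$ robust.

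The main obstacle is precisely this uniformity in $s$ in the lower bound: a naive estimate produces the factor $(1-s)$ and degenerates as $s \to 1^-$, which would be fatal for the robust extension estimates in \autoref{prop:extension_continuity_Lp_part} and \autoref{prop:extension_continuity_seminorm}. The cure, as sketched, is to organize the cone into dyadic annuli centered at the boundary point so that the weight $(1-s)d_z^{-s}$ is tested at \emph{all} scales $2^{-k}l_Q$ down to $0$; the resulting geometric sum $(1-s)\sum_{k\ge 0} 2^{-k(d-s)}$ is bounded below by a constant independent of $s$ because $1 - 2^{s-d} \gtrsim 1$ uniformly. A final routine point: for $d = 1$ the cone is just an interval of length $\asymp l_Q$ inside $\Omega^c$ abutting $x_0$, and the same dyadic decomposition applies verbatim. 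Assembling the two bounds and taking reciprocals yields \eqref{eq:estimate_a_{Q,s}} with $C = C(d,\Omega)$.
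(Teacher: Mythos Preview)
Your upper bound on $\widetilde\mu_s(B_{6l_Q}(q_Q))$ is fine and coincides with the paper's argument via the Lipschitz chart (the intermediate inequality ``$(cR)^{1-s}\le c'R$'' is not what you want, but the stated conclusion $\widetilde\mu_s\lesssim R^{d-s}$ is correct). The genuine gap is in the lower bound.

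The exterior-cone argument, even with your dyadic refinement, cannot remove the factor $1-s$ when $d\ge 2$. On the $k$-th shell you use $d_z\le |z-x_0|\asymp 2^{-k}l_Q$, so its contribution is $\gtrsim (1-s)(2^{-k}l_Q)^{d-s}$, and
\[
\sum_{k\ge 0}(1-s)\,2^{-k(d-s)}=\frac{1-s}{1-2^{-(d-s)}}.
\]
For $d\ge 2$ the denominator satisfies $1-2^{-(d-s)}\ge 1-2^{-1}=\tfrac12$, so the sum is at most $2(1-s)\to 0$ as $s\to 1^-$. Your assertion $1-2^{-s}\asymp 1-s$ is false (in fact $1-2^{-s}\asymp s$), and in any case a lower bound on the denominator only yields an \emph{upper} bound on the quotient. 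The geometric reason the cone fails: it meets $\partial\Omega$ only at its apex, so the points with small $d_z$ occupy a region of volume $\sim r^d$, and $(1-s)\int_{\text{cone}}|z-x_0|^{-s}\,\d z\asymp (1-s)\,l_Q^{d-s}/(d-s)$, which degenerates. (Your argument does work for $d=1$, where the cone is an interval and $(1-s)/(1-2^{-(1-s)})\to 1/\ln 2$.)

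What is needed is a full $(d{-}1)$-dimensional piece of $\partial\Omega$ inside the ball, and this is exactly what the paper exploits. Pick the nearest boundary point $z_Q$, so that $B_{l_Q}(z_Q)\subset B_{6l_Q}(q_Q)$, and work in the Lipschitz chart at $z_Q$: then $B_{l_Q\wedge r}(z_Q)\cap\Omega^c$ contains a graph region of the form $\{|z'|<c\,l_Q,\ 0<|z_d-\phi(z')|<c\,l_Q\}$, and since $(z',\phi(z'))\in\partial\Omega$ one has $d_z\le |z_d-\phi(z')|$. Hence
\[
\widetilde\mu_s(B_{6l_Q}(q_Q))\;\ge\;\int_{|z'|<c\,l_Q}\int_0^{c\,l_Q}\frac{1-s}{t^{s}}\,\d t\,\d z'\;\asymp\; l_Q^{d-1}\cdot (c\,l_Q)^{1-s}\;\asymp\; l_Q^{d-s},
\]
uniformly in $s\in(0,1)$, because the one-dimensional integral $(1-s)\int_0^a t^{-s}\,\d t=a^{1-s}$ already absorbs the prefactor. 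This is the route the paper takes for both bounds.
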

\begin{proof}
	Let $z_Q\in \partial \Omega$ be a minimizer of the distance of $q_Q$ to the boundary $\partial \Omega$. Since the distance $\distw{q_Q}{\partial \Omega}$ is bounded from above by $5 l_Q$, we have $B_{l_Q}(z_Q)\subset B_{6l_Q}(q_Q)\subset B_{11 l_Q}(z_Q)$. Since $\Omega$ has a uniform Lipschitz boundary, we find a radius $r=r(\Omega)>0$ and a constant $L=L(\Omega)>0$, both independent of $z_Q$, a rotation and translation $T_{z_Q}:\R^d\to \R^d$ as well as a Lipschitz continuous function $\phi:\R^{d-1}\to \R$ such that $T_{z_Q}(\Omega \cap B_r(z_q))= \{ (x',x_d)\in B_r(z_Q)\,|\, x_d> \phi(x') \}$ and $[\phi]_{C^{0,1}}\le L$. Without loss of generality we assume $T_{z_Q}$ to be the identity map, in particular $z_Q=0$. By arguments similar to \eqref{eq:distance_estimate}, we find $2(1+L)d_x\ge \abs{x_d-\phi(x')}$ for any $(x',x_d)\in B_r(0)$ such that $x_d> \phi(x')$. First we assume that $11l_Q\le r$. Then proceeding like in \eqref{eq:distance_integral_estimate} yields
	\begin{align*}
		a_{Q,s}^{-1}\le (2(1+L))^s \int\limits_{B_{11l_Q}\cap \{x_d\ge \phi(x')\}}\frac{1-s}{(x_d-\phi(x'))^s}\d (x',x_d)\le 2(11+L)(11)^d l_Q^{d-s}.
	\end{align*}
	If $11l_Q>r$, then we simply cover $B_{11l_Q}(z_Q)\cap \partial\Omega$ by  finitely many balls $B_1, \dots, B_N$. Since $l_Q$ is bounded from above by $2^6 \rho$, the number of balls  of radius r, which are needed, can be picked uniformly. Set $A:= \Omega^c \cap B_{11lQ}(z_Q) \cap \bigcap_{j} B_j^c$ and $r_1:= \distw{\partial \Omega}{A}$. We pick the balls $B_1, \dots, B_N$ such that $r_1>r/2$. Then, by a similar calculation as above,
	\begin{align*}
		a_{Q,s}^{-1}&\le \sum_{j=1}^N \widetilde{\mu}_s(B_j) + \widetilde{\mu}_s(A)\le N  2(11+L)(11)^d r^{d-s} + 2^7 11^d \omega_{d-1} (r \wedge 1)^{-1} (\rho \vee 1) l_Q^{d-s} \\
		&\le \big(N  2(11+L)(11)^{2d}  + 2^7 11^d \omega_{d-1} (r \wedge 1)^{-1} (\rho \vee 1) \big) l_Q^{d-s}.
	\end{align*}
	For the upper bound in \eqref{eq:estimate_a_{Q,s}}, we simply notice 
	\begin{align*}
		a_{Q,s}^{-1}\ge \int\limits_{B_{l_Q\wedge r}\cap \{x_d\ge \phi(x')\}}\frac{1-s}{(x_d-\phi(x'))^s}\d (x',x_d)
	\end{align*}
	and proceed in a similar fashion.
	\end{proof}
For $g\in L_{\text{loc}}^p(\R^d)$ we define the extension $\extns(g)$ as follows
	\begin{equation}\label{eq:extension_def}
		\extns(g)(x):= \begin{cases}
			\sum\limits_{Q\in \cW_{\le \kappa}(\Omega)} \phi_Q(x) a_{Q,s} \int\limits_{\Omega^c\cap B_{6l_Q}(q_Q)} g(z) \widetilde{\mu}_s(\d z) & \text{ for } x\in \Omega,\\
			g(x)& \text{ for } x\in \Omega^c.	
		\end{cases}
	\end{equation} 
For any $Q\in \cW_{\le \kappa}(\Omega)$ we have $\sup\{ \distw{x}{\partial \Omega}\,|\, x\in Q^\star \}\le  \distw{Q^\star}{\partial \Omega}+ \diam(Q^\star)\le 4\rho + 9/8\rho \le 6\, \rho $. Therefore, $\extns(g)(x)=0$ for $x\in \Omega$ such that $d_x>6\, \rho$. Additionally, the definition of $\extns(g)$ inside $\Omega$ depends only on the values of $g$ on $\Omega_{6\rho}^{\text{ext}}\subset \Omega_{3\,\inr{\Omega}}^{\text{ext}}$. We could use the measure $\mu_s$ introduced in \eqref{eq:def_mu_s} instead of $\widetilde{\mu}_s$ in the definition of the extension because $\extns(g)|_{\Omega}$ does not depend on the values of $g$ far away from the boundary. But the benefit of $\widetilde{\mu}_s$ is that the extension is independent of the parameter $p$.  

We begin by proving some properties of $\extns$ analogous to \cite[Lemma 1]{Jon94}. The proof follows the same lines as \cite[Chapter V Lemma D]{JoWa84}. We define for cubes $Q_1,Q_2\in \cW_{\le \kappa}(\Omega)$ and $g\in L_{\text{loc}}^p(\R^d)$
	\begin{equation}\label{eq:definition_J_p}
		J_p(q_{Q_1}, q_{Q_2}):= \Bigg( a_{Q_1,s}\,a_{Q_2,s}\int\limits_{ B_{30l_{Q_1}}(q_{Q_1})  }\int\limits_{ B_{30l_{Q_2}}(q_{Q_2}) } \abs{g(z_1)-g(z_2)}^p \widetilde{\mu}_s(\d z_2)\widetilde{\mu}_s(\d z_1) \Bigg)^{1/p}.
	\end{equation}
	\begin{lemma}\label{lem:basic_properties_extns}
		Let $s\in (0,1)$, $1\le p<\infty$ and assume that $\Omega$ satisfies \eqref{eq:estimate_a_{Q,s}}. Further, let $Q_1, Q_2\in \cW_{\le \kappa-2}(\Omega)$. There exists a constant $C=C(d,\Omega, \psi, \cW(\Omega))>0$ such that for any $g\in L^p(\Omega^c)$ and $x\in Q_1$, $y\in Q_2$ as well as $b\in \R$
		\begin{enumerate}
			\item[(a)]{ $\abs{\extns(g)(x)-\extns(g)(y)} \le C\, J_p(q_{Q_1}, q_{Q_2})$,}
			\item[(b)]{ $\abs{\nabla \extns(g)(x)} \le C\, l_{Q_1}^{-1} \, J_p(q_{Q_1}, q_{Q_2})$,}
			\item[(c)]{ $\abs{\extns(g)(x)-b} \le C\,\Bigg( a_{Q_1,s}\int\limits_{ B_{30l_{Q_1}}(q_{Q_1})  } \abs{g(z_1)-b}^p \widetilde{\mu}_s(\d z_1) \Bigg)^{1/p}$, }
			\item[(d)]{ $\abs{\nabla\extns(g)(w)} \le C\,l_Q^{-1}\Bigg( a_{Q,s}\int\limits_{ B_{30l_Q}(q_{Q})  } \abs{g(z)}^p \widetilde{\mu}_s(\d z) \Bigg)^{1/p}$ for any $w\in Q\in \cW(\Omega)$. }
		\end{enumerate}
	\end{lemma}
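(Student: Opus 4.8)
The four estimates are standard consequences of the structure of the Whitney extension, and the key quantitative input is \autoref{lem:estimate_a_{Q,s}}, which says $a_{Q,s}\asymp l_Q^{s-d}$ for the cubes under consideration. The plan is to exploit two geometric facts: first, for $x\in Q_1\in\cW_{\le\kappa-2}(\Omega)$ the value $\extns(g)(x)$ is a convex combination (by \eqref{eq:extension_unity_close_to_boundary}, the $\phi_{Q'}$ sum to $1$ on $D_{\ge-\kappa+2}$) of the weighted averages $a_{Q',s}\int_{\Omega^c\cap B_{6l_{Q'}}(q_{Q'})}g\,\d\widetilde\mu_s$ over cubes $Q'$ touching $Q_1$; second, all such neighbouring cubes $Q'$ have side length comparable to $l_{Q_1}$ by \eqref{eq:touching_cubes}, so that $B_{6l_{Q'}}(q_{Q'})\subset B_{30l_{Q_1}}(q_{Q_1})$ and $a_{Q',s}\asymp a_{Q_1,s}$. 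Only finitely many ($\le N$, by the finite-overlap property of $\cW^\star(\Omega)$) cubes $Q'$ are involved at each point.

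\textbf{Part (c).} I would start here since (a) and (b) reduce to it. Writing $\extns(g)(x)-b=\sum_{Q'}\phi_{Q'}(x)\,a_{Q',s}\int_{\Omega^c\cap B_{6l_{Q'}}(q_{Q'})}(g(z_1)-b)\,\widetilde\mu_s(\d z_1)$, one applies Jensen's inequality in the probability measure $a_{Q',s}\widetilde\mu_s\!\restriction_{\Omega^c\cap B_{6l_{Q'}}(q_{Q'})}$ to pull the $p$-th power inside, then bounds $\phi_{Q'}(x)\le 1$, uses $B_{6l_{Q'}}(q_{Q'})\subset B_{30l_{Q_1}}(q_{Q_1})$ together with $a_{Q',s}\le C\,l_{Q'}^{s-d}\le C'\,a_{Q_1,s}$, and sums the at most $N$ nonzero terms. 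Part (a) then follows by the triangle inequality after inserting an arbitrary constant $b$: $\abs{\extns(g)(x)-\extns(g)(y)}\le\abs{\extns(g)(x)-b}+\abs{\extns(g)(y)-b}$, choosing $b=g(z_2)$ and integrating the $p$-th power in $z_2$ against $a_{Q_2,s}\widetilde\mu_s$ over $B_{30l_{Q_2}}(q_{Q_2})$ — one uses here that $B_{30l_{Q_2}}(q_{Q_2})\supset B_{6l_{Q_2}}(q_{Q_2})$ so the averaging makes sense, and then enlarges to $J_p(q_{Q_1},q_{Q_2})$.

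\textbf{Parts (b) and (d).} For the gradient estimates one differentiates \eqref{eq:extension_def}: since $\nabla\big(\sum_{Q'}\phi_{Q'}\big)=0$ on $D_{\ge-\kappa+2}$ (it is identically $1$ there), we may subtract any constant before differentiating, giving $\nabla\extns(g)(x)=\sum_{Q'}\nabla\phi_{Q'}(x)\,a_{Q',s}\int_{\Omega^c\cap B_{6l_{Q'}}(q_{Q'})}(g(z_1)-b)\,\widetilde\mu_s(\d z_1)$. The scaling $\psi_{Q'}(x)=\psi((x-q_{Q'})/s_{Q'})$ gives $\abs{\nabla\phi_{Q'}}\le C\,l_{Q'}^{-1}\le C'\,l_{Q_1}^{-1}$ (using boundedness of $\psi$, of $\nabla\psi$, and the lower bound on the denominator in \eqref{eq:def_phi_Q}, which is $\ge1$ where relevant), and then one repeats the Jensen argument of (c) to arrive at (b); taking $b=g(z_2)$ and integrating against $a_{Q_2,s}\widetilde\mu_s$ as before produces the factor $J_p$. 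Part (d) is the special case $b=0$ with $Q_1=Q=Q_2$, valid for all $Q\in\cW(\Omega)$ (not only small cubes) because the bound on $\abs{\nabla\phi_{Q'}}$ and the inclusion $B_{6l_{Q'}}(q_{Q'})\subset B_{30l_Q}(q_Q)$ hold for every cube, while $a_{Q',s}\asymp a_{Q,s}$ still follows from \autoref{lem:estimate_a_{Q,s}} as long as the cubes remain in the regime where that lemma applies (cubes of side length $\le2^{\kappa+6}$, which is where the nonzero terms live). The only mild obstacle is bookkeeping: one must check that whenever $x\in Q_1\in\cW_{\le\kappa-2}$, every cube $Q'$ with $\phi_{Q'}(x)\ne0$ lies in $\cW_{\le\kappa+6}$ so that \autoref{lem:estimate_a_{Q,s}} is applicable and the comparison $a_{Q',s}\asymp a_{Q_1,s}$ is legitimate — this is where the choice of the threshold $\kappa-2$ versus $\kappa+6$ is used, and it follows directly from \eqref{eq:touching_cubes}.
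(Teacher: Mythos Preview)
Your plan is correct and uses the same ingredients as the paper (the partition of unity summing to $1$ on $D_{\ge -\kappa+2}$, touching-cube comparability \eqref{eq:touching_cubes}, the inclusion $B_{6l_{Q'}}(q_{Q'})\subset B_{30l_{Q_1}}(q_{Q_1})$, Jensen/H\"older, the finite overlap, and \autoref{lem:estimate_a_{Q,s}}). The organization, however, differs from the paper's: you prove (c) first and then reduce (a) and (b) to it, whereas the paper obtains (a) and (b) directly from the \emph{double} identity
\[
\extns(g)(x)-\extns(g)(y)=\sum_{Q,\widetilde Q\in\cW_{\le\kappa}}\phi_Q(x)\phi_{\widetilde Q}(y)\,a_{Q,s}a_{\widetilde Q,s}\iint\big(g(z_1)-g(z_2)\big)\,\widetilde\mu_s(\d z_2)\widetilde\mu_s(\d z_1),
\]
obtained by expanding both $\extns(g)(x)$ and $\extns(g)(y)$ and using $\sum\phi=1$ at each point; the analogue for (b) replaces $\phi_Q(x)$ by $\nabla\phi_Q(x)$ and subtracts the constant $\extns(g)(y)$ (legitimate since $\sum\nabla\phi_{Q'}=0$). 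One then applies H\"older once and reaches $J_p(q_{Q_1},q_{Q_2})$ in a single stroke.

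Your reduction of (a) to (c) has a small lacuna: after setting $b=g(z_2)$ and integrating in $z_2$ against $a_{Q_2,s}\widetilde\mu_s$ over $B_{30l_{Q_2}}(q_{Q_2})$, the term coming from $\abs{\extns(g)(y)-g(z_2)}^p$ produces (via (c) applied at $y\in Q_2$) the quantity $J_p(q_{Q_2},q_{Q_2})^p$, not $J_p(q_{Q_1},q_{Q_2})^p$. Your phrase ``then enlarges to $J_p(q_{Q_1},q_{Q_2})$'' conceals one further triangle-and-average step: for $z_1,z_2\in B_{30l_{Q_2}}(q_{Q_2})$ write $\abs{g(z_1)-g(z_2)}\le\abs{g(z_1)-g(w)}+\abs{g(w)-g(z_2)}$, raise to the $p$-th power and integrate in $w$ against $a_{Q_1,s}\widetilde\mu_s$ over $B_{30l_{Q_1}}(q_{Q_1})$, using that $a_{Q_1,s}\widetilde\mu_s(B_{30l_{Q_1}}(q_{Q_1}))\asymp 1$ by \autoref{lem:estimate_a_{Q,s}}. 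This is routine, but the paper's double-sum identity avoids it altogether, which is why it is a slightly cleaner route for (a) and (b). Your treatment of (c) and (d) matches the paper's.
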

	\begin{proof}
		For \textbf{(a):} By \eqref{eq:extension_unity_close_to_boundary}, $\sum\limits_{Q\in \cW_{\le \kappa}} \phi_Q(x) = 1 = \sum\limits_{Q\in \cW_{\le \kappa}} \phi_Q(y) $. By the choice of $a_{Q,s}$, we find 
		\begin{align*}
			&\extns(g)(x)-\extns(g)(y)= \sum\limits_{Q \in \cW_{\le \kappa}(\Omega)} \phi_{Q }(x) a_{Q ,s} \int\limits_{ B_{6l_{Q}}(q_{Q })} (g(z_1)- \extns(g)(y))\widetilde{\mu}_s(\d z_1) \\
			&\qquad= 	\sum\limits_{Q, \widetilde{Q} \in \cW_{\le \kappa}(\Omega)} \phi_{Q }(x)\phi_{\widetilde{Q}}(y)  a_{Q ,s} a_{\widetilde{Q},s} \int\limits_{B_{6l_{Q}}(q_{Q })}  \int\limits_{B_{6l_{\widetilde{Q}}}(q_{\widetilde{Q}})} (g(z_1)-g(z_2))\widetilde{\mu}_s(\d z_2) \widetilde{\mu}_s(\d z_1).
		\end{align*}
		We apply Hölder inequality to find
		\begin{align*}
			&\abs{\extns(g)(x)-\extns(g)(y)}\le 	\sum\limits_{Q, \widetilde{Q} \in \cW_{\le \kappa}(\Omega)} \phi_{Q }(x)\phi_{\widetilde{Q}}(y)  a_{Q ,s} a_{\widetilde{Q},s} \Big( \widetilde{\mu}_s(B_{6l_{Q}}(q_{Q }))\,\widetilde{\mu}_s(B_{6l_{\widetilde{Q}}}(q_{\widetilde{Q}})) \Big)^{1-1/p}\\
			&\qquad\qquad\qquad \cdot \Bigg(\int\limits_{B_{6l_{Q}}(q_{Q })}  \int\limits_{B_{6l_{\widetilde{Q}}}(q_{\widetilde{Q}})} \abs{g(z_1)-g(z_2)}^p\widetilde{\mu}_s(\d z_2) \widetilde{\mu}_s(\d z_1)\Bigg)^{1/p} \\
			&\qquad= \sum\limits_{Q, \widetilde{Q} \in \cW_{\le \kappa}(\Omega)} \phi_{Q }(x)\phi_{\widetilde{Q}}(y) \Bigg(a_{Q,s}a_{\widetilde{Q},s}\int\limits_{B_{6l_{Q}}(q_{Q })}  \int\limits_{B_{6l_{\widetilde{Q}}}(q_{\widetilde{Q}})} \abs{g(z_1)-g(z_2)}^p\widetilde{\mu}_s(\d z_2) \widetilde{\mu}_s(\d z_1)\Bigg)^{1/p}.
		\end{align*}
		Let $Q\in \cW_{\le \kappa}$ be a cube such that $\phi_{Q}(x)\ne 0$, then $Q$ touches $Q_1$ by the definition of $\phi_Q$. By \eqref{eq:touching_cubes} we find $\abs{t-q_{Q_1}}\le \abs{t-q_Q}+ \abs{q_Q-q_{Q_1}}\le 6l_Q + (l_Q+l_{Q_1})\le (6\cdot 4 +4+1)l_{Q_1}\le 30 l_{Q_1}$ for any $t\in B_{6l_Q}(q_Q)$. Let $c_1=c_1(d,\Omega, p)>1$ be the constant from \eqref{eq:estimate_a_{Q,s}}, then $a_{Q,s}\le c_1 l_Q^{s-d}\le c_1 4^{d-s} l_{Q_1}^{s-d}\le c_1^2 4^{d} a_{Q_1,s}$. By the finite overlap property, there exist at most $N-1$ cubes touching $Q_1$. The same holds for $Q_1$ replaced by $Q_2$. Therefore, 

		\begin{align*}
			&\abs{\extns(g)(x) -\extns(g)(y)} \\ 
			& \quad \le (N-1)^2 (c_1^2 4^d )^{2/p} \Bigg(a_{Q_1,s}a_{Q_2,s}\iint\limits_{\substack{ B_{30 l_{Q_1}}(q_{Q_1 })\\B_{30 l_{Q_2}}(q_{Q_2 }) }}  \abs{g(z_1)-g(z_2)}^p\widetilde{\mu}_s(\d z_2) \widetilde{\mu}_s(\d z_1)\Bigg)^{1/p}.
		\end{align*}
		
		For \textbf{(b):}  By \eqref{eq:extension_unity_close_to_boundary}, $\sum_{ Q\in \cW_{\le \kappa}}\phi_{Q }=1$ on $D_{\ge 2-\kappa}$ and, thus, $\sum_{Q \in \cW_{\le \kappa}} \nabla \phi_Q =0$ on $D_{\ge 2-\kappa}$. We write
		\begin{align*}
			\nabla \extns(g)(x)&= \sum\limits_{Q\in \cW_{\le \kappa}(\Omega)} \nabla \phi_Q(x)\Bigg( a_{Q,s} \int\limits_{ B_{6l_Q}(q_Q)} g(z_1) \widetilde{\mu}_s(\d z_1) - \extns(g)(y)\Bigg)\\
			&=  \sum\limits_{Q,\widetilde{Q}\in \cW_{\le \kappa}(\Omega)} \nabla \phi_Q(x)\phi_{\widetilde{Q}}(y) a_{Q,s} a_{\widetilde{Q},s}\int\limits_{B_{6l_Q}(q_Q)}\int\limits_{B_{6l_{\widetilde{Q}}}(q_{\widetilde{Q}})}  (g(z_1) -g(z_2))\widetilde{\mu}_s(\d z_2)\widetilde{\mu}_s(\d z_1)  
		\end{align*}
		By the definition of the partition of unity $\phi_{(\cdot)}$, see \eqref{eq:def_phi_Q}, there exists a positive constant $c_2=c_2(\cW(\Omega), d, \psi)$ such that for any $Q\in \cW_{\le \kappa}$ touching $Q_1$ or $Q_1$ itself, we have $\abs{\nabla \phi_Q(x)}\le c_2\,s_{Q}^{-1}\le 4\sqrt{d} c_2\, l_{Q_1}^{-1}$. We calculate using the same arguments as in the proof of (a)
		\begin{align*}
			\abs{\nabla \extns(g)(x)}\le l_{Q_1}^{-1} 4\sqrt{d} \, c_2\,(N-1)^2 (c_1^2 4^d)^{2/p} J_p(q_{Q_1}, q_{Q_2}). 
		\end{align*}
		The proofs of \textbf{(c)} and \textbf{(d)} follow the same lines as the proofs of (a) and (b). Therefore, we omit them. 
	\end{proof}
	\begin{lemma}[{\cite[Lemma 2]{Jon94}}{\cite[Chapter V Lemma 2]{JoWa84}}]\label{lem:basic_properties_extns_2}
		Let $s\in (0,1)$, $a>0$, $h:\Omega^c\to \R$. Set for $x\in \Omega$
		\begin{equation*}
			f(x)=\int\limits_{ B_{a\,l_Q}(q_Q) } h(t)\widetilde{\mu}_s (\d t), 
		\end{equation*}
		if $x\in \mathring{Q}$ for $Q\in J_i$, $i \in \N$, and $f(x)=0$ otherwise. There exist constants $C=C(d,a)>0$ and $a_0=a_0(d,a)>0$ such that for any $x_0\in \R^d$ and $0<r\le \infty$
		\begin{equation}\label{eq:basic_properties_help}
			\int\limits_{D_i\cap B_{r}(x_0)} f(x)\d x \le C\,2^{-id} \int\limits_{\Omega_{\sqrt{d}a 2^{-i}}^{\text{ext}}\cap B_{r+a_02^{-i}}(x_0) } \, h(t)\widetilde{\mu}_s(\d t).
		\end{equation}
	\end{lemma}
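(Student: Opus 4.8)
The plan is to follow the structure of the cited result \cite[Chapter V Lemma 2]{JoWa84}, exploiting the Whitney decomposition $\cW(\Omega)$ together with the comparability $\diam Q \asymp \distw{Q}{\partial\Omega}$ from \eqref{eq:distance_diam_comparable} and the fact that all cubes in $J_i$ have the same side length $2^{-i}$. First I would fix $i \in \N$ and observe that for $x \in \mathring Q$ with $Q \in J_i$ we have $l_Q = \sqrt{d}\,2^{-i}$, so the ball $B_{a l_Q}(q_Q) = B_{a\sqrt{d}\,2^{-i}}(q_Q)$ has a radius depending only on $i$ (and $a,d$). Integrating $f$ over $Q$ gives $\int_Q f(x)\,\d x = |Q| \int_{B_{a\sqrt d 2^{-i}}(q_Q)} h(t)\,\widetilde\mu_s(\d t) = 2^{-id} \int_{B_{a\sqrt d 2^{-i}}(q_Q)} h\,\d\widetilde\mu_s$. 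Summing over those $Q \in J_i$ that meet $B_r(x_0)$ yields
\begin{equation*}
\int\limits_{D_i \cap B_r(x_0)} f(x)\,\d x \le 2^{-id} \sum\limits_{\substack{Q \in J_i \\ Q \cap B_r(x_0) \neq \emptyset}} \;\int\limits_{B_{a\sqrt d 2^{-i}}(q_Q)} h(t)\,\widetilde\mu_s(\d t).
\end{equation*}

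**Next** I would control the overlap of the balls $\{B_{a\sqrt d 2^{-i}}(q_Q)\}_{Q \in J_i}$: since the cubes in $J_i$ are congruent dyadic cubes with pairwise disjoint interiors and common side length $2^{-i}$, a standard packing argument shows that any point of $\R^d$ lies in at most $C(d,a)$ of these enlarged balls. Hence the sum of integrals is bounded by $C(d,a)$ times a single integral of $h\,\d\widetilde\mu_s$ over the union $\bigcup_Q B_{a\sqrt d 2^{-i}}(q_Q)$, the union being taken over $Q \in J_i$ meeting $B_r(x_0)$. Two localizations remain. For the \emph{spatial} one: if $Q$ meets $B_r(x_0)$ then $q_Q \in B_{r + l_Q}(x_0)$, so $B_{a\sqrt d 2^{-i}}(q_Q) \subset B_{r + (1+a)\sqrt d 2^{-i}}(x_0)$; taking $a_0 := (1+a)\sqrt d$ puts the union inside $B_{r + a_0 2^{-i}}(x_0)$. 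For the \emph{distance-to-boundary} one: since $\widetilde\mu_s$ is supported on $\Omega^c$, only points $t \in \Omega^c$ contribute; for such $t \in B_{a\sqrt d 2^{-i}}(q_Q)$ with $Q \in J_i$, the comparability \eqref{eq:distance_diam_comparable} gives $\distw{q_Q}{\partial\Omega} \le 4 l_Q = 4\sqrt d 2^{-i}$, whence $\distw{t}{\partial\Omega} \le \distw{q_Q}{\partial\Omega} + |t - q_Q| \le (4 + a)\sqrt d\, 2^{-i}$, so $t \in \Omega^{\mathrm{ext}}_{\sqrt d a 2^{-i}}$ after possibly enlarging the constant $a$ in the statement — or, more precisely, this shows the union of the (intersections with $\Omega^c$ of the) balls is contained in $\{t \in \overline\Omega^c : \distw{t}{\partial\Omega} < C(d,a) 2^{-i}\}$, which is the set $\Omega^{\mathrm{ext}}_{\sqrt d a 2^{-i}}$ up to adjusting the implicit constant. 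Combining, the right-hand side is at most $C(d,a)\, 2^{-id} \int_{\Omega^{\mathrm{ext}}_{\sqrt d a 2^{-i}} \cap B_{r + a_0 2^{-i}}(x_0)} h(t)\,\widetilde\mu_s(\d t)$, which is \eqref{eq:basic_properties_help}.

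**The main obstacle** I anticipate is purely bookkeeping: getting the numerical constants in the radii of the enlarged balls to match the precise form "$\Omega^{\mathrm{ext}}_{\sqrt d a 2^{-i}}$" and "$B_{r + a_0 2^{-i}}(x_0)$" stated in the lemma, rather than some larger but harmless multiple. This is resolved by choosing $a_0 = a_0(d,a)$ large enough and absorbing the discrepancy between $\sqrt d a$ and $(4+a)\sqrt d$ into the constant $C$ (the statement only asks for \emph{some} $C(d,a)$ and \emph{some} $a_0(d,a)$, so no sharpness is needed). A secondary minor point is that when $r = \infty$ the ball $B_r(x_0)$ is all of $\R^d$ and the localization is vacuous, so the estimate degenerates to $\int_{D_i} f \le C 2^{-id} \int_{\Omega^{\mathrm{ext}}_{\sqrt d a 2^{-i}}} h\,\d\widetilde\mu_s$, which the same argument delivers. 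Nothing here is deep; the one quantitative input that genuinely matters is the uniform overlap bound for congruent balls of the same radius, which is elementary.
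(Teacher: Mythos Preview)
The paper does not supply its own proof of this lemma; it simply cites \cite[Lemma 2]{Jon94} and \cite[Chapter V Lemma 2]{JoWa84}. Your argument is the standard one from those references and is correct: integrate over each cube, use that all $Q\in J_i$ have volume $2^{-id}$, then control the sum by a bounded-overlap argument for the congruent balls $B_{a\sqrt d\,2^{-i}}(q_Q)$.

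One simplification is available in your distance-to-boundary step. You bound $d_t\le d_{q_Q}+|t-q_Q|\le(4+a)\sqrt d\,2^{-i}$ and then worry about matching the radius $\sqrt d\,a\,2^{-i}$ in the statement. In fact no adjustment is needed: since $q_Q\in\Omega$ and $t\in\Omega^c$, the segment from $t$ to $q_Q$ crosses $\partial\Omega$, so directly
\[
d_t\le|t-q_Q|<a\,l_Q=a\sqrt d\,2^{-i},
\]
which places $t$ exactly in $\Omega^{\mathrm{ext}}_{\sqrt d\,a\,2^{-i}}$ as stated. This removes the bookkeeping issue you flagged as the main obstacle.
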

	
	\begin{lemma}\label{lem:new_norm_trace}
			Assume $a,b>0$ and $p^\star \geq 1$. There exists a  constant $C=C(a,b,d,p_\star)>0$ such that for $s\in (0,1)$ and $1\le p\le p^\star$
			\begin{align*}
			&\sum\limits_{j=0}^\infty \,2^{j(d+s(p-2))}\iint\limits_{\substack{ \abs{x-y}\le a 2^{-j} \\ d_x\le b2^{-j} }} \abs{g(x)-g(y)}^p\widetilde{\mu}_s(\d y)\widetilde{\mu}_s(\d x)\\
			&\qquad\qquad\le \frac{C}{d+s(p-2)}\, \iint\limits_{\substack{ \abs{x-y}\le a  \\ d_x\le b }} \frac{\abs{g(x)-g(y)}^p}{((\abs{x-y}+d_x+d_y)\wedge 1)^{d+s(p-2)}}\mu_s(\d y)\mu_s(\d x)
			\end{align*}
	\end{lemma}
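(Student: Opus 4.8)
The plan is to compare the discrete dyadic sum on the left with the continuous integral on the right, band by band in the variable $|x-y|+d_x$. First I would note that on the region $\{|x-y|\le a2^{-j},\ d_x\le b2^{-j}\}$ we also have, by the triangle inequality $d_y\le d_x+|x-y|\le (a+b)2^{-j}$, so $|x-y|+d_x+d_y\le (2a+2b)2^{-j}$; hence for $j$ large enough (say $2^{-j}\le (2a+2b)^{-1}$, only finitely many small $j$ excluded, which we absorb into the constant using the same argument on a single fixed scale) the truncation at $1$ is inactive and $((|x-y|+d_x+d_y)\wedge 1)^{d+s(p-2)}=(|x-y|+d_x+d_y)^{d+s(p-2)}\le (2a+2b)^{d+s(p-2)}2^{-j(d+s(p-2))}$ provided $d+s(p-2)\ge 0$ — and when $d+s(p-2)<0$ the same bound holds with the reversed monotonicity, using $|x-y|+d_x+d_y\ge d_x$ together with the lower cutoff; in either case one gets $2^{j(d+s(p-2))}\lesssim ((|x-y|+d_x+d_y)\wedge 1)^{-(d+s(p-2))}$ up to a constant depending on $a,b,d$ and on the sign of $d+s(p-2)$, where I use $1\le p\le p^\star$ to bound $|d+s(p-2)|$ and the relevant powers uniformly.

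The main step is then a dyadic decomposition: write the left-hand side as a sum over $j\ge 0$ of integrals over the annular-type sets $A_j:=\{|x-y|\le a2^{-j},\ d_x\le b2^{-j}\}$, and observe that each pair $(x,y)$ with $|x-y|+d_x\le a+b$ belongs to $A_j$ only for $j$ up to roughly $\log_2\frac{a+b}{|x-y|+d_x}$, i.e. the sets $A_j$ are nested and a given pair is counted in at most $\lfloor\log_2\tfrac{\cdots}{\cdots}\rfloor+1$ of them. More efficiently, I would swap the order of summation and integration and, for fixed $(x,y)$, sum the geometric-type series $\sum_{j:\,(x,y)\in A_j}2^{j(d+s(p-2))}$. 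This is a geometric series with ratio $2^{\pm(d+s(p-2))}$; its sum is comparable to the largest term, which corresponds to the smallest admissible scale $2^{-j}\asymp|x-y|+d_x$ (up to the constant $a+b$), giving exactly $\asymp(|x-y|+d_x)^{-(d+s(p-2))}\asymp((|x-y|+d_x+d_y)\wedge 1)^{-(d+s(p-2))}$ on $A_0$. The factor $\frac{1}{d+s(p-2)}$ on the right-hand side is precisely the normalization of that geometric series: summing $\sum_k 2^{-k\gamma}=\frac{1}{1-2^{-\gamma}}\asymp\frac{1}{\gamma}$ for small $\gamma>0$, and here $\gamma=|d+s(p-2)|$ can be as small as we like (when $d=1$, $p$ near $1$), so the honest dependence is $C/(d+s(p-2))$ as stated; I must be careful to track that the implicit constant $C$ does not itself blow up, which is why the hypotheses fix $a,b,d$ and only a lower bound $p_\star$ on... wait, here only $p\le p^\star$ is assumed — in fact the statement only needs $1\le p\le p^\star$, and the factor $\frac{1}{d+s(p-2)}$ already carries the singular behaviour, so no lower bound on $p-1$ is needed.

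Finally I would replace $\widetilde\mu_s$ by $\mu_s$ on the right: on the support of the integrand, $d_x\le b$ and $d_y\le a+b$ are bounded, so $(1+d_x)^{-d-s(p-1)}$ and $(1+d_y)^{-d-s(p-1)}$ are bounded below by a constant $c(a,b,d,p^\star)>0$, whence $\widetilde\mu_s(\d x)\le c^{-1}\mu_s(\d x)$ there; the left-hand side is unchanged since we only enlarge the region and $\widetilde\mu_s\le\mu_s$ is false in general but on $d_x\le b$ it holds up to the same constant, so I would instead keep $\widetilde\mu_s$ throughout the left-hand side and only invoke $\widetilde\mu_s\le c^{-1}\mu_s$ on the bounded-distance region when producing the right-hand side. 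The main obstacle, and the only place demanding care, is the uniformity of all constants as $s\to1^-$ and as $d+s(p-2)\to0$: one must check that the geometric-series constant is genuinely $\asymp 1/(d+s(p-2))$ with no hidden $s$-dependence, and that the handful of small-$j$ terms excluded at the start (where the $\wedge 1$ truncation could be active) are controlled by a single-scale estimate with an $s$-independent constant, using that $(1-s)d_x^{-s}$ integrated over a bounded region is $\lesssim 1$ uniformly in $s$ — this is exactly the kind of bound already established in the proof of Lemma 2.8 and in \eqref{eq:trace_estimate_Omega^1_Omega^c_prev_2}.
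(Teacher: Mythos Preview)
Your proposal is correct and takes essentially the same approach as the paper. Both arguments swap the sum over $j$ with the double integral and, for a fixed pair $(x,y)$, sum the geometric series $\sum_{j}2^{j(d+s(p-2))}$ over the admissible range $0\le j\le j_0$ with $2^{-j_0}\asymp \max(|x-y|/a,\,d_x/b)\asymp |x-y|+d_x+d_y$, which produces both the kernel $((|x-y|+d_x+d_y)\wedge 1)^{-(d+s(p-2))}$ and the factor $\tfrac{1}{d+s(p-2)}$; the paper packages this via an explicit double dyadic decomposition $a2^{-n-1}\le|x-y|\le a2^{-n}$, $b2^{-k-1}\le d_x\le b2^{-k}$ and sums $j\le k\wedge n$, while you do the pointwise geometric sum directly, but the content is identical. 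One small simplification: since $d\ge 1$, $s<1$, $p\ge 1$ force $d+s(p-2)\ge d-s>0$, your case distinction on the sign of $d+s(p-2)$ is unnecessary.
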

	It is only due to this lemma that the norm of the extension operator in \autoref{th:trace-p-1} in the case $d=1$ depends on a lower bound of $(1-s)$.
	\begin{proof}
		The LHS of the inequality is equal to
		\begin{align*}
			&\sum\limits_{j=0}^\infty\sum\limits_{k,n\ge j}  \,2^{j(d+s(p-2))}\iint\limits_{\substack{ a2^{-n-1}\le \abs{x-y}\le a 2^{-n} \\ b2^{-k-1}\le d_x\le b2^{-k} }}\!\!\!\!\! \abs{g(x)-g(y)}^p\widetilde{\mu}_s(\d y)\widetilde{\mu}_s(\d x)\\
			&\quad = \Bigg( \sum\limits_{n\ge k\ge 0} \sum\limits_{j=0}^k + \sum\limits_{k> n\ge 0} \sum\limits_{j=0}^n \Bigg) \,2^{j(d+s(p-2))} \iint\limits_{\substack{a2^{-n-1}\le  \abs{x-y}\le a 2^{-n} \\ b2^{-k-1}\le  d_x\le b2^{-k} }} \!\!\!\!\!\abs{g(x)-g(y)}^p\widetilde{\mu}_s(\d y)\widetilde{\mu}_s(\d x)\\
			&\quad \le 2^{d+p+1} \sum\limits_{n, k\ge 0} \frac{2^{(k\wedge n)(d+s(p-2))}}{d+s(p-2)}(1+a+b)^{2d+2s(p-1)}\hspace{-1cm}\iint\limits_{\substack{ a2^{-n-1}\le \abs{x-y}\le a 2^{-n} \\ b2^{-k-1}\le  d_x\le b2^{-k} }} \!\!\!\!\!\abs{g(x)-g(y)}^p\mu_s(\d y)\mu_s(\d x) \\
			&\quad =:(\text{I}).
		\end{align*}
		For $x,y\in \Omega^c$ satisfying $\abs{x-y}\le a2^{-n}$ and $d_x\le b 2^{-k}$ we have $\abs{x-y}+d_x+d_y\le 2(a+b)2^{-(k\wedge n)}$. Therefore,
		\begin{align*}
			(\text{I})\le  \frac{(2(a+b) +1)^{4d+4p+1}}{d+s(p-2)}\iint\limits_{\substack{ \abs{x-y}\le a  \\ d_x\le b }}  \frac{\abs{g(x)-g(y)}^p}{((\abs{x-y}+d_x+d_y)\wedge 1)^{d+s(p-2)}}\mu_s(\d y)\mu_s(\d x).
		\end{align*}
	\end{proof}

	Now we are in the position to prove the continuity of the $L^p$-part. Recall the definition of the sets $\Omega_r^{\text{ext}}, \Omega^r_{\text{ext}}$ in \eqref{eq:def-Omega-decomp} for given $r>0$.
	
	\begin{proposition}\label{prop:extension_continuity_Lp_part}
		Let $s\in (0,1)$, $1\le p\le p^\star<\infty$ and assume that $\Omega$ satisfies \eqref{eq:estimate_a_{Q,s}}. Then for every measurable $g:\Omega^c\to \R$
		\begin{equation*}
			\norm{\extns(g)}_{L^p(\Omega)}\le \frac{C}{s^{1/p}}\, \norm{g}_{L^{p}(\Omega_{3\,\inr{\Omega}}^{\text{ext}  };\,\mu_s )}
		\end{equation*}
		with a constant $C=C(d,\Omega, p^\star)>0$, which is independent of $s$ and $p$.
	\end{proposition}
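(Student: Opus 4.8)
The plan is to apply Jensen's inequality twice and then to sum the resulting local contributions over the Whitney cubes in $\cW_{\le\kappa}(\Omega)$. For $x\in\Omega$ write $\extns(g)(x)=\sum_{Q\in\cW_{\le\kappa}(\Omega)}\phi_Q(x)\,c_Q$ with $0\le\phi_Q\le1$, $\sum_Q\phi_Q(x)\le1$, where $c_Q:=a_{Q,s}\int_{\Omega^c\cap B_{6l_Q}(q_Q)}g\,\d\widetilde{\mu}_s$ is the $\widetilde{\mu}_s$-average of $g$ over $\Omega^c\cap B_{6l_Q}(q_Q)$, using that $\widetilde{\mu}_s$ is carried by $\Omega^c$ and $a_{Q,s}^{-1}=\widetilde{\mu}_s(B_{6l_Q}(q_Q))$. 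Convexity of $t\mapsto|t|^p$, the bound $\big(\sum_Q\phi_Q(x)\big)^{p-1}\le1$, and Jensen applied to each $c_Q$ give
\[
|\extns(g)(x)|^p\le\sum_{Q\in\cW_{\le\kappa}(\Omega)}\phi_Q(x)\,a_{Q,s}\int_{\Omega^c\cap B_{6l_Q}(q_Q)}|g(z)|^p\,\widetilde{\mu}_s(\d z).
\]
Integrating over $\Omega$ and using $\supp\phi_Q\subset Q^\star$, $0\le\phi_Q\le1$, hence $\int_\Omega\phi_Q\le|Q^\star|\le c_d\,l_Q^{\,d}$, together with $a_{Q,s}\le c\,l_Q^{\,s-d}$, which holds by the standing assumption \eqref{eq:estimate_a_{Q,s}} (note $\cW_{\le\kappa}(\Omega)\subset\cW_{\le\kappa+6}(\Omega)$), one obtains
\[
\int_\Omega|\extns(g)(x)|^p\,\d x\le c\sum_{Q\in\cW_{\le\kappa}(\Omega)}l_Q^{\,s}\int_{\Omega^c\cap B_{6l_Q}(q_Q)}|g(z)|^p\,\widetilde{\mu}_s(\d z).
\]

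Next I would pass from $\widetilde{\mu}_s$ to $\mu_s$. For $z\in\Omega^c\cap B_{6l_Q}(q_Q)$ with $Q\in\cW_{\le\kappa}(\Omega)$ one has $d_z\le|z-q_Q|+d_{q_Q}\le 6l_Q+5l_Q\le 11\rho\le\tfrac{11}{2}$, so $(1+d_z)^{-d-s(p-1)}\ge(\tfrac{13}{2})^{-d-p^\star}$ and therefore $\widetilde{\mu}_s\le C(d,p^\star)\,\mu_s$ on that set. Interchanging sum and integral by Tonelli's theorem then yields
\[
\int_\Omega|\extns(g)(x)|^p\,\d x\le c\int_{\Omega^c}|g(z)|^p\Bigg(\sum_{\substack{Q\in\cW_{\le\kappa}(\Omega)\\ z\in B_{6l_Q}(q_Q)}}l_Q^{\,s}\Bigg)\mu_s(\d z),
\]
and, since $\extns(g)|_\Omega$ depends only on $g|_{\Omega_{3\inr{\Omega}}^{\text{ext}}}$ (see the discussion following \eqref{eq:extension_def}), the right-hand integral may be taken over $\Omega_{3\inr{\Omega}}^{\text{ext}}$.

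It remains to bound, uniformly in $z\in\Omega^c$, the quantity $S(z):=\sum_{Q\in\cW_{\le\kappa}(\Omega),\,z\in B_{6l_Q}(q_Q)}l_Q^{\,s}$; this is the step that produces the factor $s^{-1/p}$ and is the only genuinely quantitative point of the proof. Grouping the cubes by their dyadic side length, at each scale there are at most $M_d=M_d(d)$ cubes $Q$ with $|q_Q-z|<6l_Q$, since such cubes are pairwise disjoint of equal side and lie in a ball about $z$ of comparable radius. Moreover $q_Q\in\Omega$ and $z\in\Omega^c$ force the segment $[q_Q,z]$ to cross $\partial\Omega$, so $6l_Q>|q_Q-z|\ge d_z$, i.e.\ $l_Q>d_z/6$; combined with $l_Q\le\rho$ this confines the contributing scales to a dyadic interval on which $\sum l_Q^{\,s}$ is a geometric series with ratio $2^{-s}$ and largest term $\le\rho^s\le1$. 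Hence $S(z)\le M_d\,d^{s/2}\sum_i2^{-is}\le M_d\sqrt d\,(1-2^{-s})^{-1}\le 2M_d\sqrt d\,s^{-1}$, using $1-2^{-s}\ge s/2$ for $0<s<1$. Substituting this bound and taking $p$-th roots gives the assertion with a constant $C=C(d,\Omega,p^\star)$ independent of $s$ and of $p$.
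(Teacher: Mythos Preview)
Your proof is correct and follows essentially the same approach as the paper's: both use Jensen's inequality to bound the $\widetilde{\mu}_s$-averages, the estimate $a_{Q,s}\lesssim l_Q^{s-d}$ together with $|Q^\star|\asymp l_Q^d$, the comparison $\widetilde{\mu}_s\lesssim\mu_s$ near $\partial\Omega$, and then a counting argument at each dyadic scale combined with the geometric sum $\sum_i 2^{-is}\le 2/s$. The only cosmetic differences are that you obtain the pointwise bound via Jensen on the convex combination $\sum_Q\phi_Q(x)c_Q$ (using $\sum_Q\phi_Q\le 1$) rather than invoking the overlap constant $N^p$, and that you interchange sum and integral before counting overlaps instead of after; neither changes the substance of the argument.
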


	\begin{proof}
		Firstly, $\int_{\Omega} \phi_Q(x)\d x\le \abs{Q^\star}\le (1+1/8)^{d}\,s_Q^{d}$ for any $Q\in \cW(\Omega)$. Let $c_1=c_1(d,\Omega)>1$ be the constant from \eqref{eq:estimate_a_{Q,s}}. Recall the definition of $\rho, \kappa$ in \eqref{eq:rho_kappa_whitney_cubes}. For any $Q\in \cW_{\le \kappa}$ and $z\in \Omega^c\cap B_{6l_Q}(q_Q)$ we know $d_z\le 6l_Q= 6 \sqrt{d} \, s_Q\le 6 \sqrt{d} \, 2^{\kappa} \leq 6 \rho$. We use the finite overlap property of the Whitney decomposition to estimate:
		\begin{align*}
			\norm{\extns(g)}_{L^p(\Omega)}^p &\le (1+1/8)^d N^p\,  \sum\limits_{Q\in \cW_{\le \kappa}(\Omega)} s_Q^{d} \Bigg(\, \fint\limits_{B_{6l_Q}(q_Q)} \abs{g(z)} \widetilde{\mu}_s(\d z)\Bigg)^p\\
			&\le 2^d c_1\, N^p\, (6 \rho + 1)^{d+p-1} \sqrt{d}^{s-d}\sum\limits_{Q\in \cW_{\le \kappa}(\Omega)} s_Q^{s}   \int\limits_{B_{6l_Q}(q_Q)} \abs{g(z)}^p \mu_s(\d z)=: (\star)
		\end{align*}
		Now, we consider $i\in \N$ and two cubes $Q_1, Q_2\in \cW_{\le \kappa}(\Omega)$ such that $s_{Q_1}=s_{Q_2}=2^{-i}$. If $\abs{q_{Q_1}-q_{Q_2}}\ge 6 (l_{Q_1}+l_{Q_2}) = 12\sqrt{d} 2^{-i}$, then $B_{6l_{Q_1}(q_{Q_1})} \cap B_{6l_{Q_2}(q_{Q_2})}= \emptyset$. The number of cubes $Q\in \cW_{\le \kappa}$ with side length $2^{-i}$ that fit in the ball $B_{12 \sqrt{d}2^{-i}}(q_{Q_1})$ is bounded from above by $\lceil 12 \sqrt{d} 2^{-i}/s_Q\rceil^d= \lceil 12 \sqrt{d}\rceil^d$. Therefore, 
		\begin{equation}\label{eq:overlaps_same_sized_cubes}
			\# \{ Q\in \cW_{\le \kappa}\,|\, s_Q =2^{-i},\, B_{6l_{Q_1}(q_{Q_1})} \cap B_{6l_{Q}(q_{Q})}\neq \emptyset \}\le \lceil 12 \sqrt{d}\rceil^d.
		\end{equation}
		We set $c_2:= 2^d c_1 N^p (6 \rho + 1)^{d+p-1}$. For any $z\in \Omega^c$ such that there exists a cube $Q\in \cW_{\le \kappa}$ which satisfies $z\in B_{6l_Q}(q_Q)$, we have $d_z\le \abs{z-q_Q}\le 6l_Q\le 6\rho$. We change the order of summation and use the above consideration to estimate
		\begin{align*}
			(\star)&\le c_2 \sum\limits_{i=0}^{\infty}\sum\limits_{Q\in \cW_{\le \kappa}(\Omega)\cap J_i} 2^{-is}   \int\limits_{B_{6\cdot2^{-i}}(q_Q)} \abs{g(z)}^p \mu_s(\d z)\\
			&\le c_2\,\lceil 12 \sqrt{d}\rceil^{d} \frac{1}{1-2^{-s}} \int\limits_{\Omega_{6\rho}^{\text{ext}}} \abs{g(z)}^p \mu_s(\d z)\le c_2\,\lceil 12 \sqrt{d}\rceil^{d}  \frac{2}{s}  \norm{g}_{L^{p}(\Omega_{3\,\inr{\Omega}}^{\text{ext}};\,\mu_s ) }^p.
		\end{align*}
		Thus, the proposition is proven with the constant $C:= 2^{d+1} c_1 N^{p^\star} (6 \rho + 1)^{d+p^\star-1}\,\lceil 12 \sqrt{d}\rceil^{d}$.
		\end{proof}

		\begin{proposition}\label{prop:extension_continuity_seminorm} Let $s\in (0,1)$, $1\le p\le p^\star<\infty$. We assume that $\Omega$ is bounded and satisfies \eqref{eq:estimate_a_{Q,s}}. Then for every $g\in \cT^{s,p}(\Omega^c)$
			\begin{equation*}
			[\extns(g)]_{\VspOm}\le \frac{C}{(d+s(p-2))^{1/p}s^{2/p}}\, \norm{g}_{\cT^{s,p}(\Omega^c)}
			\end{equation*}
			with a constant $C=C(d,\Omega,p^\star)>0$, which is independent of $s$ and $p$.
			
		\end{proposition}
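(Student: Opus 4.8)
The plan is to bound $[\extns(g)]_{\VspOm}^p=(1-s)\iint_{(\Omega^c\times\Omega^c)^c}|\extns(g)(x)-\extns(g)(y)|^p|x-y|^{-d-sp}\,\d x\,\d y$ by splitting the region of integration into $\Omega\times\Omega$ and $2\,(\Omega\times\Omega^c)$ (using the symmetry of the integrand), and, inside each piece, by separating the interactions near $\partial\Omega$ from the rest. Recall from \eqref{eq:extension_def} and the remarks after it that $\extns(g)$ is supported in $\bigcup_{Q\in\cW_{\le\kappa}(\Omega)}Q^\star\subset\{x\in\Omega\,:\,d_x\le 6\rho\}$. The contributions in which one of the two points lies in $\Omega$ with $d_x>6\rho$ — where $\extns(g)$ vanishes — are elementary: there $|x-y|$ is bounded below, and combining $\int_{B_r(z)^c}|x-z|^{-d-sp}\,\d x=\omega_{d-1}r^{-sp}/(sp)$ with \autoref{prop:extension_continuity_Lp_part} for the $\Omega\times\Omega$ piece, and with the boundedness of $\Omega$ for the $\Omega\times\Omega^c$ piece (using $|x-y|\gtrsim d_y$ when $d_y$ is large, which reproduces the tail decay of $\mu_s$), one bounds these by $C\,s^{-2}\norm{g}_{\cT^{s,p}(\Omega^c)}^p$. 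Hence in the sequel it suffices to work with Whitney cubes in $\cW_{\le\kappa-2}(\Omega)$, the finitely many coarser scales being absorbed into this term.

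For the near-boundary part of $\iint_{\Omega\times\Omega}$ I would split, for $x\in Q_1\in\cW_{\le\kappa-2}(\Omega)$, according to $|x-y|\le l_{Q_1}$ or $|x-y|>l_{Q_1}$. On the first range $\extns(g)$ is smooth in $\Omega$, so $|\extns(g)(x)-\extns(g)(y)|\le|x-y|\sup_{[x,y]}|\nabla\extns(g)|$; applying the gradient estimate behind \autoref{lem:basic_properties_extns}(b) with $\sum_Q\nabla\phi_Q=0$ near $x$ (see \eqref{eq:extension_unity_close_to_boundary}), subtracting the $\widetilde{\mu}_s$-mean $b_{Q_1}$ of $g$ over $B:=B_{c\,l_{Q_1}}(q_{Q_1})\cap\Omega^c$, and invoking Jensen's inequality together with $a_{Q,s}\asymp l_Q^{s-d}$ and $\widetilde{\mu}_s(B)\asymp l_{Q_1}^{d-s}$ (from \eqref{eq:a_{Q,s}}--\eqref{eq:estimate_a_{Q,s}}), one obtains $\sup_{[x,y]}|\nabla\extns(g)|^p\le C\,l_{Q_1}^{-p}\,\fint_B\fint_B|g(z_1)-g(z_2)|^p\,\widetilde{\mu}_s\,\widetilde{\mu}_s$. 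Integrating $\int_{|x-y|\le l_{Q_1}}|x-y|^{p(1-s)-d}\,\d y=\omega_{d-1}l_{Q_1}^{p(1-s)}/(p(1-s))$ (which swallows the prefactor $(1-s)$), then over $Q_1$, and summing over $Q_1$ using the bounded overlap of $\{B_{c\,l_Q}(q_Q)\}_{Q\in J_i}$, one is left with a bound by
\[
\frac{C}{p}\sum_{i\ge 0}2^{i(d+s(p-2))}\iint\limits_{\substack{|z_1-z_2|\le c2^{-i}\\ d_{z_1}\le c2^{-i}}}|g(z_1)-g(z_2)|^p\,\widetilde{\mu}_s(\d z_1)\,\widetilde{\mu}_s(\d z_2),
\]
which by \autoref{lem:new_norm_trace} is at most $C\,(d+s(p-2))^{-1}[g]_{\cT^{s,p}(\Omega^c)}^p$.

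On the complementary range $|x-y|>l_{Q_1}$, and likewise for the near-boundary part of $\iint_{\Omega\times\Omega^c}$, I would use instead the pointwise bounds \autoref{lem:basic_properties_extns}(a),(c). For $x\in Q_1$, $y\in Q_2$ non-touching one has $|x-y|\asymp|q_{Q_1}-q_{Q_2}|+l_{Q_1}+l_{Q_2}$, so \autoref{lem:basic_properties_extns}(a) together with \eqref{eq:estimate_a_{Q,s}} gives, after integration in $x,y$,
\[
\iint_{Q_1\times Q_2}\frac{|\extns(g)(x)-\extns(g)(y)|^p}{|x-y|^{d+sp}}\,\d x\,\d y\le C\,\frac{l_{Q_1}^s\,l_{Q_2}^s}{\big(|q_{Q_1}-q_{Q_2}|+l_{Q_1}+l_{Q_2}\big)^{d+sp}}\iint\limits_{\substack{B_{30l_{Q_1}}(q_{Q_1})\\ B_{30l_{Q_2}}(q_{Q_2})}}|g(z_1)-g(z_2)|^p\,\widetilde{\mu}_s\,\widetilde{\mu}_s,
\]
and for $x\in Q_1$, $y\in\Omega^c$ one uses \autoref{lem:basic_properties_extns}(c) with $b=g(y)$, together with $|x-y|\gtrsim l_{Q_1}+d_y$ and $|z_1-y|+d_{z_1}+d_y\lesssim|x-y|$, to get the analogous bound with $B_{30l_{Q_2}}(q_{Q_2})$ replaced by $\{y\}$. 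Summing over the cube pairs, freezing one $z$-variable at a time, each Whitney-cube sum is put into the form handled by \autoref{lem:basic_properties_extns_2} (with $h=|g(\cdot)-g(z_2)|^p$, resp.\ $h=|g(\cdot)-g(y)|^p$), which turns it into an integral of $h$ against $\widetilde{\mu}_s$ over an exterior strip $\{z\in\Omega^c:d_z\le c2^{-i}\}$; the leftover geometric sums over dyadic scales (and over a dyadic decomposition of $|q_{Q_1}-q_{Q_2}|$) have ratios of the form $2^{-cs}$ and $2^{-c(d+s(p-2))}$, producing at worst a factor $s^{-2}$. Comparing $\widetilde{\mu}_s$ with $\mu_s$ on the bounded strips involved and recognising $\big((|z_1-z_2|+d_{z_1}+d_{z_2})\wedge1\big)^{-(d+s(p-2))}$ in the kernel, these parts are again bounded by $C\,(d+s(p-2))^{-1}s^{-2}\,\norm{g}_{\cT^{s,p}(\Omega^c)}^p$. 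Collecting the three contributions gives $[\extns(g)]_{\VspOm}^p\le C\,(d+s(p-2))^{-1}s^{-2}\,\norm{g}_{\cT^{s,p}(\Omega^c)}^p$; taking $p$-th roots yields the claim, and $C$ depends only on $d,\Omega,p^\star$ since all the auxiliary estimates used above are uniform in $s\in(0,1)$ and $1\le p\le p^\star$.

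The step I expect to be the main obstacle is the multiscale summation in the off-diagonal part: one has to carry out the case analysis on the ratio of the cube sizes $l_{Q_1},l_{Q_2}$ to the separation $|z_1-z_2|+d_{z_1}+d_{z_2}$ carefully enough that, after \autoref{lem:basic_properties_extns_2}, the surviving sums over dyadic scales telescope to the right negative power of the separation with constants that do not blow up as $s\to1^-$. In particular, the geometric series with ratio $2^{-(d+s(p-2))}$ is precisely what produces the factor $(d+s(p-2))^{-1/p}$ in the statement, which for $d=1$ fails to remain bounded as $s\to1^-$ and accounts for the extra dependence on a lower bound for $1-s$ recorded in \autoref{th:trace-p-1}.
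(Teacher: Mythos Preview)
Your proposal is correct and follows essentially the same approach as the paper: the paper writes $y=x+h$ and splits the seminorm into (I) large $|h|$, (II) $|h|$ below the Whitney scale of $x$ (handled by the gradient bound of \autoref{lem:basic_properties_extns}(b),(d), then \autoref{lem:basic_properties_extns_2} and \autoref{lem:new_norm_trace}), (III) $|h|$ between the Whitney scale and a fixed threshold (further split into $x+h\in\Omega$ via \autoref{lem:basic_properties_extns}(a) and $x+h\in\Omega^c$ via \autoref{lem:basic_properties_extns}(c), then again \autoref{lem:basic_properties_extns_2} and \autoref{lem:new_norm_trace}), and (IV) $x$ far from $\partial\Omega$ where $\extns(g)$ vanishes. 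Your cube-pair organisation is an equivalent bookkeeping of the same decomposition, and the factors $s^{-2}$ and $(d+s(p-2))^{-1}$ arise from the same geometric series and from \autoref{lem:new_norm_trace} respectively, exactly as you describe.
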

		\begin{proof}
			We set $c_1:= (\sqrt{d}\,2^{\kappa+1})  \wedge 1/2^4<2\rho \wedge 1/8$, $j_0:= -\kappa -6$ where $\kappa, \rho$ are as in \eqref{eq:rho_kappa_whitney_cubes} and split the integration domain of the seminorm into 
			\begin{align*}
				[\extns(g)]_{\VspOm}^p &= (1-s)\, \int\limits_{ \Omega} \int\limits_{\{\abs{h}\ge c_1\}}\frac{\abs{\extns(g)(x)-\extns(g)(x+h)}^p}{\abs{h}^{d+sp}}\d h \d x\\				
				&\quad+\Bigg( \sum\limits_{j=j_0}^\infty (1-s)\, \int\limits_{ D_j} \int\limits_{\{\abs{h}<c_1\,2^{-j}\}}\frac{\abs{\extns(g)(x)-\extns(g)(x+h)}^p}{\abs{h}^{d+sp}}\d h \d x\Bigg)\\
				&\quad+\Bigg( \sum\limits_{j=j_0}^\infty (1-s)\, \int\limits_{ D_j} \int\limits_{\{c_1\,2^{-j}\le \abs{h}<c_1\}}\frac{\abs{\extns(g)(x)-\extns(g)(x+h)}^p}{\abs{h}^{d+sp}}\d h \d x\Bigg)\\
				&\quad + \Bigg( \sum\limits_{j<j_0} (1-s)\, \int\limits_{ D_j} \int\limits_{\{\abs{h}<c_1\}}\frac{\abs{\extns(g)(x)-\extns(g)(x+h)}^p}{\abs{h}^{d+sp}}\d h \d x\Bigg)\\
				&=: (\text{I})+ (\text{II})+ (\text{III})+ (\text{IV}).
			\end{align*}
			Recall that $D_j$ is the collection of Whitney cubes with side length $2^{-j}$, see \eqref{eq:def_D_j}. We handle these four terms separately. 
			
			\textit{Estimate of (IV):} For any $x\in D_j$, $j<j_0$ and $\abs{h}<c_1$, the distance of $x$ as well as $x+h$ to the boundary $\partial \Omega$ is bigger or equal to $d_x-\abs{h}\ge\distw{D_j}{\partial \Omega}-c_1\ge \sqrt{d}2^{-j-2}-c_1\ge  \sqrt{d}(2^{\kappa +4} - 2^{\kappa+1}) >  6\rho$. Therefore, $\extns(g)(x)=0=\extns(g)(x+h)$ and $(\text{IV})=0$. 
			
			\textit{Estimate of (I):} Using triangle inequality and the definition of the extension $\extns(g)$, we estimate $(\text{I})$ as follows:
			\begin{align*}
				(\text{I})&\le 2^p(1-s)\Bigg(\int\limits_{ \Omega} \int\limits_{\abs{h}\ge c_1}\frac{\abs{\extns(g)(x)}^p}{\abs{h}^{d+sp}}\d h \d x
				+ \int\limits_{ \Omega} \int\limits_{\abs{h}\ge c_1}\frac{\abs{\extns(g)(x+h)}^p}{\abs{h}^{d+sp}} \d h \d x\Bigg)\\
				&\le 2\frac{2^p(1-s)}{sp}c_1^{-sp}\omega_{d-1} \norm{\extns(g)}_{L^p(\Omega)}^p +  2^p\, 2^{-d-s(p-1)}\abs{\Omega}\int\limits_{ \Omega^c } \abs{g(y)}^p\mu_s(\d y)
			\end{align*}
			By \autoref{prop:extension_continuity_Lp_part} and the previous inequality, we find a constant $c_2=c_2(d,\Omega, p^\star)>0$ such that 
			\begin{equation*}
				(\text{I})\le\frac{1-s}{s^{2}}\frac{2^{p+1}\omega_{d-1}c_2^p}{pc_1^p} \norm{g}_{L^{p}(\Omega^c;\,\mu_s)}^p +  2^p\, \abs{\Omega}\norm{g}_{L^{p}(\Omega^c;\,\mu_s)}^p.
			\end{equation*}
			This is the desired estimate for $(\text{I})$. 
			
			\textit{Estimate of (II):} For the moment we fix $j\in \N$, $j\ge j_0$, $Q_j\in J_j$, $x\in Q_j$ and $\abs{h}<c_1 2^{-j}\le 1/2^4\, s_{Q_j}$. Under these assumptions $x+h\in Q^\star_j$ where $Q^\star_j$ is the cube with the same center as $Q_j$ but side length $(1+1/8)s_{Q_j}$. Thus, for any $t\in [0,1]$ the vector $x+th$ is either in $Q_j$ or in a neighboring cube, say $Q$ touching $Q_j$. By \eqref{eq:touching_cubes}, $1/4\, l_{Q_j}\le l_{Q}\le 4\, l_{Q_j}$ and $\abs{q_{Q_j}-q_Q}\le \diam(Q_j)+\diam(Q)\le (1+4)l_{Q_j}$. Further, for $z\in B_{30l_{Q}}(q_Q)$ we find $\abs{z-q_{Q_j}}\le (30\cdot 4 + 5)\,l_{Q_j}$. Set $c_3:=30\cdot 4 + 5$, let $c_4>0$ be the constant from \autoref{lem:basic_properties_extns} and $c_5>0$ the constant from \eqref{eq:estimate_a_{Q,s}}. We want to apply \autoref{lem:basic_properties_extns} (b) and (d) to estimate $(\text{II})$. We set $j_1:= j_0+8 = - \kappa + 2$ and write
			\begin{align*}
				(\text{II}) &=\sum\limits_{j=j_1}^\infty (1-s)\, \int\limits_{ D_j} \int\limits_{\{\abs{h}<c_1\,2^{-j}\}}\frac{\abs{\extns(g)(x)-\extns(g)(x+h)}^p}{\abs{h}^{d+sp}}\d h \d x\\
				&\quad+ \sum\limits_{j=j_0}^{j_1-1} (1-s)\, \int\limits_{ D_j} \int\limits_{\{\abs{h}<c_1\,2^{-j}\}}\frac{\abs{\extns(g)(x)-\extns(g)(x+h)}^p}{\abs{h}^{d+sp}}\d h \d x\\
				&=: (\text{II}_1)+ (\text{II}_2).
			\end{align*}
			
			For all $j\ge j_1$ all neighboring cubes of $Q_j$ have side lengths at most $2^{\kappa}$ and, thus, $B_{c_1 2^{-j}}(x)\subset D_{\ge -\kappa}$. Since $\extns(g)$ is smooth in $\Omega$, the fundamental theorem of calculus and \autoref{lem:basic_properties_extns} (b) yield
			\begin{align*}
				&\abs{\extns(g)(x)-\extns(g)(x+h)}= \bigabs{\int\limits_{0}^1 \nabla \extns(g)(x+th)\cdot h \d t}\le \abs{h}\, \sup\limits_{B_{c_12^{-j}}(x)} \abs{\nabla \extns(g)}\\
				&\qquad\le c_4\abs{h}\, \frac{2^{j+2}}{\sqrt{d}}\Bigg(c_5^2\sqrt{d}^{2(s-d)}\,2^{2(j+2)(d-s)}\!\!\!\! \int\limits_{( B_{c_3 l_{Q_j}}(q_{Q_j}) )^2}\!\! \abs{g(z_1)-g(z_2)}^p (\widetilde{\mu}_s\otimes \widetilde{\mu}_s)(\d (z_1,z_2))  \Bigg)^{1/p}
			\end{align*}  
			Using this, we estimate
			\begin{align*}
				(\text{II}_1)&\le  (1-s)\,c_4^p 4^{2(d-s)+p}c_5^2\sum\limits_{j=j_0}^\infty 2^{2j(d-s)+jp}\sum\limits_{Q\in J_j} \,\int\limits_{ \abs{h}<c_12^{-j} } \abs{h}^{-d+p(1-s)}\d h\\
				&\qquad\qquad \cdot \int\limits_{Q} \int\limits_{(B_{c_3l_{Q}}(q_{Q}) )^2} \abs{g(z_1)-g(z_2)}^p(\widetilde{\mu}_s\otimes \widetilde{\mu}_s)(\d (z_1,z_2)) \d x \\
				&\le  \frac{\omega_{d-1}\,c_1^{p(1-s)}c_4^p\,4^{2(d-s)+p}c_5^2}{p}\sum\limits_{j=j_0}^\infty 2^{2j(d-s)+jp-jp(1-s)}\sum\limits_{Q\in J_j} \\
				&\qquad\qquad\cdot\int\limits_{Q} \int\limits_{ \substack{(B_{c_3l_{Q}}(q_{Q}))^2\\  \abs{z_1-z_2}\le 2\sqrt{d}c_3\,2^{-j}}}  \abs{g(z_1)-g(z_2)}^p (\widetilde{\mu}_s\otimes \widetilde{\mu}_s)(\d (z_1,z_2)) \d x \,.
			\end{align*}
			We define the functions $f_j:\Omega\to \R$, $h_j:\Omega^c\to \R$ via
			\begin{align*}
				h_j(z_1)&:= \int\limits_{ \abs{z_1-z_2}\le 2\sqrt{d} c_3\,2^{-j}}  \abs{g(z_1)-g(z_2)}^p\widetilde{\mu}_s(\d z_2)\qquad (z_1\in \Omega^c),\\
				f_j(x)&:= \int\limits_{ B_{c_3l_{Q}}(q_{Q}) } h_j(z_1)\widetilde{\mu}_s(\d z_1)\qquad (x\in \Omega)
			\end{align*}
			whenever there exists $Q\in J_j$ such that $x\in \overset{\circ}{Q}$ and else we set $f_j=0$. With this notation we estimate $(\text{II}_1)$ using \autoref{lem:basic_properties_extns_2} and $d_z\le c_3\sqrt{d}2^\kappa$ for $z\in B_{c_3l_Q}(q_Q)\cap \Omega^c$ and $Q\in \cW_{\le \kappa}$
			\begin{align*}
				(\text{II}_1) &\le \frac{\omega_{d-1}\,c_1^{p(1-s)}c_4^p\,4^{2(d-s)+p}c_5^2}{p}\sum\limits_{j=j_0}^\infty 2^{2j(d-s)+jps} \int\limits_{D_j} f_j(x) \d x\\
				&\le \frac{\omega_{d-1}\,(c_1\vee 1)^{p}c_4^p\,4^{2d+p}c_5^2}{p}\,\,c_6\, \sum\limits_{j=j_0}^\infty 2^{j(d+ s(p-2))} \int\limits_{\Omega_{\sqrt{d}c_3 2^{-j}}^{\text{ext}}} h_j(z_1)\widetilde{\mu}_s(\d z_1)\\
				&\le \frac{c_8}{d+s(p-2)}\!\!\! \int\limits_{\substack{\Omega_{\sqrt{d} c_3 }^{\text{ext}}\times \Omega^c\\ \abs{z_1-z_2}\le 2\sqrt{d}c_3}} \!\!\!\!\! \frac{\abs{g(z_1)-g(z_2)}^p}{((\abs{z_1-z_2}+d_{z_1}+d_{z_2})\wedge 1)^{d+s(p-2)}}(\widetilde{\mu}_s\otimes \widetilde{\mu}_s)(\d (z_1,z_2)).
			\end{align*}
			In the last inequality we used \autoref{lem:new_norm_trace}. Here $c_6=c_6(d,c_3)>0$ is the constant from \autoref{lem:basic_properties_extns_2}, $c_7=c_7 (d,p, c_3)>0$ is the constant from \autoref{lem:new_norm_trace} and
			\begin{equation*}
				c_8=c_8(d,p,c_1,c_3,c_5,c_6, c_7):= \omega_{d-1}\,(c_1\vee 1)^{p}c_4^p\,4^{2d+p}c_5^2/p\,(c_3\sqrt{d}2^\kappa+1)^{2d+2(p-1)} \,c_6\,c_7.
			\end{equation*}
			Just as in the proof of the estimate of $(\text{II}_1)$ we apply the fundamental theorem of calculus and \autoref{lem:basic_properties_extns} (d), \eqref{eq:estimate_a_{Q,s}}, \autoref{lem:basic_properties_extns_2} to estimate $(\text{II}_2)$ by
			\begin{align*}
				(\text{II}_2)&\le \frac{\omega_{d-1} (c_12^{-j_0})^{p(1-s)}}{p} \sum\limits_{j=j_0}^{j_1-1} \sum\limits_{Q \in J_j} \int\limits_{Q} \max\limits_{ y\in Q^\star }\big| \nabla \extns(g)(y)\big|^p  \d x\\
				&\le  \frac{\omega_{d-1} (c_12^{-j_0})^{p(1-s)}}{p} \Big(c_4 \frac{2^{(j_1+1)}}{\sqrt{d}}\Big)^p c_5 (\sqrt{d}\, 2^{-j_1+1})^{s-d} \sum\limits_{j=j_0}^{j_1-1} \sum\limits_{Q \in J_j} \int\limits_{Q} \int\limits_{ B_{c_3 l_Q}(q_Q) } \abs{g(z)}^p \widetilde{\mu}_s(\d z)\d x \\
				&\le \frac{\omega_{d-1} (c_12^{-j_0})^{p(1-s)}}{p} \Big(c_4 \frac{2^{(j_1+1)}}{\sqrt{d}}\Big)^p c_5 (\sqrt{d}\, 2^{-j_1+1})^{s-d}  \sum\limits_{j=j_0}^{j_1-1} c_6 \int\limits_{ \Omega_{\sqrt{d}c_3 2^{-j }}^{\text{ext} } } \abs{g(z)}^p \widetilde{\mu}_s(\d z)\\
				&\le \frac{\omega_{d-1} (c_12^{-j_0}\vee 1)^{p}}{p} \Big(c_4 \frac{2^{(j_1+1)}}{\sqrt{d}}\Big)^p c_5  \frac{(1+\sqrt{d}c_3 2^{-{j_0}})^{d+p-1}}{(\sqrt{d}\, 2^{-j_1+1} \wedge 1)^{d}} c_6 (j_1-j_0) \norm{g}_{L^{p}(\Omega_{\sqrt{d}c_3 2^{-j_0} }^{\text{ext}};\,\mu_s )}^p.
			\end{align*}
			
				\textit{Estimate of (III):} We put $h_m:= c_1\,2^{-m}$ and write
			\begin{align*}
				(\text{III})&= (1-s)\, \sum\limits_{j=j_0}^\infty \sum\limits_{m=0}^{j-1}\, \int\limits_{ D_j} \int\limits_{\{h_{m+1}\le \abs{h}<h_{m}\}}\frac{\abs{\extns(g)(x)-\extns(g)(x+h)}^p}{\abs{h}^{d+sp}}\d h \d x\\
				&\le (1-s)\,\sum\limits_{m=0}^{\infty}\sum\limits_{j=m+j_0}^\infty\, \int\limits_{ D_j} \int\limits_{\{h_{m+1}\le \abs{h}<h_{m}\}}\frac{\abs{\extns(g)(x)-\extns(g)(x+h)}^p}{\abs{h}^{d+sp}}\d h \d x\\
				&=(1-s)\,\sum\limits_{m=0}^{\infty}\, \int\limits_{ D_{\ge m+j_0}} \int\limits_{\{h_{m+1}\le \abs{h}<h_{m}\}}\frac{\abs{\extns(g)(x)-\extns(g)(x+h)}^p}{\abs{h}^{d+sp}}\d h \d x\\
				&\le (1-s)\,\sum\limits_{m=0}^{\infty}\,h_{m+1}^{-d-sp}\Bigg( \int\limits_{ D_{\ge m+j_0}} \int\limits_{\Omega}\abs{\extns(g)(x)-\extns(g)(y)}^p\1_{\abs{x-y}\le h_m}\d y \d x\\
				&\qquad+\int\limits_{ D_{\ge m+j_0}} \int\limits_{ \Omega^c}\abs{\extns(g)(x)-g(y)}^p\1_{\abs{x-y}\le h_m}\d y \d x \Bigg)\\
				&=: (1-s)\Big((\text{III}_1)+(\text{III}_2)\Big).
			\end{align*}
			\textit{Estimate of ($\text{III}_1$):} For $x\in Q\subset D_{\ge m+j_0}$ and $y\in \Omega$ such that $\abs{x-y}\le h_m$, we have $d_y\le d_x+ \abs{x-y}\le \distw{Q}{\partial \Omega} + \diam(Q)+ h_m \le (5 \sqrt{d} + c_1) 2^{-m-j_0}\le \sqrt{d}\,2^{3-j_0-m}$ and, therefore, $y\in D_{\ge m+j_0-3}$ by \eqref{eq:distance_diam_comparable}. We calculate
			\begin{align}\label{eq:seminorm_extension_cont_interior_far_away_1}
				(\text{III}_1)&\le \sum\limits_{m=0}^{\infty}\sum\limits_{k=m+j_0}^\infty\sum\limits_{n= m+j_0-3}^\infty\,h_{m+1}^{-d-sp}  \iint\limits_{ \substack{ D_{n}\,D_{k}\\ \abs{x-y}\le h_m}}\abs{\extns(g)(x)-\extns(g)(y)}^p\d x\d y \nonumber \\
				&=\sum\limits_{m=j_1-j_0+3}^{\infty}\sum\limits_{k=m+j_0}^\infty\sum\limits_{n= m+j_0-3}^\infty\,h_{m+1}^{-d-sp}  \iint\limits_{ \substack{ D_{n}\,D_{k}\\ \abs{x-y}\le h_m}}\abs{\extns(g)(x)-\extns(g)(y)}^p\d x\d y \nonumber \\
				&\quad+ \sum\limits_{m=0}^{j_1-j_0+2}\sum\limits_{k=m+j_0}^{\infty}\sum\limits_{n= m+j_0-3}^{\infty}\,h_{m+1}^{-d-sp}  \iint\limits_{ \substack{ D_{n}\,D_{k}\\ \abs{x-y}\le h_m}}\abs{\extns(g)(x)-\extns(g)(y)}^p\d x\d y\nonumber\\
				&=: (\text{III}_{1,1})+ (\text{III}_{1,2}).
			\end{align}
			We estimate $(\text{III}_{1,1})$ first. For $m\ge j_1-j_0+3=11$, $x\in Q_k\in J_k$ and $y\in Q_n\in J_n$, $n\ge m+j_0-3$, $k\ge m+j_0$ such that $\abs{x-y}\le h_m$ and $z_1\in B_{30l_{Q_k}}(q_{Q_k})$, $z_2\in B_{30l_{Q_n}}(q_{Q_n})$ we have $n,k\ge j_1$ and  $\abs{z_1-z_2}\le \abs{z_1-q_{Q_k}}+ \abs{q_{Q_k}-x}+\abs{x-y}+ \abs{y-q_{Q_n}}+\abs{q_{Q_n}-z_2}\le 31\sqrt{d} 2^{-k}+ c_1 2^{-m}+ 31\sqrt{d} 2^{-n} \le c_9\,2^{-m}$, where $c_9:= 31\sqrt{d}2^{-j_0}+c_1+31\sqrt{d} 2^{3-j_0}$. Note that $s_{Q_k}, s_{Q_n}\le 2^{\kappa-2}$. \autoref{lem:basic_properties_extns} (a) yields 
			\begin{align}\label{eq:seminorm_extension_cont_interior_far_away_2}
				\abs{\extns(g)(x)-\extns(g)(y)}^p&\le c_4^p\, a_{Q_k,s}a_{Q_n,s}  \!\!\! \iint\limits_{ \substack{ B_{30l_{Q_k}}(q_{Q_k}) \\B_{30l_{Q_n}}(q_{Q_n}) } } \!\!\!\! \abs{g(z_1)-g(z_2)}^p\1_{\abs{z_1-z_2}\le c_{9}2^{-m}} \widetilde{\mu}_s(\d z_2)\widetilde{\mu}_s(\d z_1)\nonumber\\
				&=  c_4^p\, a_{Q_k,s}a_{Q_n,s} \tilde{f}_m(x)
			\end{align}
			where the $\tilde{f}_m:\R^d\to \R$ is defined by 
			\begin{equation*}
				\tilde{f}_m(x):=  \int\limits_{B_{30l_{Q}}(q_{Q})  } \tilde{h}_m(z_1)\widetilde{\mu}_s(\d z_1) 
			\end{equation*}
			for $x\in \R^d$ whenever there exists $Q\in J_k$ such that $x\in \overset{\circ}{Q}$ and else we set $\tilde{f}_m=0$. Here $\tilde{h}_m:\Omega^c\to \R$ is defined by
			\begin{align*}
				\tilde{h}_m(z_1)&:= \int\limits_{B_{30 l_{Q_n}  }(q_{Q_n})} \abs{g(z_1)-g(z_2)}^p\1_{\abs{z_1-z_2}\le c_{9}2^{-m}} \widetilde{\mu}_s(\d z_2)\qquad (z_1\in \Omega^c).
			\end{align*}
			 Thus, \autoref{lem:basic_properties_extns_2} together with \eqref{eq:seminorm_extension_cont_interior_far_away_2} yields 
			\begin{align}\label{eq:seminorm_extension_cont_interior_far_away_2_1}
				 &\iint\limits_{ \substack{D_{n}\, D_{k}\\ \abs{x-y}\le h_m}}\abs{\extns(g)(x)-\extns(g)(y)}^p\d y \d x \le c_4^p a_{Q_k,s}a_{Q_n,s}\, \iint\limits_{ \substack{D_{n}\, D_{k}\\ \abs{x-y}\le h_m}} \tilde{f}_m(x)\d x \d y\nonumber\\
				 &\qquad\le c_4^p c_{10}\,2^{-kd}  a_{Q_k,s}a_{Q_n,s}\, \int\limits_{D_{n}} \int\limits_{B_{h_m+a_12^{-k}}(y)}\tilde{h}_m(z_1)\widetilde{\mu}_s(\d z_1) \d y=:(\widetilde{\text{III}}_{1,1}).
			\end{align}
			Here $c_{10}=c_{10}(d)>0$ and $a_1=a_1(d)>0$ are the constants from \autoref{lem:basic_properties_extns_2}. For $y\in D_n$ and $z_1\in B_{h_m+a_1 2^{-k}}(y)\cap \Omega^c$ we find $d_{z_1}\le \abs{y-z_1}\le (c_1+a_1 2^{-j_0})2^{-m}$ and set $ c_{11}:=(c_1+a_1 2^{-j_0})$. Then, $(\widetilde{\text{III}}_{1,1})$ becomes after applying \autoref{lem:basic_properties_extns_2} again 
			\begin{align}\label{eq:seminorm_extension_cont_interior_far_away_3}
				(\widetilde{\text{III}}_{1,1})&\le \frac{c_4^p c_{10}}{2^{kd}} \,  a_{Q_k,s}a_{Q_n,s}\, \int\limits_{D_{n}} \int\limits_{B_{30 l_{Q_n}}(q_{Q_n})  }\int\limits_{\Omega_{c_{11}2^{-m}}^{\text{ext}}} \abs{g(z_1)-g(z_2)}^p\1_{\abs{z_1-z_2}\le c_92^{-m}} \widetilde{\mu}_s(\d z_1) \widetilde{\mu}_s(\d z_2) \d y\nonumber\\
				&\le  \frac{c_4^p c_{12}c_{10}}{2^{(k+n)d}}  a_{Q_k,s}a_{Q_n,s}\,\int\limits_{\Omega^c} \int\limits_{\Omega_{c_{11}2^{-m}}^{\text{ext}}} \abs{g(z_1)-g(z_2)}^p\1_{\abs{z_1-z_2}\le c_92^{-m}} \widetilde{\mu}_s(\d z_1) \widetilde{\mu}_s(\d z_2)\nonumber \\
				&\le \frac{c_4^p c_5 c_{12}c_{10}}{2^{s(k+n)}} \,\int\limits_{\Omega^c} \int\limits_{\Omega_{c_{11}2^{-m}}^{\text{ext}}} \abs{g(z_1)-g(z_2)}^p\1_{\abs{z_1-z_2}\le c_92^{-m}} \widetilde{\mu}_s(\d z_1) \widetilde{\mu}_s(\d z_2).
			\end{align}
			In the last inequality we used \eqref{eq:estimate_a_{Q,s}} and $c_{12}=c_{12}(d)>0$ is the constant from \autoref{lem:basic_properties_extns_2}. We set $c_{13}:= c_4^p  c_5 c_{10} c_{12} 2^{d+p}\, (c_0 \wedge 1)^{-d-p}$. Recall that $j_1-j_0=8$. The estimates \eqref{eq:seminorm_extension_cont_interior_far_away_2_1} and \eqref{eq:seminorm_extension_cont_interior_far_away_3} yield 
			\begin{align*}
				&(\text{III}_{1,1})\le c_{13} \sum\limits_{m=j_1-j_0+3}^{\infty}\sum\limits_{k=m+j_0}^\infty\sum\limits_{n= m+j_0-3}^\infty\,2^{m(d+sp)} \,2^{-s(k+n)}  \\
				&\qquad\qquad\times \quad  \int\limits_{\Omega^c} \int\limits_{\Omega_{c_{11}2^{-m}}^{\text{ext}}} \abs{g(z_1)-g(z_2)}^p\1_{\abs{z_1-z_2}\le c_92^{-m}} \widetilde{\mu}_s(\d z_1) \widetilde{\mu}_s(\d z_2)\\
				&= \frac{c_{13}}{2^{2sj_0-3s}}  \Big(\frac{2^s}{2^s-1}\Big)^2 \sum\limits_{m=11}^{\infty}2^{m(d+s(p-2))} \int\limits_{\Omega^c} \int\limits_{\Omega_{c_{11}2^{-m}}^{\text{ext}}} \abs{g(z_1)-g(z_2)}^p\1_{\abs{z_1-z_2}\le c_92^{-m}} \widetilde{\mu}_s(\d z_1) \widetilde{\mu}_s(\d z_2)\\
				&\le c_{14} \frac{(1+c_9+c_{11})^{2d+2(p-1)}}{s^2(d+s(p-2))}  \,\iint\limits_{\substack{\Omega_{c_{11}  }^{\text{ext}} \, \Omega^c\\ \abs{z_1-z_2}\le c_9}  } \frac{\abs{g(z_1)-g(z_2)}^p}{((\abs{z_1-z_2}+d_{z_1}+d_{z_2})\wedge 1)^{d+s(p-2)}}\mu_s(\d z_1) \mu_s(\d z_2).
			\end{align*}
			In the last estimate we used \autoref{lem:new_norm_trace}. Here $c_{14}:= 2^{5-2(j_0\wedge 0)}c_{13} c_{15}$ and $c_{15}=c_{15}(d,p,c_9,c_{11})>0$ is the constant from \autoref{lem:new_norm_trace}. This is the desired estimate for $(\text{III}_{1,1})$. To estimate $(\text{III}_{1,2})$, we calculate
			\begin{align*}
				(\text{III}_{1,2})&\le \frac{j_1-j_0+2}{\big(c_1 2^{-(j_1-j_0-1)}\big)^{d+sp}} \sum\limits_{k=j_0}^{\infty}\sum\limits_{n= j_0-3}^{\infty}  \iint\limits_{ \substack{ D_{k}\, D_{n}\\ \abs{x-y}\le c_1}}\abs{\extns(g)(x)-\extns(g)(y)}^p\d x\d y\\
				&\le 2^{p+1}\frac{10}{\big(c_1 2^{-7}\big)^{d+sp}} \sum\limits_{k,n= j_0-3}^{\infty}  \iint\limits_{ \substack{ D_{k}\, D_{n}\\ \abs{x-y}\le c_1}}\abs{\extns(g)(x)}^p\d x\d y\\
				&\le \frac{2^{p+5}}{\big(c_1 2^{-7}\big)^{d+sp}} \omega_{d-1}c_1^{d} \norm{\extns(g)}_{L^p(\Omega)}^p\le \frac{2^{p+5}\,\omega_{d-1}c_1^{d}}{\big(c_1 2^{-7} \wedge 1\big)^{d+p}}  \,\frac{c_2^p}{s} \norm{g}_{L^{p}(\Omega_{3 \inr{\Omega}}^{\text{ext}};\,\mu_s)}^p. 
			\end{align*}
			Here we used \autoref{prop:extension_continuity_Lp_part}. Combining the estimates of $(\text{III}_{1,1})$ and $(\text{III}_{1,2})$ yields the desired estimate of $(\text{III}_1)$. 
			
			\textit{Estimate of $(\text{III}_2)$:} In contrast to ($\text{III}_1$) we integrate over $\Omega^c$ where the extension is just $g$. For $x\in Q\in J_{k}$, $k\ge m+j_0$ and $y\in \Omega^c$ such that $\abs{x-y}\le h_m= c_1 2^{-m}$, the distance of $y$ to the center of the cube $q_{Q}$ is smaller than $d_y\le \abs{y-q_Q}\le \abs{y-x}+ \abs{x-q_Q}\le c_1 2^{-m} + \sqrt{d}2^{-k}\le \sqrt{d}(c_1 + 2^{-j_0}) 2^{-m}$. Set $c_{16}:= 2c_1^{-1}\sqrt{d}(c_1 + 2^{-j_0}+1)$. Thus, $(1-s)c_{16}^{-s} h_{m+1}^{-s}\le (1-s)d_y^{-s}$. We use this and split $(1-s) \,(\text{III}_2)$ into 

			\begin{align}\label{eq:seminorm_extension_cont_exterior}
				&(1-s) (\text{III}_2) \le \sum\limits_{m=0}^{j_1-j_0-1}\frac{(1-s)}{h_{m+1}^{d+sp}}\sum\limits_{k=m+j_0}^\infty\sum\limits_{Q\in J_k} \int\limits_{ Q} \int\limits_{ B_{c_{16}2^{-m}}(q_Q)}\hspace{-0.6cm} \abs{\extns(g)(x)-g(y)}^p\1_{\abs{x-y}\le h_m} \d y \d x\nonumber\\
				&\quad+ c_{16}^{s}  \sum\limits_{m=j_1-j_0}^{\infty}h_{m+1}^{-d-s(p-1)}\sum\limits_{k=m+j_0}^\infty\sum\limits_{Q\in J_k} \int\limits_{ Q} \int\limits_{ B_{c_{16}2^{-m}}(q_Q)}\hspace{-0.6cm} \abs{\extns(g)(x)-g(y)}^p\1_{\abs{x-y}\le h_m} \widetilde{\mu}_s(\d y) \d x\nonumber\\
				&=:  (\text{III}_{2,1})+(\text{III}_{2,2}).
			\end{align}	
				
			We estimate $(\text{III}_{2,1})$ by
			\begin{align*}
				(\text{III}_{2,1})&\le  2^p  (j_1-j_0)\Bigg( (1-s)\sum\limits_{k=j_0}^\infty\sum\limits_{Q\in J_k} \int\limits_{ Q} \int\limits_{ B_{c_{16}}(q_Q)}\hspace{-0.3cm} \abs{\extns(g)(x)}^p\1_{\abs{x-y}\le c_1} \d y \d x \\
				&\qquad\qquad\qquad\qquad +c_{16}^{d+sp}  \sum\limits_{k=j_0}^\infty\sum\limits_{Q\in J_k} \int\limits_{ Q} \int\limits_{ B_{c_{16}}(q_Q)}\hspace{-0.3cm} \abs{g(y)}^p\1_{\abs{x-y}\le c_1} \mu_s(\d y) \d x\Bigg)\\
				&\le  2^p  \,8\Big( (1-s)\omega_{d-1} c_1^{d}\norm{\extns(g)}_{L^p(\Omega)}^p+ c_{16}^{d+sp} \omega_{d-1} c_1^{d} \norm{g}_{L^{p}(\Omega_{c_{16}}^{\text{ext}};\,\mu_s) }^p\Big)\\
				&\le 2^p  \,8\Big( (1-s)\omega_{d-1} c_1^{d} \frac{c_2}{s}\norm{g}_{L^{p}(\Omega_{3 \inr{\Omega}}^{\text{ext}};\,\mu_s)}^p+ c_{16}^{d+sp} \omega_{d-1} c_1^{d} \norm{g}_{L^{p}(\Omega_{c_{16}}^{\text{ext}};\,\mu_s) }^p\Big).
			\end{align*}
			
			Here, we used \autoref{prop:extension_continuity_Lp_part}. We apply \autoref{lem:basic_properties_extns} (c) and \eqref{eq:estimate_a_{Q,s}} to estimate $(\text{III}_{2,2})$.
			\begin{align}\label{eq:seminorm_extension_cont_exterior_1}
				(\text{III}_{2,2})\le  \sum\limits_{m=j_1-j_0}^{\infty}\frac{c_4^p\,c_5 c_{16}^{s} }{h_{m+1}^{d+s(p-1)}}\sum\limits_{k=m+j_0}^\infty 2^{k(d-s)} \sum\limits_{Q\in J_k}\int\limits_{Q}\int\limits_{ B_{30l_Q}(q_Q)} \int\limits_{ B_{\frac{c_{16}}{2^{m}}  }(q_Q)} \hspace{-0.5cm}\abs{g(z)-g(y)}^p\widetilde{\mu}_s(\d y) \widetilde{\mu}_s(\d z)\d x.
			\end{align}
			 For $z\in B_{30\sqrt{d} 2^{-k}}(q_Q)$ and $y\in \Omega^c$ with $\abs{q_Q-y}\le c_{16}2^{-m}$ we have $ \abs{y-z}\le (c_{16}+ \sqrt{d}30)2^{-m}$. We write $c_{17}:= c_{16}+\sqrt{d}30$. Now, we apply \autoref{lem:basic_properties_extns_2} with $r = +\infty$ and conclude with a positive constant $c_{18}=c_{18}(d)$ 
			\begin{align}\label{eq:seminorm_extension_cont_exterior_2}
			\begin{split}
				&\sum\limits_{Q\in J_k}\int\limits_{Q}\int\limits_{ B_{30l_Q}(q_Q)} \int\limits_{ B_{c_{16}2^{-m}  }(q_Q)} \hspace{-0.5cm}\abs{g(z)-g(y)}^p\widetilde{\mu}_s(\d y) \widetilde{\mu}_s(\d z)\d x \\
				&\qquad\le c_{18}2^{-kd}\int\limits_{\Omega_{30\sqrt{d} 2^{-m} }^{\text{ext}} } \int\limits_{B_{c_{17}2^{-m}}(z)} \hspace{-0.5cm}\abs{g(z)-g(y)}^p\widetilde{\mu}_s(\d y) \widetilde{\mu}_s(\d z) \,.
				\end{split}
			\end{align}
			By \eqref{eq:seminorm_extension_cont_exterior_1}, \eqref{eq:seminorm_extension_cont_exterior_2} and \autoref{lem:new_norm_trace},
			\begin{align*}
				(\text{III}_{2,2})&\le \frac{c_4^p c_5 c_{16}^{s}}{c_1^{d+s(p-1)}} \frac{2^{-s(j_0\wedge 0)}}{1-2^{-s}} c_{18} \sum\limits_{m=0}^{\infty}\,2^{m(d+s(p-2))} \!\!\!\!\! \!\! \int\limits_{\Omega_{30\sqrt{d} 2^{-m} }^{\text{ext}} } \int\limits_{B_{c_{17}2^{-m}}(z)} \hspace{-0.5cm}\abs{g(z)-g(y)}^p\widetilde{\mu}_s(\d y) \widetilde{\mu}_s(\d z)\\
				&\le \frac{c_{20}}{s(d+s(p-2))} \int\limits_{ \Omega_{30\sqrt{d}}^{\text{ext}}} \int\limits_{B_{c_{17}}(z)} \frac{\abs{g(z)-g(y)}^p}{((\abs{y-z}+d_z+d_y)\wedge 1)^{d+s(p-2)}}\mu_s(\d y) \mu_s(\d z).
			\end{align*}
			Here the constant is
			\begin{equation*}
				c_{20}:= 2^{2-(j_0\wedge 0)}\,(c_1\wedge 1)^{-d-(p-1)}c_4^p\,c_5 c_{16} (60\sqrt{d} +2c_{17})^{d+\abs{p-2}} c_{19}
			\end{equation*}
			where $c_{19}=c_{19}(d,p,c_{17})>0$ is the constant from \autoref{lem:new_norm_trace}. Combining $(\text{III}_{2,1})$ and $(\text{III}_{2,2})$ yields the desired estimate of $(\text{III}_{2})$. Further, the previous estimates of $(\text{III}_{1})$ and $(\text{III}_{2})$ finish the proof of the bound on $(\text{III})$.
		\end{proof}

\enlargethispage*{6ex}
\begin{proof}[Proof of \autoref{th:trace-extension} and \autoref{th:trace-p-1}]
	The proof of \autoref{th:trace-extension} follows from \autoref{prop:trace_Lp_estimate}, \autoref{prop:trace_seminorm_estimate}, \autoref{lem:estimate_a_{Q,s}}, \autoref{prop:extension_continuity_Lp_part} and \autoref{prop:extension_continuity_seminorm}. The proof of \autoref{th:trace-extension} does not require \autoref{prop:trace_seminorm_estimate}.
\end{proof}

	\section{Nonlocal to Local}\label{sec:convergence_pointwise}
	In this section, we prove the convergence of the trace spaces $\cT^{s,p}(\Omega^c)\to W^{1-1/p,p}(\partial \Omega)$ as $s\to 1^{-}$ as claimed in \autoref{th:asym_trace}.
	
	\medskip
	
	The following lemma is a minor extension of \cite[Lemma 4.1]{GrHe22} to a Lipschitz domains. Note that the term $\frac{1-s}{d_x^s}$ in the measure $\mu_s$ is responsible for the reduction of $\Omega^c$ to $\partial \Omega$. Recall the definition of the sets $\Omega_r^{\text{ext}}, \Omega^r_{\text{ext}}$ in \eqref{eq:def-Omega-decomp} for given $r>0$.
	
	\begin{lemma}\label{lem:weak_convergence_measures}
		Let $\Omega\subset \R^d$ be a Lipschitz domain, $d\ge 2$ and $0<r\le \infty$. We define a family of measures $\bar{\mu}_s$ on $(\R^d, \cB(\R^d))$ via
		\begin{align*}
			\bar{\mu}_s(x):= \frac{1-s}{ d_x^{s}} \1_{\Omega_{r}^{\text{ext}}}(x).
		\end{align*}
		Let $\sigma$ be the surface measure on the Lipschitz submanifold $\partial \Omega$ and set $\sigma(D):= \sigma(\partial \Omega \cap D)$ for all sets $D\in \cB(\R^d)$. Then $\{\bar{\mu}_s\}_s$ converges weakly to $\sigma$ as $s\to 1^{-}$, i.e. when integrated against test functions in $C_c(\R^d)$. 
	\end{lemma}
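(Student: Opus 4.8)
The plan is to localize near $\partial\Omega$ by a partition of unity subordinate to the Lipschitz charts, reduce the assertion to a single chart over the graph of one Lipschitz function, and then combine Rademacher's theorem with a concentration estimate in the normal direction; this follows the scheme of \cite[Lemma~4.1]{GrHe22}, the only new point being a merely Lipschitz boundary. Fix $\varphi\in C_c(\R^d)$ and set $K:=\supp\varphi$. Since $d\ge2$ and $K\cap\partial\Omega$ is compact, it can be covered by finitely many chart balls $B_1,\dots,B_N$ centered on $\partial\Omega$, with $K\setminus\bigcup_jB_j$ compact and disjoint from $\partial\Omega$. Choose $\eta_0,\dots,\eta_N\in C_c^\infty(\R^d)$ with $\sum_{j}\eta_j\equiv1$ on a neighbourhood of $K$, $\supp\eta_j\subset B_j$ for $j\ge1$, and $\dist(\supp\eta_0,\partial\Omega)=:\varepsilon_0>0$. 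Then $\bigl|\int\varphi\eta_0\,\d\bar\mu_s\bigr|\le\|\varphi\|_\infty\,|\supp(\varphi\eta_0)|\,(1-s)\varepsilon_0^{-s}\to0=\int_{\partial\Omega}\varphi\eta_0\,\d\sigma$, so it suffices to prove $\int\varphi\eta_j\,\d\bar\mu_s\to\int_{\partial\Omega}\varphi\eta_j\,\d\sigma$ for each $j\ge1$.

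After applying the rigid motion $T_{z_j}$ — which preserves Lebesgue measure and Euclidean distances, hence also $\bar\mu_s$ and $\sigma$ — we may assume that $\Omega$ agrees near $0$ with $\{(x',x_d):x_d<\phi(x')\}$ for a Lipschitz $\phi:\R^{d-1}\to\R$ with constant $L$ and $\phi(0)=0$, that $\psi:=\varphi\eta_j$ is supported in a small ball contained in the chart, and (shrinking the chart radius in terms of $L$ as in \autoref{sec:prelims}) that the point realizing $d_x$ for $x$ in that ball lies on the graph of $\phi$. Writing $x_d=\phi(x')+u$ and applying Tonelli,
\[
\int_{\R^d}\psi\,\d\bar\mu_s=\int_{\R^{d-1}}I_s(x')\,\d x',\qquad I_s(x'):=\int_0^\infty\psi\bigl(x',\phi(x')+u\bigr)\,\frac{1-s}{d_{(x',\phi(x')+u)}^{\,s}}\,\1_{\{d_{(x',\phi(x')+u)}<r\}}\,\d u .
\]
Since $(x',\phi(x'))\in\partial\Omega$ we have $d_{(x',\phi(x')+u)}\le u$, and an estimate of the type \eqref{eq:distance_estimate} gives $d_{(x',\phi(x')+u)}\ge c_L u$ with $c_L:=(\sqrt2(1+L))^{-1}$; as $\supp\psi$ is compact in $u$, this yields $|I_s(x')|\le C_0\,\1_{K'}(x')$ for all $s$ close to $1$, with $K'\subset\R^{d-1}$ bounded, an integrable majorant for the outer integral.

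It remains to identify $\lim_{s\to1^-}I_s(x')$ for a.e.\ $x'$. Fix a point of differentiability $x'$ of $\phi$ (a.e.\ $x'$, by Rademacher) and put $\ell:=\sqrt{1+|\nabla\phi(x')|^2}$. The geometric heart of the argument is
\[
d_{(x',\phi(x')+u)}=\frac{u}{\ell}+o(u)\qquad(u\to0^+) ,
\]
which follows by comparison with the tangent hyperplane $\Pi$ of the graph at $Z:=(x',\phi(x'))$: the distance of $P:=(x',\phi(x')+u)$ to $\Pi$ is exactly $u/\ell$, and since $d_P\le u$ the nearest graph point to $P$ and the nearest point of $\Pi$ to $P$ both lie within $2u$ of $Z$, a scale on which the graph differs from $\Pi$ by $o(u)$ by differentiability (the slope being bounded by $L$), so $|d_P-\dist(P,\Pi)|=o(u)$. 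Granting this, fix $\eta>0$ and pick $\delta>0$ such that for $0<u\le\delta$ one has $|\psi(x',\phi(x')+u)-\psi(x',\phi(x'))|<\eta$, $|\ell\,d_{(x',\phi(x')+u)}/u-1|<\eta$ and $d_{(x',\phi(x')+u)}<r$. The contribution to $I_s(x')$ from $u>\delta$ is $O\bigl((1-s)(c_L\delta)^{-s}\bigr)\to0$, while the contribution from $(0,\delta)$ equals $\bigl(\psi(x',\phi(x'))+O(\eta)\bigr)\bigl(1+O(\eta)\bigr)\,\ell^{\,s}\!\int_0^\delta(1-s)u^{-s}\,\d u=\bigl(\psi(x',\phi(x'))+O(\eta)\bigr)\bigl(1+O(\eta)\bigr)\,\ell^{\,s}\delta^{\,1-s}$, which converges as $s\to1^-$ to $\psi(x',\phi(x'))\ell$ up to an error $O(\eta)$, using $\ell^{\,s}\delta^{\,1-s}\to\ell$. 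Letting $\eta\downarrow0$ gives $I_s(x')\to\psi(x',\phi(x'))\ell$. Dominated convergence now yields $\int\psi\,\d\bar\mu_s\to\int_{\R^{d-1}}\psi(x',\phi(x'))\sqrt{1+|\nabla\phi(x')|^2}\,\d x'=\int_{\partial\Omega}\varphi\eta_j\,\d\sigma$, and summing over $j$ completes the proof.

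The principal obstacle is precisely the geometric identity $d_{(x',\phi(x')+u)}=u/\ell+o(u)$ in the Lipschitz setting: in the $C^{1,1}$ case treated in \cite{GrHe22} the distance function is itself regular up to $\partial\Omega$, whereas here one has only a.e.\ differentiability of $\phi$ and must, in addition, ensure that the nearest boundary point does not leave the local chart, which is handled by choosing the chart radius small depending on $L$ as in \autoref{sec:prelims}.
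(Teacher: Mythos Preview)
Your proof is correct and follows essentially the same approach as the paper: localize to Lipschitz charts, establish the geometric identity $d_{(x',\phi(x')+u)}=u/\sqrt{1+|\nabla\phi(x')|^2}+o(u)$ at points of differentiability of $\phi$ via Rademacher's theorem and comparison with the tangent hyperplane, and then combine this with the concentration of $(1-s)u^{-s}\,\d u$ at $u=0$ and dominated convergence. The paper's proof is organized slightly differently (it phrases the geometric identity as $\tfrac{x_d-\phi(x')}{d_{(x',x_d)}}\to\sqrt{1+|\nabla\phi(x')|^2}$ and computes the minimizer on the tangent plane explicitly), but the substance is the same.
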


\begin{remark*}{\ }
	\begin{enumerate} 
		\item In dimension $d=1$ the previous convergence result reads
\begin{equation*}
\overline{\mu}_s\to \sum\limits_{x_0\in \partial \Omega} \delta_{x_0} \text{ weakly ,}
\end{equation*}
i.e. when tested against $C_c(\R)$ functions. Here $\delta_{x_0}$ is the Dirac measure in the boundary point $x_0\in \partial \Omega$. 
 \item In \cite[Lemma 4.1]{GrHe22} the first author and his coauthor proved \autoref{lem:weak_convergence_measures} for bounded $C^1$-domains. Below, we adopt this proof and explain necessary differences to \cite[Lemma 4.1]{GrHe22}. 
\end{enumerate}

\end{remark*}

	\begin{proof}
		Let $f\in C_c(\R^d)$. We have shown in \cite[Lemma 4.1]{GrHe22} without using that the boundary $\partial \Omega$ is $C^1$ that $\int_{\Omega_{\text{ext}}^{\eps} }\abs{f}\overline{\mu}_s\to 0$ as $s\to 1^{-}$ for any $\eps>0$. Thus, the problem localizes. Without loss of generality we find a cube $Q= (-\rho,\rho)^d$ and a Lipschitz continuous map $\phi:\R^{d-1}\to \R$ such that $\Omega \cap Q = \{ (x',x_d)| x_d<\phi(x') \}\cap Q$. Furthermore, on this cube we have
		\begin{align*}
			\int\limits_{Q\cap \Omega^c} f \d \overline{\mu}_s = \int\limits_{(-\rho,\rho)^{d-1}} \int\limits_{\phi(x')}^\rho f(x',x_d) \frac{1-s}{d_{(x',x_d)}^s} \d x_d \d x'.
		\end{align*}
		Since $\frac{1-s}{x_d-\phi(x')}$ is an approximate identity evaluating at $x_d= \phi(x')$ as $s\to 1^{-}$, it remains to show that 
		\begin{equation}\label{eq:convergence_measure_lts}
			\frac{x_d-\phi(x')}{d_{(x',x_d)}}\to \sqrt{1+\abs{\nabla \phi(x')}^2 } \qquad \text{ as }x_d\to \phi(x')
		\end{equation}
		for almost every $x'\in (-\rho,\rho)^{d-1}$. Since $\phi$ is Lipschitz continuous, it is differentiable at almost every point $x'\in (-\rho, \rho)^{d-1}$ by Rademacher's theorem. In \cite[Lemma 4.1]{GrHe22} we used continuous differentiability of $\phi$ to show \eqref{eq:convergence_measure_lts}. Here we only assume $\phi$ to be Lipschitz continuous. We fix $x'\in(-\rho,\rho)^{d-1}$ such that $\phi$ is differentiable at $x'$. For $x_d> \phi(x')$ such that $x_d< \rho$ we pick a minimizer of the distance of $x=(x',x_d)$ to the surface $\partial \Omega$ and call it $y=y(x_d)=(y',\phi(y'))$. Analogous to the estimate \eqref{eq:distance_estimate} we have
		\begin{align}\label{eq:upper_bound_distance_to_vertical_distance}
			\abs{x_d-\phi(x')}&\le 2 \big(1+[\phi]_{C^{0,1}}\big)d_x
		\end{align}
		Furthermore, we define the hyperplane $P:=\{ (z',\phi(x')+(z'-x')\cdot\nabla \phi(x'))\,|\, z'\in (-\rho,\rho)^{d-1} \}$ which is tangential to the surface at $x$. Let $z=z(x_d)=(z',z_d)$ be the minimizer of $x=x(x',x_d)$ to the plane $P$. A small calculation yields 
		\begin{gather*}
			z'=x'+\frac{x_d-\phi(x')}{1+\abs{\phi(x')}^2} \nabla\phi(x'), \quad z_d= \phi(x')+ \frac{\abs{\nabla \phi(x')}^2}{1+\abs{\nabla\phi(x')}^2} (x_d-\phi(x')),\\
			\distw{x}{P}= \abs{x-z}= \frac{x_d-\phi(x')}{\sqrt{1+\abs{\nabla \phi(x')}^2  }}.
		\end{gather*}
		Since $\phi$ is differentiable, the error function $r:(-\rho,\rho)^{d-1}\to \R$ given by $r(w'):= \phi(w')-\phi(x')-(w'-x')\cdot \nabla \phi(x')$ satisfies 
		\begin{equation*}
			\frac{r(w')}{\abs{w'-x'}}\to 0 \quad \text{ as }w'\to x'.
		\end{equation*}
		Since $\abs{y'-x'}\le d_x \le \abs{x_d-\phi(x')}$, we know 
		\begin{equation}\label{eq:convergence_measure_remainder}
			\frac{r(y')}{\abs{x_d-\phi(x')}}\to 0 \quad \text{ as }x_d\to \phi(x').
		\end{equation}
		Now, we will estimate $\distw{x}{P}$ by $d_x$ and an error and vice versa. Let $y_d\in \R$ such that $(y',y_d) \in P$. By triangle inequality, we have \begin{equation*}
			\abs{(y',y_d)- (x',x_d)}\le \abs{(y',\phi(y'))-(x',x_d) }+ \abs{(y',y_d)-(y',
				\phi(y')} = d_x + r(y').
		\end{equation*}
		Since $(y',y_d)\in P$ and $z$ is the minimizer of the distance of $x$ to $P$, we have $\distw{x}{P}\le d_x+ r(y')$. Again by triangle inequality 
		\begin{equation*}
			d_x\le \abs{(z',\phi(z')) - (x',x_d) }\le \abs{(z',z_d)- (x',x_d)}+ \abs{(z',\phi(z')) - (z',z_d) }= \distw{x}{P} + r(z').
		\end{equation*}
		Therefore, 
		\begin{align*}
			\abs{1 - \frac{d_x}{\distw{x}{P}}}&= \frac{\abs{\distw{x}{P}- d_x}}{\distw{x}{P}} \le \frac{\max \{\abs{r(y')}, \abs{r(z')}\}}{\distw{x}{P}}
			\\
			&= \sqrt{1+\abs{\nabla \phi(x')}^2} \frac{\max \{\abs{r(y')}, \abs{r(z')}\}}{\abs{x_d-\phi(x')}} \to 0 
		\end{align*}
		as $x_d\to \phi(x')$ by \eqref{eq:convergence_measure_remainder}, the choice of $z'$ and the properties of the error function $r$. By the previous calculation of $\distw{x}{P}$ and this convergence, \eqref{eq:convergence_measure_lts} follows. Since $(1-s)/\abs{x_d-\phi(x')}^{s}$ is an approximate identity, we have for any $x'\in (-\rho,\rho)^{d-1}$ such that $\phi$ is differentiable at $x'$
		\begin{align*}
			\int\limits_{\phi(x')}^{\rho} \frac{1-s}{d_{(x',x_d)}^s}f(x',x_d)\d x_d \to f(x',0)\sqrt{1+\abs{\nabla \phi(x')}^2  } \quad \text{ as } s\to 1^{-}.
		\end{align*}
		Since $f$ has compact support, there exists $R>0$ such that $\supp(f)\subset B_R(0)$. By \eqref{eq:upper_bound_distance_to_vertical_distance}, we have
		\begin{align*}
			\bigg|	\int\limits_{\phi(x')}^{\rho} \frac{1-s}{d_{(x',x_d)}^s}f(x',x_d)\d x_d\bigg|&\le \norm{f}_{L^\infty}\1_{B_R(0)}(x')\, \int\limits_{\phi(x')}^{\rho} \frac{1-s}{d_{(x',x_d)}^s}\d x_d\\
			&\le 2(1+[\phi]_{C^{0,1}})  \norm{f}_{L^\infty}\1_{B_R(0)}(x')\, \int\limits_{\phi(x')}^{\rho} \frac{1-s}{\abs{x_d-\phi(x')}^s }\d x_d\\
			&\le 2(1+[\phi]_{C^{0,1}}) (\rho+1) \norm{f}_{L^\infty}\1_{B_R(0)}(x').
		\end{align*}
		By dominated convergence, 
		\begin{equation*}
			\int\limits_{Q\cap \Omega^c} f \d \overline{\mu}_s \to \int\limits_{\partial \Omega \cap Q} f \d \sigma \quad \text{ as }s \to 1^{-}.
		\end{equation*}
		 Following the proof of \cite[Lemma 4.1]{GrHe22}, the result follows. 
	\end{proof}

	We are now in the position to prove the convergence theorem. We use similar arguments as in the proof of \cite[Proposition 4.2, Theorem 1.4]{GrHe22}.
	
	\medskip
	
	\begin{proof}[Proof of \autoref{th:asym_trace}] First, we prove the convergence result for $u\in C_c^{0,1}(\R^d)$. Then we apply a density argument
	
	\textbf{Step 1: } Let us assume $u\in C_c^{0,1}(\R^d)$. 
	
	\medskip
	
	\textbf{$L^p$-part: }We split $\Omega^c= \Omega_1^{\text{ext}}\cup \Omega^1_{\text{ext}}$. On $\Omega^1_{\text{ext}}$ we apply the estimates from the proof of the trace continuity. By \eqref{eq:trace_estimate_Lp_Omega^1}, there exists a constant $c_1=c_1(d,\Omega, p)>0$ such that
	\begin{equation*}
		\lim\limits_{s\to 1^{-}} \norm{\trns u}_{L^{p}(\Omega^1_{\text{ext}};\,\mu_s )}^p \le c_1 \lim\limits_{s\to 1^{-}} [u]_{\vs{s}{p}(\Omega\,|\, \Omega^1_{\text{ext}})}^p. 
	\end{equation*}
	Notice that 
	\begin{align}\label{eq:convergence_Wsp_Omega_Omega^1}
		[u]_{\vs{s}{p}(\Omega\,|\, \Omega^1_{\text{ext}})}^p&\le (1-s) \int\limits_{B_1(0)^c}\int\limits_{\Omega} \frac{\abs{u(y)-u(y+h)}^p}{\abs{h}^{d+sp}} \d y \d h \nonumber\\
		&\le 2^p\, (1-s) \norm{u}_{L^p(\R^d)}^p \int\limits_{B_1(0)^c} \frac{1}{\abs{h}^{d+sp}}  \d h= \tfrac{2^p\, \omega_{d-1} }{p}(1-s) \norm{u}_{L^p(\R^d)}
	\end{align}
and, thus, $\lim\limits_{s\to 1^{-}} \norm{\trns u}_{L^{p}(\Omega^1_{\text{ext}};\,\mu_s)}^p=0$. On $\Omega_1^{\text{ext}}$ we consider the family of measures $\{ \bar{\mu}_s \}$ from \autoref{lem:weak_convergence_measures} with $r=1$. This family converges weakly to the surface measure on $\partial \Omega$, which we denote by $\sigma$. Thus, we conclude the claim via
\begin{equation*}
	\norm{\trns u}_{L^{p}(\Omega_1^{\text{ext}};\,\mu_s)}^p= \int\limits_{\R^d} \frac{\abs{u(x)}^p}{(1+d_x)^{d+s(p-1)}}\bar{\mu}_s(\d x) \le \int\limits_{\R^d} \abs{u(x)}^p\bar{\mu}_s(\d x) \to \int\limits_{\partial \Omega} \abs{u(x)}^p \d \sigma(x)
\end{equation*}
and similarly
\begin{align*}
	\norm{\trns u}_{L^{p}(\Omega_1^{\text{ext}};\,\mu_s)}^p&\ge \int\limits_{\R^d} \frac{\abs{u(x)}^p}{(1+d_x)^{d+p-1}}\bar{\mu}_s(\d x) \to \int\limits_{\partial \Omega} \abs{u(x)}^p \d \sigma(x) \text{ as $s\to 1^{-}$}
\end{align*} 
because $x\mapsto (1+d_x)^{-d-p+1}$ is continuous.

\medskip

\textbf{Seminorm-part: } The main task is to show

\begin{align*}
\iint\limits_{\Omega^c \times \Omega^c}  \frac{\abs{u(x)-u(y)}^{p} (1+d_x)^{-d-s(p-1)} (1+d_y)^{-d-s(p-1)} }{ ((\abs{x-y}+d_x+d_y)\wedge 1)^{d+s(p-2)}} \d (\bar{\mu}_s \otimes \bar{\mu}_s)((x,y)) \\
\longrightarrow \quad \iint\limits_{\partial \Omega\times \partial \Omega} \frac{\abs{u(x)-u(y)}^p}{\abs{x-y}^{d+p-2}} \d (\sigma\otimes \sigma )(x,y) \quad \text{ as } s \to 1^{-}
\end{align*}
for some $r>0$ in the definition of $\bar{\mu}_s$. The choice of $r$ is arbitrary and will be made later. Let $\bar{\mu}_s$ be the measure from \autoref{lem:weak_convergence_measures} to the parameter $r$. As in the proof of \autoref{prop:trace_seminorm_estimate} we split $\Omega^c\times \Omega^c = \Omega_{r}^{\text{ext}}\times\Omega_{r}^{\text{ext}}\cup \Omega^c\times\Omega^{r}_{\text{ext}}\cup \Omega^{r}_{\text{ext}}\times \Omega^c$. The proof in the case $\Omega^c\times\Omega^{r}_{\text{ext}}$ equals the one in the case $\Omega^{r}_{\text{ext}}\times \Omega^c$ and shows that $[\trns u]_{\cT^{s,p}(\Omega^{r}_{\text{ext}}\,|\,\Omega^c)}^p$ converges to zero. By \eqref{eq:trace_estimate_Omega^1_Omega^c_prev}, \eqref{eq:convergence_Wsp_Omega_Omega^1} and a calculation similar to \eqref{eq:trace_estimate_Omega^1_Omega^c_prev_2} we find a constant $c_2=c_2(\Omega,p)>0$ such that 
\begin{align*}
[\trns u]_{\cT^{s,p}(\Omega^{r}_{\text{ext}}\,|\,\Omega^c)}^p &\le 2^p \Big( (1-s)\norm{\trns u}_{L^{p}(\Omega^c;\,\mu_s)}^p \int\limits_{\Omega^{r}_{\text{ext}}} d_x^{-d-sp}\d x +  \norm{\trns u}_{L^{p}(\Omega^{r}_{\text{ext}};\,\mu_s)}^p \mu_s(\Omega^c)   \Big)\\
	&\le  2^p \Big( \frac{(1-s)c_2r^{-sp}}{s}\norm{\trns u}_{L^{p}(\Omega^c;\,\mu_s)}^p +  \frac{c_2}{s}\norm{\trns u}_{L^{p}(\Omega^{r}_{\text{ext}};\,\mu_s)}^p    \Big)\to 0 \text{ as }s\to 1^{-}.
\end{align*}
Now, we consider the interesting part $\Omega_{r}^{\text{ext}}\times \Omega_{r}^{\text{ext}}$. We would like to apply \autoref{lem:weak_convergence_measures} to the function
\begin{equation*}
	h_s(x,y):= \frac{\abs{u(x)-u(y)}^{p} (1+d_x)^{-d-s(p-1)} (1+d_y)^{-d-s(p-1)} }{ ((\abs{x-y}+d_x+d_y)\wedge 1)^{d+s(p-2)}}
\end{equation*}
since
\begin{equation*}
[\trns u]_{\cT^{s,p}(\Omega_{r}^{\text{ext}}\,|\, \Omega_{r}^{\text{ext}})}^p = \iint\limits_{\Omega_{r}^{\text{ext}}\times \Omega_{r}^{\text{ext}}}  h_s(x,y) \d (\bar{\mu}_s \otimes \bar{\mu}_s)((x,y)) 
\end{equation*}
and $h_s(x,y)\to \abs{u(w)-u(z)}^p/ \abs{w-z}^{d+p-2}$ for $x\to w\in \partial \Omega$, $y\to z\in \partial \Omega$, $s \to 1^{-}$ for $w\ne z$. \autoref{lem:weak_convergence_measures} cannot be applied directly because $h_s$ is neither continuous on $\Omega^c\times \Omega^c$ nor independent of $s$. We resolve this issue by arguments similar to the ones used in \cite[Proposition 4.2]{GrHe22}. Let us fix a radial bump function $\varphi\in C_c^\infty(\overline{B_2(0)})$, $0\le \varphi\le 1$, $\varphi=1$ in $B_{1}$ and define $\varphi_\eps(x,y):= \varphi(\abs{x-y}/\eps)$ for $\eps\in(0,1)$. We set 
\begin{align*}
	a_s^\star&:=\begin{cases}
	1 & p\ge 2\\
	\eps^{-(1-s)(2-p)}& 1\le p<2
	\end{cases},\quad
	a_{s,\star}:=\begin{cases}
	\eps^{(1-s)(p-2)} & p\ge 2\\
	1& 1\le p<2
	\end{cases},\\
	h_{\eps}^\star(x,y)&:= \frac{\abs{u(x)-u(y)}^p}{(\abs{x-y}\wedge 1)^{d+p-2}}\, (1-\varphi_\eps(x,y)),\\
	h_{\eps,\star}(x,y)&:=\frac{\abs{u(x)-u(y)}^p((1+d_x)(1+d_y))^{-d-p+1}}{((\abs{x-y}+d_x+d_y)\wedge 1)^{d+p-2}}\, (1-\varphi_\eps(x,y)),\\
	g_{s,\eps} (x,y)&:= h_s(x,y)\varphi_\eps(x,y).\\
\end{align*}
For every $s \in (0,1)$ and every $\eps > 0$ we have $a_{s,\star}\,h_{\eps, \star}+ g_{s,\eps}\le h_s\le a_{s}^\star\,h_{\eps}^\star+ g_{s,\eps}$. We will now consider the limit $s \to 1^{-}$ and subsequently $\eps \to 0^{+}$ of the upper and lower bound in this inequality integrated against $\bar{\mu}_s \otimes  \bar{\mu}_s$. Both, $h_{\eps,\star}$ and $h_{\eps}^\star$ are continuous and bounded on $\R^d\times \R^d$. By \autoref{lem:weak_convergence_measures}, $\{\mu_s\}_s$ converges weakly to $\sigma$ and, thus, the sequence of product measures $\{ \mu_s\otimes \mu_s \}$ converges weakly to $\sigma\otimes \sigma$. Therefore, 
\begin{align*}
	\iint h_{\eps,\star}(x,y)\d (\bar{\mu}_s\otimes \bar{\mu}_s)((x,y)) &\to\!\! \iint\limits_{\partial \Omega\times \partial \Omega} \frac{\abs{u(x)-u(y)}^p}{\abs{x-y}^{d+p-2}}(1-\varphi_\eps(x,y)) \d (\sigma\otimes \sigma )(x,y) \!\quad \text{ as }s\to 1^{-},\\
	&\to \!\!\iint\limits_{\partial \Omega\times \partial \Omega} \frac{\abs{u(x)-u(y)}^p}{\abs{x-y}^{d+p-2}} \d (\sigma\otimes \sigma )(x,y) \quad \text{ as } \eps\to 0+.
\end{align*}
The same is true for $h_\eps^\star$. Furthermore, $a_s^\star, a_{s,\star}\to 1^{-}$. Now, we show that $\lim_{s\to 1^{-}}\iint g_{s,\eps} \d (\bar{\mu}_s\otimes \bar{\mu}_s)(x,y)\to 0$ as $\eps\to 0+$. Note
\begin{align*}
	\iint g_{s,\eps}(x,y) \d (\bar{\mu}_s\otimes \bar{\mu}_s)(x,y)\le [u]_{C^{0,1}} \iint\limits_{ \substack{\Omega_{r}^{\text{ext}}\times \Omega_{r}^{\text{ext}}\\ \abs{x-y}<2\eps  }} \abs{x-y}^{-d-s(p-2)+2}\d  (\bar{\mu}_s\otimes \bar{\mu}_s)(x,y) \,.
\end{align*}
The problem localizes. Since $\Omega$ is a bounded Lipschitz domain, we find a uniform localization radius $r_0>0$. Let $\cQ$ be a cover of $\partial \Omega$ with balls $B\in \cQ$ with radius $r_0>0$ such that the union of these balls with half their radii still contains $\partial \Omega$. Now, we fix $r>0$ from the beginning of the proof such that $\bigcup_{B\in \cQ} \frac{1}{2} B \supset \Omega_{r}^{\text{ext}}$ where $\frac{1}{2}B$ is the ball with half the radius. We assume $\eps<r_0/4$ such that for any $x,y\in \Omega_{r}^{\text{ext}}$ satisfying $x\in 1/2 B$, $B\in \cQ$, $\abs{x-y}<2\eps$ then $y \in B$. Thus, we only need to consider one ball $B\in \cQ$. After translation we assume with loss of generality $B=B_{r_0}(0)$. We flatten the boundary $\partial \Omega$ that lies in $B$ with the function $\phi\in C^{0,1}(\R^{d-1})$ such that $\Omega\cap B = \{ (x',x_d)\in B \,|\, x_d<\phi(x') \}$. Since $\Omega$ has a uniform Lipschitz boundary, the Lipschitz constant of $\phi$ does not depend on the position of $B$. For any $x\in B\cap \Omega^c$, $d_x\ge (1+\norm{\phi}_{C^{0,1}})^{-1}\abs{x_d-\phi(x')}$. Therefore, 
\begin{align*}
	&\iint\limits_{ (\Omega^c\cap Q)\times (\Omega^c \cap Q)}\1_{B_{2\eps}(x)}(y) \abs{x-y}^{-d-s(p-2)+2p}\d  (\bar{\mu}_s\otimes \bar{\mu}_s)(x,y)\\
	&\le (1+\norm{\phi}_{C^{0,1}})^2 \int\limits_{B^{(d-1)}_{r_0}(0)}\int\limits_{\phi(x')}^{2r_0} \int\limits_{B^{(d-1)}_{2\eps}(x')}\int\limits_{\phi(y')}^{2r_0}  \frac{(1-s)^2 \abs{x'-y'}^{-d-s(p-2)+2p}}{(x_d-\phi(x'))^s(y_d-\phi(y'))^s}\d y_d \d y' \d x_d \d x'\\
	&\le r_0^{2-2s} (1+\norm{\phi}_{C^{0,1}})^2 \frac{\omega_{d-2}^2}{d-1}r_0^{d-1} \int\limits_{0}^{2\eps} t^{(1-s)(p-2)}\d t \\
	&= r_0^{2-2s} (1+\norm{\phi}_{C^{0,1}})^2 \frac{\omega_{d-2}^2}{d-1}r_0^{d-1} \frac{(2\eps)^{(1-s)(p-2)+1}}{(1-s)(p-2)+1} \\
	&\quad \to  \quad (1+\norm{\phi}_{C^{0,1}})^2 \frac{\omega_{d-2}^2}{d-1}r_0^{d-1} 2\eps \quad \to \quad 0.
\end{align*}
In the last line, we first consider the limit $s\to 1^{-}$ and then the limit $\eps\to 0+$. Thus, 
 \begin{equation*}
 	\iint g_{s,\eps}(x,y) \d (\bar{\mu}_s\otimes \bar{\mu}_s)(x,y)\le \sum_{B\in \cQ} \,\iint\limits_{  Q\times  Q}\1_{B_{2\eps}(x)}(y) \abs{x-y}^{-d-s(p-2)+2p}\d  (\bar{\mu}_s\otimes \bar{\mu}_s)(x,y)\to 0. 
 \end{equation*}
 The result for $C_c^{0,1}(\R^d)$ functions follows from $a_{s,\star}\,h_{\eps, \star}+ g_{s,\eps}\le h_s\le a_{s}^\star\,h_{\eps}^\star+ g_{s,\eps}$. The proof of $[u]_{\cT^{s,1}(\Omega^c)}\to [u]_{B_{1,1}^0(\partial \Omega)}$ follows analogous

\textbf{Step 2: } Let $1<p<\infty$. We generalize the result from step 1 to all functions in $W^{1,p}(\R^d)$ via a density argument. Firstly, there exists a constant $c_3=c_3(d,p)>0$ such that $\norm{u}_{\VspOm}\le c_3 \norm{u}_{W^{s,p}(\R^d)}$. Combining this estimate with \eqref{eq:trace_Lp_continuity} and \eqref{eq:trace_continuity} yields 
\begin{align*}
	\norm{\trns u}_{\cT^{s,p}(\Omega^c)}\le c_4\,\norm{u}_{\VspOm}\le c_3 c_4 \norm{u}_{W^{1,p}(\R^d)}.
\end{align*}
Here $c_4>0$ is the sum of the constants from \autoref{prop:trace_Lp_estimate} and \autoref{prop:trace_seminorm_estimate}. Now take any $u\in W^{1,p}(\R^d)$. Since $C_c^{0,1}(\R^d)$ is dense in $W^{1,p}(\R^d)$ we find a sequence $u_n\in C_c^{0,1}(\R^d)$ such that $\norm{u-u_n}_{W^{1,p}(\R^d)}\to 0$ as $n\to \infty$. Since the classical trace is continuous the mapping \begin{equation*}
	\gamma_0: W^{1,p}(\R^d) \overset{\text{cts.}}{\hookrightarrow} W^{1,p}(\Omega) \overset{\gamma}{\to} W^{1-1/p,p}(\partial \Omega)
\end{equation*} 
is linear and continuous. Thus, uniformly in $s\in(0,1)$ 
\begin{align*}
	\norm{\trns u- \trns u_n}_{\cT^{s,p}(\Omega^c)}&\le c_1 \norm{u-u_n}_{W^{1,p}(\R^d)}\to 0,\\
	\norm{\gamma_0 u - \gamma_0 u_n}_{W^{1-1/p,p}(\partial \Omega)}&\le C_3 \norm{u-u_n}_{W^{1,p}(\R^d)} \to 0 \text{ as }n \to \infty.
\end{align*}
Finally, step 1 yields $\norm{u_n}_{\cT^{s,p}(\Omega^c)}\to \norm{u_n}_{W^{1-1/p,p}(\partial \Omega)}$ as $s\to 1^{-}$. The proof of the statement for $d=1$ follows with minor changes and we omit it. Notice in this case that functions in $W^{s,p}(\Omega)$ for $s>1/p$ are continuous by Morrey's inequality. Lastly, the proof of $\norm{\trns u}_{L^1(\Omega;\, \mu_s)}\to \norm{\gamma u}_{L^1(\partial \Omega)}$ follows analogously to the proof in step 2 using the uniform trace embedding \autoref{prop:trace_Lp_estimate}. 

\end{proof}

\appendix
\section{Reflected Whitney cubes}\label{sec:whitney}
	The following results are taken from \cite[Section 3.2]{DyKa19}. Throughout this section we fix a Lipschitz domain $\Omega\subset \R^d$. We fix a Whitney decomposition $\cW(\R^d\setminus\overline{\Omega})$ of the open set with Lipschitz boundary $\R^d\setminus \overline{\Omega}$, \ie each cube $Q\in \cW(\R^d\setminus\overline{\Omega})$ satisfies $\diam{Q}\le \distw{Q}{\partial \Omega}\le 4 \diam{Q}$. We denote the center of a Whitney cube $Q\in \cW(\R^d\setminus\overline{\Omega})$ by $q_Q\in Q$. These cubes satisfy 
	\begin{equation}\label{eq:prop_cubes_1}
	\sum_{Q\in \cW(\R^d\setminus\overline{\Omega})}\1_{Q}= \1_{\R^d\setminus\overline{\Omega}}.
	\end{equation}
	Bounded Lipschitz domains are both interior and exterior thick, see \cite[Definition 14, 15]{DyKa19}. Thereby, we find a constant $M> 1$ and a reflected Whitney cube $\widetilde{Q}\subset \Omega$ for any $Q\in \cW(\R^d\setminus\overline{\Omega})$ such that $\diam{Q}\le \inr{\Omega}=\sup\{ r\,|\, B_r\subset \Omega \}$ satisfying 
	\begin{align}
	\diam{\widetilde{Q}}\le \distw{\widetilde{Q}}{\partial \Omega}\le 4 \diam{\widetilde{Q}},\nonumber\\
	M^{-1}\diam{Q}\le \diam{\widetilde{Q}}\le M \diam{Q},\label{eq:reflected_cubes_comparable}\\
	\distw{Q}{\widetilde{Q}}\le M\distw{Q}{\partial \Omega}.\nonumber
	\end{align}
	Again we denote the centers of the reflected cubes by $q_{\widetilde{Q}}\in \widetilde{Q}$. The collection of these reflected cubes cover the domain $\Omega$, \ie
	\begin{equation}\label{eq:prop1_reflec_cubes}
	\bigcup\limits_{\substack{Q\in \cW(\R^d\setminus\overline{\Omega})\\ \diam{Q}\le \inr{\Omega}}} \widetilde{Q}= \Omega.
	\end{equation}
	Additionally, the reflected cubes satisfy the bounded overlap property, \ie there exists a constant $N\ge 1$ such that 
	\begin{equation}\label{eq:bounded_overlap}
	\sum\limits_{\substack{Q\in \cW(\R^d\setminus\overline{\Omega})\\ \diam{Q}\le \inr{\Omega}}} \1_{\widetilde{Q}}\le N \1_{\Omega},
	\end{equation}
	see \cite[Remark 19]{DyKa19}. We define for short $\cW_{\inr{\Omega}}(\R^d\setminus \Omega):= \{ Q\in \cW(\R^d\setminus \Omega)\,|\, \diam{Q}\le \inr{\Omega} \}$.

	\section{Hardy inequality for the half space}
	The following Hardy inequality for the half space is proven in \cite{FrSe10} and \cite{BoDy11} in the case $p=2$. See \cite{DyKi22} for a corresponding weighted Hardy inequality. Note that the constant $\cD_{s,p}$ is optimal.
	\begin{theorem}[{\cite[Theorem 1.1]{FrSe10}, \cite[Theorem 1]{DyKi22}}]\label{th:hardy_half_space}
		Let $0<s<1$, $d\in \N$, $p\in [1,\infty)$ with $ps\ne 1$. Then 
		\begin{equation}
		\cD_{s,p}\int\limits_{\R_+^d} \frac{\abs{u(x)}^p}{x_d^{sp}}\d x \le \int\limits_{\R_+^d\times \R_+^d} \frac{\abs{u(x)-u(y)}^p}{\abs{x-y}^{d+sp}}\d (x,y)
		\end{equation}
		for any $u\in W_0^{s,p}(\R^d_+)= \overline{C_c^\infty(\R_+^d)}^{\norm{\cdot}_{W^{s,p}}}$. The constant is given by
		\begin{equation}\label{eq;constant}
		\cD_{s,p}:=2\pi^{(d-1)/2}\, \frac{\Gamma(\frac{1+sp}{2})}{\Gamma(\frac{d+sp}{2})} \, \int\limits_0^1 \frac{\abs{1-t^{(ps-1)/p}}^p}{(1-t)^{1+ps}}\d t.
		\end{equation}
		Furthermore, in the case $p=1$ and $d=1$ the inequality only holds for functions that are proportional to a non-increasing function.
	\end{theorem}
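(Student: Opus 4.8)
The plan is to prove the inequality by the nonlinear ground state representation method of Frank and Seiringer, using the explicit ``virtual ground state''
$\omega(x):=x_d^{(sp-1)/p}$
on $\R_+^d$. This $\omega$ is the (distributional) solution of the Euler--Lagrange equation attached to the Rayleigh quotient in the theorem, and the hypothesis $sp\neq1$ is precisely what makes $\omega$ non-constant and keeps the associated principal value finite as $x_d\to0$. By density it suffices to establish the inequality for $u\in C_c^\infty(\R_+^d)$, and then a limiting argument along $W_0^{s,p}(\R^d_+)=\overline{C_c^\infty(\R^d_+)}$ gives the general case.

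The first step is to show that for every $x\in\R_+^d$
\begin{equation*}
	L_p\omega(x):=\pv\int\limits_{\R_+^d}\frac{|\omega(x)-\omega(y)|^{p-2}\bigl(\omega(x)-\omega(y)\bigr)}{|x-y|^{d+sp}}\,\d y=c_0\,\omega(x)^{p-1}\,x_d^{-sp}
\end{equation*}
for a finite constant $c_0$ independent of $x$, and to identify $c_0$ with a universal multiple of $\cD_{s,p}$. After the substitution $y=x+x_d z$ (so $y'=x'+x_dz'$ and $y_d=x_d(1+z_d)$, $z_d>-1$) the variable $x_d$ scales out of the entire expression, leaving a fixed principal-value integral in $z$; carrying out the transverse integration over $z'\in\R^{d-1}$ first yields the factor
\begin{equation*}
	\int\limits_{\R^{d-1}}\frac{\d w}{(1+|w|^2)^{(d+sp)/2}}=\pi^{(d-1)/2}\,\frac{\Gamma(\tfrac{1+sp}{2})}{\Gamma(\tfrac{d+sp}{2})},
\end{equation*}
and the remaining one-dimensional principal value, after the reflection $r\mapsto1/r$ that folds $(1,\infty)$ onto $(0,1)$, collapses to $\int_0^1|1-t^{(sp-1)/p}|^p(1-t)^{-1-sp}\,\d t$; multiplying the two recovers $\cD_{s,p}$. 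The only delicate points here are that the integrand is merely conditionally integrable near $y=x$ (controlled by the antisymmetry of $\omega(x)-\omega(y)$ to leading order) and that $sp\neq1$ is needed for integrability near $\{y_d=0\}$.

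The second step is the ground state substitution. Writing $u=\omega f$ with $f\in C_c^\infty(\R_+^d)$, the nonlinear ground state representation of Frank--Seiringer — which rests on a pointwise convexity inequality for $(a,b)\mapsto|a-b|^p$ applied to $a=\omega(x)f(x)$, $b=\omega(y)f(y)$, combined with the identity of Step~1 — gives
\begin{equation*}
	\int\limits_{\R_+^d\times\R_+^d}\frac{|u(x)-u(y)|^p}{|x-y|^{d+sp}}\,\d(x,y)\;\ge\;\mathcal{R}[f]\;+\;\cD_{s,p}\int\limits_{\R_+^d}\frac{|u(x)|^p}{x_d^{sp}}\,\d x,
\end{equation*}
where $\mathcal{R}[f]\ge0$ is a non-negative remainder functional built from the differences $f(x)-f(y)$ weighted by $\omega(x)\omega(y)$ (for $p=2$ this is a genuine identity with $\mathcal{R}[f]=\iint|f(x)-f(y)|^2|x-y|^{-d-2s}\omega(x)\omega(y)\,\d(x,y)$, and for $p\neq2$ the Frank--Seiringer convexity lemma is what produces the one-sided inequality). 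Discarding $\mathcal{R}[f]\ge0$ yields the theorem. Sharpness of $\cD_{s,p}$ follows by testing with $f$ a sequence of smooth cutoffs increasing to $1$, so that $u\to\omega$: then $\mathcal{R}[f]\to0$ while the two surviving terms are comparable divergent integrals, forcing the constant to be optimal.

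Finally, the case $p=1$, $d=1$ must be treated separately, since there the convexity inequality of Step~2 degenerates to an equality and no non-negative remainder survives. Using the coarea/layer-cake identity $|a-b|=\int_\R|\1_{a>t}-\1_{b>t}|\,\d t$ one reduces (for $u\ge0$) to $u=\1_E$, $E\subset(0,\infty)$ measurable, and a rearrangement argument shows that the quotient $\iint|\1_E(x)-\1_E(y)|\,|x-y|^{-1-s}\,\d(x,y)\big/\int_E x^{-s}\,\d x$ is minimized exactly for $E=(0,a)$, i.e. when $u$ is proportional to a non-increasing function; evaluating on such $u$ gives the value $\cD_{s,1}$ and exhibits failure otherwise. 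The main obstacle in the whole argument is the precise statement and verification of the Frank--Seiringer pointwise convexity inequality that upgrades the $p=2$ identity to a sharp one-sided inequality for all $p\ge1$; by contrast the scaling reduction and the $\Gamma$-function bookkeeping of Step~1 are routine once the substitution $y=x+x_dz$ is in place.
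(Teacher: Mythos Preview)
The paper does not give its own proof of this statement; it is quoted in the appendix as a result from the cited references \cite{FrSe10,DyKi22} and used as a black box in the proof of \autoref{th:Hardy}. Your outline reproduces precisely the ground state representation method of Frank--Seiringer from \cite{FrSe10}, so in that sense it coincides with the proof in the cited source; the sketch is sound as a plan, though in a full write-up you would still need to make the principal-value cancellations near the diagonal and near $\{y_d=0\}$ explicit and track the factor of $2$ in $\cD_{s,p}$ through the folding $r\mapsto 1/r$.
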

	
	\begin{lemma}\label{lem:constant}
		There exists a constant $C=C(d)\ge 1$ such that for all $0<s<1$ 
		\begin{equation*}
		C^{-1}\le s \, \cD_{s,1}\le C
		\end{equation*}
		where $\cD_{s,1}$ is the constant defined in \eqref{eq;constant}
	\end{lemma}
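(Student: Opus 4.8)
The plan is to analyze the constant
\[
\cD_{s,1}=2\pi^{(d-1)/2}\,\frac{\Gamma(1)}{\Gamma(\tfrac{d+s}{2})}\int_0^1\frac{\abs{1-t^{-(1-s)}}}{(1-t)^{1+s}}\,\d t
\]
and show that $s\,\cD_{s,1}$ stays between two positive constants depending only on $d$ for all $s\in(0,1)$. Since $\Gamma(1)=1$ and $\Gamma(\tfrac{d+s}{2})$ is continuous and bounded away from $0$ and $\infty$ for $s\in[0,1]$ and fixed $d$, the prefactor $2\pi^{(d-1)/2}/\Gamma(\tfrac{d+s}{2})$ is comparable to a constant $C(d)$; hence everything reduces to proving that $s\,I(s)\asymp 1$ where
\[
I(s):=\int_0^1\frac{t^{-(1-s)}-1}{(1-t)^{1+s}}\,\d t
\]
(note $t^{-(1-s)}\ge 1$ on $(0,1)$, so $I(s)>0$, matching positivity of $\cD_{s,1}$).

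First I would split $I(s)=\int_0^{1/2}+\int_{1/2}^1$. On $[1/2,1]$ the factor $(1-t)^{-1-s}$ carries the singularity; writing $t^{-(1-s)}-1=\int_t^1(1-s)u^{-(2-s)}\,\d u \le (1-s)\,t^{-(2-s)}(1-t)\le 2^{2-s}(1-s)(1-t)$ on $[1/2,1]$, the integrand is bounded by $2^{2-s}(1-s)(1-t)^{-s}$, whose integral over $[1/2,1]$ is $O(1)$ uniformly in $s$; this controls the upper bound for that piece. On $[0,1/2]$ we have $(1-t)^{-1-s}\in[1,2^{1+s}]$, so that piece is comparable to $\int_0^{1/2}(t^{-(1-s)}-1)\,\d t=\big[\tfrac{t^{s}}{s}-t\big]_0^{1/2}=\tfrac{2^{-s}}{s}-\tfrac12$, which is $\asymp 1/s$ for small $s$ and $O(1)$ for $s$ bounded away from $0$. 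Thus $I(s)\le C(1+1/s)$, giving $s\,I(s)\le C$. For the lower bound, I would use just the $[0,1/2]$ piece: $I(s)\ge \int_0^{1/2}(t^{-(1-s)}-1)\,\d t = \tfrac{2^{-s}}{s}-\tfrac12 \ge \tfrac{c}{s}$ for some absolute $c>0$ (for $s\le 1$, $2^{-s}\ge 1/2$, so this is $\ge \tfrac{1}{2s}-\tfrac12\ge \tfrac{1}{4s}$ once $s\le 1$), hence $s\,I(s)\ge c>0$. Combining with the bounds on the prefactor yields $C(d)^{-1}\le s\,\cD_{s,1}\le C(d)$.

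The only genuinely delicate point is making the estimate uniform as $s\to 0^+$ and as $s\to 1^-$ simultaneously — near $s=0$ the integral $I(s)$ blows up like $1/s$ (this is exactly why one multiplies by $s$), while near $s=1$ one must check the $(1-t)^{-1-s}$ singularity is still integrable after using $t^{-(1-s)}-1=O(1-t)$ near $t=1$. Both are handled by the elementary bound $t^{-(1-s)}-1\le (1-s)t^{-(2-s)}(1-t)$ for the upper endpoint and the explicit antiderivative on $[0,1/2]$ for the lower endpoint; there is no real obstacle beyond bookkeeping. An alternative, slicker route is to substitute $t=e^{-\tau}$ and recognize $I(s)$ as (a piece of) a Beta-type integral expressible via Gamma functions, then use Stirling/reflection formulas, but the direct splitting above is shorter and entirely self-contained.
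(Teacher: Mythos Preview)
Your upper bound is fine and follows the same split-at-$1/2$ strategy as the paper. The lower bound, however, has a gap near $s=1$. You argue $I(s)\ge \int_0^{1/2}(t^{-(1-s)}-1)\,\d t=\tfrac{2^{-s}}{s}-\tfrac12$ and then claim this is $\ge \tfrac{1}{4s}$ via $\tfrac{1}{2s}-\tfrac12\ge\tfrac{1}{4s}$; but the latter inequality is equivalent to $s\le 1/2$, and indeed $\tfrac{2^{-s}}{s}-\tfrac12\to 0$ as $s\to 1^-$. So the $[0,1/2]$ piece alone cannot give a uniform positive lower bound for $s\,I(s)$ on all of $(0,1)$.

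The fix --- which is precisely what the paper does --- is to also lower-bound the $[1/2,1]$ piece. From your own identity $t^{-(1-s)}-1=(1-s)\int_t^1 u^{s-2}\,\d u\ge (1-s)(1-t)$ on $[1/2,1]$ (since $u^{s-2}\ge 1$ there), one gets
\[
\int_{1/2}^1\frac{t^{-(1-s)}-1}{(1-t)^{1+s}}\,\d t\ \ge\ (1-s)\int_{1/2}^1(1-t)^{-s}\,\d t\ =\ (1/2)^{1-s}\ \ge\ \tfrac12.
\]
Adding this to your $[0,1/2]$ estimate yields $I(s)\ge \tfrac{2^{-s}}{s}$, hence $s\,I(s)\ge 2^{-s}\ge 1/2$, which is the required uniform bound. (Minor aside: for $p=1$ the numerator of the Gamma prefactor is $\Gamma(\tfrac{1+s}{2})$, not $\Gamma(1)$; this does not affect your argument since it is still bounded above and below on $[0,1]$.)
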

	
	\begin{proof}
		We split the integral in \eqref{eq;constant} into two parts. 
		\begin{align*}
		\int\limits_0^{1/2} \frac{\abs{1-t^{s-1}}}{(1-t)^{1+s}}\d t\le 2^{1+s} \int\limits_0^{1/2}t^{s-1}\d t \le 4\frac{(1/2)^s}{s}\le \frac{4}{s}.
		\end{align*}
		An lower bound in the same term is calculated similarly:
		\begin{align*}
		\int\limits_0^{1/2} \frac{\abs{1-t^{s-1}}}{(1-t)^{1+s}}\d t\ge (1-(1/2)^{1-s}) \int\limits_0^{1/2} t^{s-1}\d t=\frac{2^{1-s}-1}{2s}\ge \frac{1-s}{4s}
		\end{align*}
		We move to the remaining integral. 
		\begin{equation*}
		\int\limits_{1/2}^{1} \frac{\abs{1-t^{s-1}}}{(1-t)^{1+s}}\d t \le2^{1-s}\int\limits_{1/2}^{1} \frac{1}{(1-t)^{1+s}}\big( (1-s)\int\limits_{t}^1 r^{-s} \d r \big)\d t \le 2\int\limits_{1/2}^{1} \frac{1-s}{(1-t)^{s}}\d t \le 2.
		\end{equation*}
		And an lower bound is calculated in a similar fashion.
		\begin{equation*}
		\int\limits_{1/2}^{1} \frac{\abs{1-t^{s-1}}}{(1-t)^{1+s}}\d t \ge\int\limits_{1/2}^{1} \frac{1}{(1-t)^{1+s}}\big( (1-s)\int\limits_{t}^1 r^{-s} \d r \big)\d t \ge \int\limits_{1/2}^{1} \frac{1-s}{(1-t)^{s}}\d t = (1/2)^{1-s}\ge 1/2
		\end{equation*}
		Therefore, 
		\begin{equation*}
		2\pi^{(d-1)/2} \frac{1}{\Gamma(d/2)\vee \Gamma((d+1)/2)} \frac{1}{4s}\le \cD_{s,1}\le 2\pi^{(d-1)/2} \frac{\Gamma(1/2)}{\Gamma(d/2)\wedge \Gamma((d+1)/2)}\frac{6}{s}.
		\end{equation*}
	\end{proof}
	
	\medskip
	

\newcommand{\etalchar}[1]{$^{#1}$}

\end{document}